\newtheorem{theorem}{Theorem}[section]
\newtheorem{definition}[theorem]{Definition}
\newtheorem{lemma}[theorem]{Lemma}
\newtheorem{cor}[theorem]{Corollary}
\newtheorem{rem}[theorem]{Remark}
\newtheorem{proposition}[theorem]{Proposition}
\newtheorem{ex}[theorem]{Example}
\newcommand{\relint}{{\mathrm{relint}}\,}
\newcommand{\cl}{{\mathrm{cl}}\,}
\newcommand{\diam}{{\mathrm{diam}}\,}
\newcommand{\vare}{\varepsilon}
\newcommand{\R}{{\Bbb R}}
\def\j1n{j=1,\dots,n}
\def\j1m{j=1,\dots,m}
\def\i1np1{\in +1}
\def\R{\mathbb{R}}
\def\rn{\mathbb{R}^n}
\def\i1np1{\in +1}
\def\R{\mathbb{R}}
\def\rn{\mathbb{R}^n}
\def\u1{u^{(1)}}
\def\h1{h^{(1)}}
\newcommand{\la}{\lambda}
\newcommand{\ga} {\gamma}
\newcommand{\Ga} {\Gamma}
\newcommand{\om}{\omega}
\newcommand{\Om} {\Omega}
\newcommand{\RR}{{\mathbb R}}
\newcommand{\NN}{{\mathbb N}}
\newcommand{\pa}{\partial}
\newcommand{\lims} {\lim\limits}
\newcommand{\dist} {\mbox{\rm dist }}
\newcommand{\mis}{\mbox{\rm mis }}
\begin{document}

\begin{title}{On   steepest descent curves for quasi convex families in $\RR^n$}
\end{title}

\author{Marco Longinetti\footnote{marco.longinetti@unifi.it, Dipartimento DiMaI,
Universit\`a degli Studi di Firenze, V.le Morgagni 67,  50134
Firenze - Italy}\\
Paolo Manselli\footnote{paolo.manselli@unifi.it, 
Firenze - Italy}\\
Adriana Venturi\footnote{adriana.venturi@unifi.it, Dipartimento
GESAAF, Universit\`a degli Studi di Firenze, P.le delle Cascine 15,
50144 Firenze - Italy}}
\date{}
\maketitle

\begin{small}{\bf Abstract. }A connected, linearly ordered path $ \ga\subset \rn $
satisfying
$$x_1, x_2,
x_3 \in \ga, \; \mbox{ and } \; x_1 \prec x_2 \prec x_3 \quad
\Longrightarrow \quad  |x_2-x_1| \leq |x_3-x_1|
$$
 is shown to be a rectifiable curve; a priori bounds for its length are
 given; moreover, these paths are generalized  steepest descent curves 
of  suitable quasi convex functions. Properties of quasi convex families  are  considered; special curves
related to  quasi convex families are defined and studied; they
are generalizations of steepest descent curves for quasi convex functions and satisfy the
previous property. Existence, uniqueness, stability results and
length's bounds are proved for them.

\bigskip

{\bf R\'esum\'e.} Nous d\'emontrons que les chemins  $ \ga\subset \rn $ qui sont connect\'es et ordonn\'es, avec la  propriet\'e de monotonicit\'e $$x_1, x_2,
x_3 \in \ga, \; \mbox{ et } \; x_1 \prec x_2 \prec x_3 \quad
\Longrightarrow \quad  |x_2-x_1| \leq |x_3-x_1|
$$ sont des courbes. Des limitations pour leur longueur sont prouv\'e.
Ces chemins  sont des  g\'en\'eralisations de courbes de la plus grande pente pour appropri\'ees fonctions quasiconvexes.
Propri\'et\'es des familles quasiconvexes et courbes li\'ees avec elles sont \'etudi\'ees.  Nous d\'emontrons l'existence,
l'unicit\'e, la d\'ependance continue de ces courbes avec des limitations pour leur longueur.   \end{small}
\newline
\smallskip

2000 \emph{Mathematics Subject Classifications. Primary }52A20; \emph{ Secondary} 52A10, 52A38, 49J53.

\emph{Key words and phrases.} Quasiconvex functions, steepest descent curves.

\section{Introduction}
Given a smooth, real valued function in a domain $ A \subset \rn$,
$f : A \to \RR $, with gradient $ Df \ne 0, $  steepest descent
curves are solutions to
\begin{equation}\label{gradientequation}
\dot{x}(t)=\phi(Df(x(t))) Df(x(t)), \quad \phi > 0
\end{equation}
(sometimes the condition $ \phi < 0 $ is preferred). The steepest
descent curves satisfy the geometrical fact to be
orthogonal to the level sets $ \{x:f(x)= \mbox{const}\} $. This
fact shows that the trajectories of the  steepest descent curves depend on
the level sets of $ f $ only.

The definition of steepest descent curves has been extended to more
general functions and spaces in \cite{Brezis}, \cite{Degio-Marino},
\cite{Degio-Marino2}, \cite{Degio-Marino3}, \cite{Bolte},
  \cite{Marcellin}.

Here generalized steepest descent curves will be defined and studied for quasi convex
  functions without smoothness assumptions; let us refer to \cite{Crouzeix} for properties of these functions.
In particular they will be studied for lower semi continuous  bounded functions $f$
  with compact convex sub level sets:
$$
\Om_\tau=\{x :f(x)\leq \tau \} \quad \inf f  \leq \tau \leq
\sup f .
$$

Let $ \{ \Om_{\tau} \} $ be a nested family of bounded compact
convex sets, sub level sets of a bounded, lower semi continuous,
quasi convex function; this family, according to \cite{fenc}, will
be called a \textbf{ quasi convex family}. Let us assume here, for
simplicity, that  $ \mathit{Int}( \Om_{\tau}) $, the set of its interior
points, is not empty. In the work it can happen that  $Int(Q)=\emptyset$ for some $Q$ in the family.  
The condition for a curve $ x(\cdot)$ to be a steepest
descent curve for a quasi convex family will be that, 
\begin{equation}\label{steepdesceqnconvfam}
  x(\tau) \in
\partial \Om_{\tau} \quad \forall  \tau\quad \mbox{and} \quad\dot{x}(\tau) \in N_{\Om_{\tau}} \quad \mbox{a.e.} ,\end{equation}
 where  $ N_{\Om_{\tau}} $ is the
normal cone to $ \Om_{\tau} $ at $ x(\tau);$ i.e. $x(\cdot)$ is a time-dependent  trajectory of   a
differential inclusion, see e.g.  \cite[\S 4.4]{Aubin}.
Properties of steepest descent curves for quasi convex families were observed in
\cite{Manselli-Pucci}: rectifiables curves were studied that are
steepest descent curves for some  quasi convex family $\{\Om_{\tau}\}$ and
 bounds were proved for their length. It was noticed  that, for these curves,
the following property holds:
\begin{equation}\label{selfexp}
\mbox{if}\quad  t_1 < t_2 < t_3 \quad \mbox{then} \quad |x(t_1) -
x(t_2)| \leq |x(t_1) - x(t_3)|
\end{equation}
(with the opposite orientation a continuous curve satisfying  (\ref{selfexp}) was called
self contracting curve, see \cite{Daniilidis2}.
The authors \cite{Daniilidis} and \cite{Daniilidis2} studied them
  and proved, for $
n=2 $ bounds for their length).
If a parametrization $x(\cdot)$  for the curve $\ga$ is available, the following  extension of both  above conditions \eqref{steepdesceqnconvfam},\eqref{selfexp} will be
used in this work:   
\begin{equation}\label{exco}
\forall \Omega_{\tau},\; \forall y \in  \Omega_{\tau},\; \text{if}\quad x(\tau)
\notin Int (\Omega_{\tau}), \, \text{then}\quad\forall t> \tau: \quad
|x(\tau)-y| \leq |x(t)-y|.
\end{equation}
Sometimes, in the paper,  in place of the quasi convex family $ \{ \Om_{\tau} \} $, will be also considered a convex
stratification $\mathfrak{F}$ (definition \ref{defstratifications}). 
The class of convex stratifications  (see \cite{Defi}) is  more general than the class of quasi convex families. 
A convex stratification is not necessarily parameterized by a continuous parameter. 
Special curves $\ga$ associated to a convex stratification $\mathfrak{F}$ satisfying definition 
\ref{defEC+} (a generalization of (\ref{exco})) are considered and studied. 
The couple $(\ga,\mathfrak{F})$ is called \textbf{ Expanding
Couple} (EC).
In the present work, two  problems are addressed and solved.

First, properties of connected paths $\ga$ included in a convex body $\Om \subset \rn$
 and satisfying (\ref{selfexp})
only (that will be called \textbf{Self Expanding Paths} (SEP))  are
studied: the SEP turn out to be  rectifiable curves with a priori
bounded length, depending only on the dimension $n$ and on the mean
width of $\Om$ (theorem \ref{selfsecrettifiable}).

Second, properties of   convex stratifications $\mathfrak{F}$ and expanding couples  are  considered. For any expanding couple  $(\ga,\mathfrak{F})$ regularity properties of the 
 curve $\ga$, associated to  $\mathfrak{F}$,
are studied. Existence, uniqueness  and stability  results,
   (theorems \ref{coruniqcondEC},  \ref{uniqcondEC}) are proved for $(\ga,\mathfrak{F})$.
Moreover $\ga$ can be parameterized in a such way that its representation $x(\cdot)$ is a lipschitz continuous time-dependent  trajectory of  a differential inclusion of the type
(\ref{steepdesceqnconvfam}) (theorem \ref{abscont}). Also it is obtained that, if $(\ga,\mathfrak{F})$ is  an EC and  $ \Omega_{1} \subset \Omega_{2}\in  \mathfrak{F}$  then, the length of the part of $ \ga $ between $
\Omega_{1} $ and  $ \Omega_{2} $ satisfies the bound:
$$
\mbox{length}\, \big( \ga \cap ( \Omega_{2} \setminus
\Omega_{1} ) \big) \; \leq \; \mbox{const} \cdot \mbox{dist}
(\Omega_{1} , \Omega_{2} )
$$
with  Hausdorff distance and   constant depending  only on the dimension $n$ (see theorem \ref{uovo}).

 Results on the maximal length of a steepest descent curve for convex functions 
 were noticed as an important tool for studying them (see \cite{Bolte}); 
the previous inequality provides an apriori bound for their lengths.

 The main tools in our
approach are: first, the suitable parametrization of the self
 expanding paths with respect to the mean width of the convex hulls of the increasing
 parts of the curve; second,  the parametrization of the quasi convex families with respect
  to their  mean width.
The structure of the present work follows. In \S2 and \S3
preliminary facts are stated and properties on cap bodies and the
variation of their mean width are studied. In \S4 SEP are introduced
and regularity results for them are proved. The main result in theorem \ref{selfsecrettifiable} is that
a SEP,  with the mean width  parametrization, is Lipschitz continuous. It is shown that its length can
be bounded a priori. In \S5 quasi convex stratifications and quasi convex families with  their
properties are considered. In \S6 steepest descent curves and expanding couples  are defined and studied; existence and
uniqueness problems are stated and solved. Our approach about existence and regularity properties of steepest descent curves do not require 
that the time-dependent trajectory $x(\cdot)$ in \eqref{steepdesceqnconvfam} is absolutely continuous.

\section{Preliminaries and definitions}
Let
$$
B(z,\rho)=\{x\in\rn\,:\,|x-z|<\rho\}\,,\quad\,
S^{n-1}=\partial B(0,1) \, \quad n\geq 2.
$$

A nonempty,  compact convex set $K$ of $\rn$ will be called  a {\em
convex body}. $Int(K)$ and $\pa K$ denote the interior of $K$ and
the boundary of $K$, $cl(K)$  is    the closure of $K$,
$\mathit{Aff}(K)$ will be the smallest affine hull containing $K$,
and $\relint K$, $\pa_{rel} K$ are  the corresponding subsets in
the topology of $\mathit{Aff}(K)$.
 For any set $S$, $co(S)$ is the convex hull of $S$. $Lin^+ (S)$ is the smallest linear
  space containing $S\cup\{0\}$, $Lin^- (S)$ is the largest linear  space contained in
   $S\cup\{0\}$. $Lin^+, Lin^-$ operate in the vector space structure of  $\rn$.
For  a convex body $K\subset \rn$, the {\em support function} is
defined by

\begin{equation*}
\label{equation1} H_K(x)=\sup_{y\in K}\langle x,y\rangle\,,\quad
x\in\mathbb{R}^n,
\end{equation*}
where $\langle \cdot, \cdot \rangle$ denotes the  scalar
product in $\mathbb{R}^n$. The restriction of $H_K$ to $S^{n-1}$ will be  denoted by
$h_K$.

It is well known that, if $0\le \lambda \le 1$, $A$ and $B$ are
convex bodies,  so is $\lambda A+(1-\lambda)B$  and
\begin{equation}\label{sommaH}
H_{\lambda A+(1-\lambda)B}=\lambda H_A+(1-\lambda) H_B\,;
\end{equation}
let us  recall that $H$ is monotone with respect to inclusion, i.e.
\begin{equation}\label{monotoneH}
A\subseteq B\quad\text{if and only if}\quad H_A(x)\leq
H_B(x)\quad\forall x\in\rn\,.
\end{equation}
 The width of a convex set $K$  in a
direction $\theta \in S^{n-1}$ is the distance between the two
hyperplanes  orthogonal to $\theta$ and  supporting $K$,  given by
$h_K(\theta)+ h_K(-\theta)$. The {\em mean width } $w(K)$ of $K$ is
the mean of this distance on $S^{n-1}$ with respect to the spherical
Lebesgue measure $\sigma$, i.e.
\begin{equation}\label{meanwidthdef}
w(K)= \dfrac{1}{\omega_n}\int_{S^{n-1}} \left( h_K(\theta)+ h_K(-\theta)\right)\,
d\sigma= \dfrac{2}{\omega_n} \int_{S^{n-1}}  h_K(\theta)\, d\sigma
\end{equation}
where $\omega_n=2\pi^{n/2}/\Gamma(n/2)$ is the  measure  of   $S^{n-1}$.

\begin{rem}
If  $K$ is a convex body in $\rn$  and $k=\dim \mathit{Aff}(K)< n,
$ there are competing mean widths for $K$: $w_k(K)$ the mean width
of $K$ as subset of  $\mathit{Aff}(K)$, $w_n(K)$ the mean width of
 $K$ as subset of  $\rn$. Let us recall that
  $w_k(K)/w_n(K)=\frac{\omega_{k+1}}{\omega_{k}}\frac{\omega_n}{\omega_{n+1}}$ is a constant
  depending on $n$ and $k$ only.
\end{rem}
  In what follows $w(K)$ will always be $w_n(K)$.

Let $K$ be a convex body and $q\in  K$; the {\em normal cone } at
$q$ to $K$ is the closed convex cone

\begin{equation}\label{normalcone} N_K(q)=\{x\in\rn: \langle x,y-q\rangle \le 0 \quad
\forall y \in K\}.
\end{equation}
 When $q \in Int(K)$ then  $N_K(q)$ reduces to zero.

\begin{definition}
Let $K$ be a convex body  and $p$ be a point not in K. A  simple
cap body $K^p$ is:
\begin{equation}\label{capbody}
K^p=\bigcup_{0\le \lambda \le 1}\{ \la K+(1-\la)p\}=co(K\cup\{p\}).
\end{equation}
\end{definition}

Cap bodies properties can be found in \cite{Bonnfen}. For later use let us define also $K^p=K$ for $p\in K$.

\begin{proposition} \label{propsupportcapbody}Let  $K$  be  a convex body,
 $N_p=N_{K^p}(p)$ the normal cone to $K^p$ at $p$, then
\begin{equation}\label{supportcapbody}
H_{K^p}(x) = \left \{ \begin{array}{lc}
                     H_K(x)  &   for \quad x\not \in N_p, \\
                    \langle x, p \rangle & for \quad x \in N_p.
                   \end{array}
                     \right.
\end{equation}
\end{proposition}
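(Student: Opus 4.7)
The plan is to first establish the pointwise formula $H_{K^p}(x) = \max(H_K(x), \langle x, p \rangle)$ using the fact that $K^p = \mathrm{co}(K \cup \{p\})$, and then use the defining property of the normal cone $N_p$ to determine which of the two quantities in the maximum is attained.

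First I would show $H_{K^p}(x) = \max(H_K(x), \langle x, p \rangle)$. The inequality $\geq$ is immediate because $K \subseteq K^p$ gives $H_K \leq H_{K^p}$ by \eqref{monotoneH}, and $p \in K^p$ gives $\langle x, p \rangle \leq H_{K^p}(x)$. For the converse, any $z \in K^p$ can be written as $z = \lambda y + (1-\lambda) p$ with $y \in K$ and $\lambda \in [0,1]$, and by linearity $\langle x, z \rangle = \lambda \langle x, y \rangle + (1-\lambda) \langle x, p \rangle \leq \max(H_K(x), \langle x, p \rangle)$; taking the supremum over $z$ gives the reverse inequality.

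Next, I would use the characterization of $N_p$: by \eqref{normalcone}, $x \in N_p$ means $\langle x, y - p \rangle \leq 0$ for every $y \in K^p$, equivalently $H_{K^p}(x) \leq \langle x, p \rangle$. Combined with $p \in K^p$, this is the same as $H_{K^p}(x) = \langle x, p \rangle$. Therefore, if $x \in N_p$, the maximum established above must be realized by $\langle x, p \rangle$, giving the second line of \eqref{supportcapbody}.

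Finally, for $x \notin N_p$, there exists $y \in K^p$ with $\langle x, y \rangle > \langle x, p \rangle$. Writing $y = \lambda z + (1-\lambda) p$ with $z \in K$ forces $\lambda > 0$ and $\langle x, z \rangle > \langle x, p \rangle$, so $H_K(x) \geq \langle x, z \rangle > \langle x, p \rangle$. Hence the maximum in the first step is attained by $H_K(x)$, yielding the first line of \eqref{supportcapbody}. The argument is essentially a two-line verification once one notices that $H_{K^p}$ is the pointwise maximum of two linear-like quantities; the only mildly careful step is translating ``$x \notin N_p$'' into a strict inequality that eliminates the $\langle x, p \rangle$ branch, so there is no real obstacle.
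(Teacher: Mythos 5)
Your proof is correct and takes essentially the same route as the paper: both arguments rest on writing points of $K^p=co(K\cup\{p\})$ as convex combinations $\lambda z+(1-\lambda)p$ with $z\in K$ and on the definition \eqref{normalcone} of the normal cone, you merely make explicit the identity $H_{K^p}(x)=\max\{H_K(x),\langle x,p\rangle\}$ that the paper uses implicitly in its case analysis. No gaps; the translation of $x\notin N_p$ into the strict inequality $H_K(x)>\langle x,p\rangle$ is exactly the step the paper performs as well.
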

\begin{proof}
Let $x \in N_p $ , by  (\ref{normalcone})
$$
\langle x, z \rangle \le \langle x, p \,\rangle \quad \forall z\in
K^p,$$
 so $H_{K^p}(x)\le \langle x, p \,\rangle .$ Since $K^p\supset
P=\{p\}$, monotone property \eqref{monotoneH} implies
\begin{equation}\label{HKp>HK}
H_{K^p}(x)\ge H_P(x)= \langle x, p \,\rangle \quad \forall x\in \rn,
\end{equation}
then, last part of \eqref{supportcapbody} holds. If $ x\not \in
N_p$, by (\ref{normalcone}) there exist  $\overline{z}\in K$ and $
\overline{\la}\in (0,1]$, satisfying
$$
\langle x, [\overline{\la}\overline{z}+(1-\overline{\la})p]-p
\rangle
>0,
$$
i.e. $\langle x ,\overline{z}-p\rangle >0$. Then
$$
 H_K(x) > \langle x, p \rangle.
$$
Let $z_1\in K, \la_1\in [0,1]$ satisfying: $H_{K^p}(x)=\langle x,
\la_1z_1+(1-\la_1)p \rangle.$ Then
$$
H_{K^p}(x)\leq \max \{ \langle x,z_1 \rangle , \langle p, x \rangle
\} \leq H_K(x).
$$
As $K^p \supset K$ , $H_{K^p}(x) \geq H_K(x)$ and the first equality
in \eqref{supportcapbody} holds.\end{proof}
Let us notice that in previous proposition we dont assume that $p\not \in  K$,
 but later it will be used  in that case.

The {\em dual cone} $C^*$ of a convex cone $C$ is
$$C^*=\{y\in \rn : \langle y,x \rangle \ge 0 \quad \forall x \in C\}.$$
The dual cone $C^*$ is a closed and convex cone.

In \cite[theorem 5]{fenc}, it is observed that for a convex  cone $C$
$$\dim Lin^+ (C) + \dim Lin^- (C)= n. $$
The opening of a circular cone will be  the amplitude of the acute angle between the
axis and a generator half line. If $C$ is a circular cone of opening $\alpha$ then $C^*$
is a circular cone of opening $\pi/2-\alpha$.
The  {\em tangent cone}, or  support cone, of a convex body  K at a
point $q \in \pa K$  is given by
$$ T_K(q)=cl \{\bigcup_{y\in K} \{s( y-q): s \geq 0  \}\}.$$
It is well known that:
\begin{equation}\label{N=-T*}
N_K(q)=-T^*_K(q).
\end{equation}
Moreover:
\begin{enumerate}
\item[(i)] $\dim \mathit{Aff}(T_K(q))=\dim \mathit{Aff}(K)$;
\item[(ii)] $\dim Lin^-(N_K(q))=n-\dim \mathit{Aff}(K)$;
\item[(iii)] if $p\in \mathit{Aff} (K)$ then $\dim \mathit{Aff}(K^p)= \dim \mathit{Aff}(K)$;\\
 if $p\not\in \mathit{Aff} (K)$ then $\dim \mathit{Aff}(K^p)= \dim \mathit{Aff}(K)+1$.
\end{enumerate}

\subsection{Sequences of cap bodies and their normal cones}\label{simplecapbody}

\begin{proposition}\label{limvarepsilonNp} For $ \varepsilon >0$ let $p_{\varepsilon}\not \in K$ and 
let $\lim_{\varepsilon \to 0^+}p_{\varepsilon}=p_0\in \pa K$. 
Then
\begin{equation}\label{limsupNp}
\limsup_{\varepsilon \to 0^+} N_{K^{p_\varepsilon}}(p_\varepsilon)\subseteq N_{K}(p_0).
\end{equation}
\end{proposition}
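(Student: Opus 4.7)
The plan is to unpack the Kuratowski $\limsup$ directly and pass the defining inequality of the normal cone to the limit, using only that $K \subseteq K^{p_\varepsilon}$ for every $\varepsilon$.

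First, I would take an arbitrary $x \in \limsup_{\varepsilon \to 0^+} N_{K^{p_\varepsilon}}(p_\varepsilon)$. By definition of upper Kuratowski limit of sets, this gives a sequence $\varepsilon_k \to 0^+$ and vectors $x_k \in N_{K^{p_{\varepsilon_k}}}(p_{\varepsilon_k})$ such that $x_k \to x$ in $\rn$. Writing out \eqref{normalcone} at $p_{\varepsilon_k}$ with respect to the cap body $K^{p_{\varepsilon_k}}$, we have
\[
\langle x_k,\, z - p_{\varepsilon_k}\rangle \le 0 \qquad \forall z \in K^{p_{\varepsilon_k}}.
\]
Since $K \subseteq K^{p_{\varepsilon_k}}$ (by \eqref{capbody}, or just because $K \subseteq \co(K\cup\{p_{\varepsilon_k}\})$), this inequality holds, in particular, for every fixed $z \in K$.

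Next, I would fix $z \in K$ and let $k \to \infty$. Using $p_{\varepsilon_k} \to p_0$ by hypothesis and $x_k \to x$, the continuity of the scalar product yields
\[
\langle x,\, z - p_0\rangle \le 0 \qquad \forall z \in K.
\]
Since $p_0 \in \partial K \subseteq K$, this is exactly the condition $x \in N_K(p_0)$ from \eqref{normalcone}, which proves the claimed inclusion.

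The argument is essentially routine; there is no real obstacle. The only subtlety worth noting is that one must test the normality condition against points of $K$ (not against the whole cap body $K^{p_{\varepsilon_k}}$, which varies with $k$), and this is precisely what the inclusion $K \subseteq K^{p_\varepsilon}$ permits. No assumption like $p_0 \in \partial K$ being regular is needed; lower semicontinuity of the normal cone along a convergent sequence of base points into a fixed convex body $K$ is what is really being used.
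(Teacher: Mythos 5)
Your proposal is correct and follows essentially the same route as the paper: extract a convergent sequence $x_k \in N_{K^{p_{\varepsilon_k}}}(p_{\varepsilon_k})$, restrict the normal-cone inequality to the fixed set $K \subseteq K^{p_{\varepsilon_k}}$, and pass to the limit using $x_k \to x$ and $p_{\varepsilon_k} \to p_0$. No gaps.
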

\begin{proof}
The  proposition is a consequence of standard results in the theory of convex analysis,
however the proof is elementary, arguing by converging sequences. Let $\varepsilon_r \to 0^+$
for $r \to \infty$ and let for simplicity  $p_r=p_{\varepsilon_r}$, $N_r=N_{K^{p_{r}}}(p_r)$,
for $r \in \NN$. Let $\{x_r\}_{r \in \NN}$ be a converging sequence of points in $N_r$ and
 $x_0=\lims_{r\to \infty} x_r$. By construction
\begin{equation}\label{ineqpr}
 \langle x_r,y \rangle \le \langle x_r, p_r \rangle \quad \forall y \in K^{p_r},
 \forall r \in \NN.
\end{equation}
Since $K\subset K^{p_r}$, the previous inequality holds for all $y \in K$; going to the
 limit, one gets
$$ \langle  x_0 ,y\rangle \le \langle x_0, p_0 \rangle \quad \forall y \in K.$$
This proves that $x_0\in N_{K}(p_0)$.
\end{proof}
Let $u\neq 0, u\in \rn$. Let us consider  the half space
$\{u\}^*=\{x\in\rn: \langle  x, u \rangle \ge 0\},$
 i.e. the  dual cone of the half line starting from the origin through $u$.
\begin{proposition}\label{inclusionNpvarepsilon}
Let  $\varepsilon > 0 $, $K$ be a convex body and $p_0\in \pa K$. Let $u\not \in T_K(p_0)$
 and let  $p_{\varepsilon}=p_0+\varepsilon u$.  Then
$$ \{u\}^*\supset N_{K^{p_\varepsilon}}(p_\varepsilon)\supseteq \, \, N_{K}(p_0)\cap \{u\}^*.$$
\end{proposition}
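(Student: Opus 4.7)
The plan is to verify both inclusions by direct computation from the definition of the normal cone \eqref{normalcone} together with the representation $K^{p_\varepsilon}=\co(K\cup\{p_\varepsilon\})$ given in \eqref{capbody}. The hypothesis $u\notin T_K(p_0)$ guarantees $p_\varepsilon \notin K$: otherwise $u=\varepsilon^{-1}(p_\varepsilon-p_0)$ would belong to $T_K(p_0)$ by the definition of the tangent cone, contradicting the assumption. So the cap body $K^{p_\varepsilon}$ is nontrivial and $p_\varepsilon$ is a genuine apex.

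For the first inclusion $N_{K^{p_\varepsilon}}(p_\varepsilon)\subseteq \{u\}^*$, I would pick $x\in N_{K^{p_\varepsilon}}(p_\varepsilon)$ and test the defining inequality at $y=p_0\in K\subset K^{p_\varepsilon}$, obtaining
$$\langle x,p_0-p_\varepsilon\rangle=-\varepsilon\langle x,u\rangle\le 0,$$
hence $\langle x,u\rangle\ge 0$, i.e.\ $x\in\{u\}^*$.

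For the second inclusion, let $x\in N_K(p_0)\cap\{u\}^*$. A generic point of $K^{p_\varepsilon}$ has the form $\lambda z+(1-\lambda)p_\varepsilon$ with $z\in K$, $\lambda\in[0,1]$. I would then compute
$$\langle x,\lambda z+(1-\lambda)p_\varepsilon-p_\varepsilon\rangle=\lambda\bigl(\langle x,z-p_0\rangle-\varepsilon\langle x,u\rangle\bigr),$$
and observe that the first term in the bracket is $\le 0$ because $x\in N_K(p_0)$, while the second contributes a nonpositive amount because $x\in\{u\}^*$ and $\varepsilon>0$. Since this holds for every $z\in K$ and $\lambda\in[0,1]$, by \eqref{normalcone} $x\in N_{K^{p_\varepsilon}}(p_\varepsilon)$.

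There is essentially no deep obstacle: the statement is an immediate consequence of the two defining inequalities once one writes points of the cap body as convex combinations and decomposes $z-p_\varepsilon=(z-p_0)-\varepsilon u$. The only thing to be careful about is to use both constraints in $N_K(p_0)\cap\{u\}^*$ simultaneously in the above splitting.
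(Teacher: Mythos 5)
Your proof is correct and follows essentially the same route as the paper: both inclusions are checked directly from the definition \eqref{normalcone}, writing points of $K^{p_\varepsilon}$ as convex combinations $\lambda z+(1-\lambda)p_\varepsilon$ and splitting $z-p_\varepsilon=(z-p_0)-\varepsilon u$, with the first inclusion obtained by testing against $p_0$ (equivalently, taking $q=p_0$, $\lambda=1$ in the paper's inequality). Your preliminary observation that $u\notin T_K(p_0)$ forces $p_\varepsilon\notin K$ also matches the paper's opening remark.
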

\begin{proof}As $u\not \in T_K(p_0)$, then $p_\varepsilon\not \in K$  and  $K^{p_\varepsilon}$
is a simple cap body.
Let $x \in N_{K^{p_\vare}}(p_\vare)$, this means
$$ \langle x, \la q+(1-\la)p_\vare -p_{\vare} \rangle \leq 0, \quad \forall q\in K,
\la \in [0,1],$$
i.e.
\begin{equation}\label{dpropu*}
 \langle x, \la (q-p_{0}-\vare u )\rangle \leq 0.
\end{equation}
Then $ \langle x, u \rangle \geq 0$ so $x \in \{u\}^*$. If  $x \in
N_{K}(p_0)$, then $ \langle x, q-p_0 \rangle \leq 0  \quad\forall q
\in K$. If also  $x \in \{u\}^*,$ thus for every $\la \geq 0,$
\eqref{dpropu*} is satisfied and
 $ x \in N_{K^{p_\vare}}(p_\vare)$.
\end{proof}

\begin{theorem}\label{limvarepsilonNpu} Let $K$ be a convex body and $p_0\in \pa K$. Let
 $u\not\in T_K(p_0), u\neq 0$ and let  $p_{\varepsilon}=p_0+\varepsilon u$.  Then
 $p_{\varepsilon}\not \in  K$ and
\begin{equation}\label{limsupNpu}
\lim_{\varepsilon \to 0^+}N_{K^{p_\varepsilon}}(p_\varepsilon)= N_{K}(p_0)\cap \{u\}^*.
\end{equation}
\end{theorem}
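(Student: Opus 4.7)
The plan is to combine the two preceding propositions, which together squeeze $N_{K^{p_\varepsilon}}(p_\varepsilon)$ into the claimed set from both sides.

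First, I would verify the easy claim that $p_\varepsilon\notin K$ for every $\varepsilon>0$. If, for contradiction, $p_\varepsilon=p_0+\varepsilon u\in K$ for some $\varepsilon>0$, then taking $q=p_\varepsilon\in K$ and $s=1/\varepsilon>0$ one would have $u=s(q-p_0)\in T_K(p_0)$ by the definition of the tangent cone, contradicting the hypothesis $u\notin T_K(p_0)$. Hence $K^{p_\varepsilon}$ is a genuine simple cap body and Proposition \ref{inclusionNpvarepsilon} applies for every $\varepsilon>0$.

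Next, I would establish the inclusion $\limsup_{\varepsilon\to 0^+}N_{K^{p_\varepsilon}}(p_\varepsilon)\subseteq N_K(p_0)\cap\{u\}^*$. The inclusion into $N_K(p_0)$ is exactly Proposition \ref{limvarepsilonNp}, since $p_\varepsilon\to p_0\in\partial K$. The inclusion into $\{u\}^*$ is uniform in $\varepsilon$: Proposition \ref{inclusionNpvarepsilon} gives $N_{K^{p_\varepsilon}}(p_\varepsilon)\subseteq\{u\}^*$ for every $\varepsilon>0$, and since $\{u\}^*$ is closed this property passes to the $\limsup$. Intersecting the two inclusions yields the desired one.

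For the reverse direction, the key observation is that Proposition \ref{inclusionNpvarepsilon} also provides the lower bound $N_{K^{p_\varepsilon}}(p_\varepsilon)\supseteq N_K(p_0)\cap\{u\}^*$ uniformly for every $\varepsilon>0$. Consequently
\[
N_K(p_0)\cap\{u\}^*\;\subseteq\;\liminf_{\varepsilon\to 0^+}N_{K^{p_\varepsilon}}(p_\varepsilon).
\]
Combining the two inclusions one obtains
\[
\liminf_{\varepsilon\to 0^+}N_{K^{p_\varepsilon}}(p_\varepsilon)
\;=\;\limsup_{\varepsilon\to 0^+}N_{K^{p_\varepsilon}}(p_\varepsilon)
\;=\;N_K(p_0)\cap\{u\}^*,
\]
which is precisely the set-theoretic limit in \eqref{limsupNpu}. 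There is no real obstacle here: the work has already been done in the two preceding propositions, and the only subtlety is to check that $p_\varepsilon\notin K$ so that both propositions can legitimately be invoked; I would state this as a separate opening paragraph to make the logical structure transparent.
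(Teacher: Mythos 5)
Your proof is correct and follows essentially the same route as the paper: the upper inclusion for the $\limsup$ comes from Propositions \ref{limvarepsilonNp} and \ref{inclusionNpvarepsilon}, and the lower inclusion for the $\liminf$ comes from Proposition \ref{inclusionNpvarepsilon}, exactly as in the paper's argument. Your explicit check that $p_\varepsilon\notin K$ is a nice touch of completeness (the paper establishes this inside the proof of Proposition \ref{inclusionNpvarepsilon}), but it does not change the substance.
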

\begin{proof}

Propositions \ref{limvarepsilonNp}, \ref{inclusionNpvarepsilon} imply
\begin{equation}\label{limsupsubset}
\limsup_{\varepsilon \to 0^+} N_{K^{p_\varepsilon}}(p_\varepsilon)\subseteq N_{K}(p_0)
\cap \{u\}^*.
\end{equation}
 Proposition \ref{inclusionNpvarepsilon} implies also
$\liminf_{\varepsilon \to 0^+} N_{K^{p_\varepsilon}}(p_\varepsilon)\supseteq \, N_{K}(p_0)
\cap \{u\}^*$.
 \end{proof}

\section{First variation of the mean width of cap bodies}\label{meanwidth}
Let us recall that the Hausdorff distance  between two convex bodies $A$ and $B$ can be
written as
$$dist(A,B)= \max_{\theta \in S^{n-1}}|h_A(\theta)-h_B(\theta)|$$
(see \cite[theorem 1.8.11]{Schn}).

Let $K$ be a convex body, $p_0\in \pa K$. Let us  consider  the
variation of $K$  by deforming $K$ as a simple cap body in a given
direction $u \not\in T_K(p_0)$. More precisely let   $p_{\varepsilon
}=p_0+\varepsilon u$ as in proposition \ref{inclusionNpvarepsilon}
and $N(p_0)=N_{K^{p_0}}(p_0)$, $
N(p_\varepsilon)=N_{K^{p_\varepsilon}}(p_\varepsilon)$. When $N$ is
a cone let  $\widehat{N}$ be  the {\em sector} $N\cap S^{n-1}$.
\begin{theorem}\label{firstvariationwidth} Let $\varepsilon   > 0$,  $p_0\in \pa K$,
 $u\not\in T_K(p_0), u\neq 0$, $p_{\varepsilon }=p_0+\varepsilon u$, then
\begin{equation}\label{firstvariationwidtheq}
\frac{\omega_n}2\left( w(K^{p_\varepsilon})-w(K)\right)=\varepsilon
\int_{ \widehat{N(p_0)}\cap \{u\}^*}\langle  \theta , u \rangle \,\,
d\sigma(\theta) + \int_{\widehat{N(p_\varepsilon)}\setminus
(\widehat{N(p_0)} \cap\{u\}^*)}\left( h_{K^{p_\varepsilon}}(\theta
)-h_K(\theta )\right)d\sigma(\theta).
\end{equation}
The last integral of the right hand side
 is  positive and infinitesimum of order greater than one  for $\varepsilon \to 0^+$.
\end{theorem}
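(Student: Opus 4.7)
My plan is to start from the mean-width definition \eqref{meanwidthdef}, writing
$$\frac{\omega_n}{2}\bigl(w(K^{p_\varepsilon})-w(K)\bigr)=\int_{S^{n-1}}\bigl(h_{K^{p_\varepsilon}}(\theta)-h_K(\theta)\bigr)\,d\sigma(\theta),$$
and then invoking Proposition \ref{propsupportcapbody} to see that the integrand vanishes for $\theta\notin\widehat{N(p_\varepsilon)}$, whereas on $\widehat{N(p_\varepsilon)}$ it equals $\langle\theta,p_\varepsilon\rangle-h_K(\theta)$. Using Proposition \ref{inclusionNpvarepsilon}, I split the remaining domain as the disjoint union of $\widehat{N(p_0)}\cap\{u\}^*$ (which is contained in $\widehat{N(p_\varepsilon)}$) and its complement inside $\widehat{N(p_\varepsilon)}$.

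On the first piece I would use that $\theta\in N(p_0)=N_K(p_0)$ forces $h_K(\theta)=\langle\theta,p_0\rangle$, so that
$$h_{K^{p_\varepsilon}}(\theta)-h_K(\theta)=\langle\theta,p_\varepsilon-p_0\rangle=\varepsilon\langle\theta,u\rangle,$$
producing the first term on the right-hand side of \eqref{firstvariationwidtheq}. The second piece then gives the claimed ``error'' integral by definition, so the identity is settled. For nonnegativity (the meaning of ``positive'' here), for every $\theta\in\widehat{N(p_\varepsilon)}$ one has $h_{K^{p_\varepsilon}}(\theta)=\langle\theta,p_\varepsilon\rangle\ge h_K(\theta)$ thanks to $K\subset K^{p_\varepsilon}$ and \eqref{monotoneH}, so the integrand is nonnegative pointwise on the second region.

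It remains to show the second integral is $o(\varepsilon)$. Since $p_0\in K$, $h_K(\theta)\ge\langle\theta,p_0\rangle$, hence on the second region
$$0\le h_{K^{p_\varepsilon}}(\theta)-h_K(\theta)\le \langle\theta,p_\varepsilon-p_0\rangle=\varepsilon\langle\theta,u\rangle\le\varepsilon|u|,$$
so the integral is bounded by $\varepsilon|u|\cdot\sigma\bigl(\widehat{N(p_\varepsilon)}\setminus(\widehat{N(p_0)}\cap\{u\}^*)\bigr)$. The main obstacle is to show that this spherical measure vanishes as $\varepsilon\to0^+$. I would deduce it from Theorem \ref{limvarepsilonNpu}: the Kuratowski convergence $N(p_\varepsilon)\to N_K(p_0)\cap\{u\}^*$, together with the monotone inclusion in Proposition \ref{inclusionNpvarepsilon}, implies that any accumulation direction of the shrinking set $\widehat{N(p_\varepsilon)}\setminus(\widehat{N(p_0)}\cap\{u\}^*)$ lies in the relative boundary on $S^{n-1}$ of the convex spherical set $\widehat{N_K(p_0)\cap\{u\}^*}$, which has $\sigma$-measure zero. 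A standard outer-regularity argument (approximating the limit sector by slightly larger open sectors containing $\widehat{N(p_\varepsilon)}$ for all small $\varepsilon$) then yields $\sigma\bigl(\widehat{N(p_\varepsilon)}\setminus(\widehat{N(p_0)}\cap\{u\}^*)\bigr)\to 0$, which combined with the pointwise bound above gives the desired $o(\varepsilon)$ estimate and completes the proof.
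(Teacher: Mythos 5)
Your proposal is correct and follows essentially the same route as the paper: the same splitting of $\int_{S^{n-1}}(h_{K^{p_\varepsilon}}-h_K)\,d\sigma$ via Propositions \ref{propsupportcapbody} and \ref{inclusionNpvarepsilon}, the exact computation $\varepsilon\langle\theta,u\rangle$ on $\widehat{N(p_0)}\cap\{u\}^*$, and the use of Theorem \ref{limvarepsilonNpu} to make the leftover sector's $\sigma$-measure vanish. The only (harmless) differences are cosmetic: you bound the remaining integrand by $\varepsilon\langle\theta,u\rangle\le\varepsilon|u|$ using $h_K(\theta)\ge\langle\theta,p_0\rangle$ where the paper quotes the Hausdorff-distance bound $\dist(K^{p_\varepsilon},K)\le\varepsilon|u|$, and you spell out the compactness/outer-regularity step turning the cone convergence into convergence of spherical measures, which the paper leaves implicit.
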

\begin{proof} Let use formula \eqref{meanwidthdef} for the mean width.
The integral  of $(h_{K^{p_\varepsilon}}(\theta)-h_{K}(\theta))$ on $S^{n-1}$  can be
 split on three sets
$$\widehat{N(p_0)}\cap \{u\}^*, \quad \widehat{N(p_\varepsilon)}\setminus
(\widehat{N(p_0)} \cap \{u\}^*), \quad (S^{n-1}\setminus \widehat{N(p_\varepsilon)})
\cap \{u\}^*.$$
Since $K\subset K^{p_\varepsilon}$,
by proposition \ref{propsupportcapbody} we have
$$ h_{K^{p_\varepsilon}}(\theta)-h_K(\theta)=0 \text{\quad \quad for \quad}
\theta \not \in \widehat{N(p_\varepsilon)}  .$$
Moreover for $\theta \in \widehat{N(p_0)}\cap\{u\}^*$ (which is included in
$\widehat{N(p_\varepsilon)}$ by  proposition \ref{inclusionNpvarepsilon}), we have
$$h_{K^{p_\varepsilon}}(\theta)=h_{\{p_\varepsilon\}}(\theta)= \langle \theta ,
 p_\varepsilon\rangle,$$
$$h_{K}(\theta)=h_{\{p_0\}}(\theta)= \langle \theta , p_0\rangle.$$
Therefore
$$ h_{K^{p_\varepsilon}}(\theta)-h_{K}(\theta)=\varepsilon \langle \theta , u\rangle
\text{\quad \quad for \quad} \theta \in \widehat{N(p_0)}\cap\{u\}^* ,$$
 and formula \eqref{firstvariationwidtheq} is proved.

Since the Hausdorff distance between $K^{p_\varepsilon}$ and
$K$ is less  than $\varepsilon |u|$, then for any  $\theta$:
$$| h_{K^{p_\varepsilon}}(\theta)-h_K(\theta) | \leq \vare |u|. \quad $$
   Theorem \ref{limvarepsilonNpu} implies that
$$\mis \left(\widehat{N(p_\varepsilon)}\setminus (\widehat{N(p_0)}\cap\{u\}^*)
\right) \to 0  \text{\quad \quad for \quad}
\varepsilon \to 0^+.$$
This proves that the last integral in \eqref{firstvariationwidtheq} is
infinitesimum of order greater than one of $\varepsilon$ for $\varepsilon \to 0^+$
 and it is positive since  the cap body $K^{p_\varepsilon}$ contains $K$.
\end{proof}

The differential properties of $w(K^p)$  has  been investigated in $\RR^3$ in
 \cite[Satz VI]{Stoll}.
\begin{proposition}
$w(K^p)$ is a convex function of $p$ and for $p\not  \in \pa K$ is differentiable  with
$$\nabla w(K^p)=\frac{2}{\omega_n}\int_{\widehat{N(p)}}\theta\, d\sigma .$$
\end{proposition}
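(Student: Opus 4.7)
I split the proof into convexity and differentiability. The key identity is
$$h_{K^p}(\theta)=\max\bigl(h_K(\theta),\langle\theta,p\rangle\bigr),\quad \theta\in S^{n-1},$$
which follows from $K^p=\co(K\cup\{p\})$ and the standard formula expressing the support function of a convex hull of two sets as the pointwise maximum of their support functions. For fixed $\theta$ the right-hand side is convex in $p$ (it is a maximum of an affine and a constant function), and this convexity passes through the integration against $d\sigma/\omega_n$ entering the definition of $w$, so $p\mapsto w(K^p)$ is convex.

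For differentiability at $p\notin\pa K$, two cases appear. When $p\in\mathit{Int}(K)$ the formula is trivial, since $K^p=K$ makes $w(K^p)$ locally constant and $N(p)=\{0\}$ gives $\widehat{N(p)}=\emptyset$, so both sides vanish. The substantive case is $p\notin K$, where $p$ is an extreme point of $K^p$ and $N(p)$ is a closed convex cone in $\rn$ with nonempty interior. For $u\in\rn$ and small $\varepsilon>0$ the difference quotient is
\begin{equation*}
\frac{w(K^{p+\varepsilon u})-w(K^p)}{\varepsilon}=\frac{2}{\omega_n}\int_{S^{n-1}}\frac{\max(h_K(\theta),\langle\theta,p+\varepsilon u\rangle)-\max(h_K(\theta),\langle\theta,p\rangle)}{\varepsilon}\,d\sigma(\theta);
\end{equation*}
its integrand is bounded by $|u|$, since $\max$ is $1$-Lipschitz. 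Partition $S^{n-1}$ into $A_\pm=\{\theta:\pm(\langle\theta,p\rangle-h_K(\theta))>0\}$ and $A_0=\{\theta:\langle\theta,p\rangle=h_K(\theta)\}$; by Proposition \ref{propsupportcapbody} one has $A_+\cup A_0=\widehat{N(p)}$. Pointwise the integrand tends to $\langle\theta,u\rangle$ on $A_+$ and to $0$ on $A_-$, so provided $\sigma(A_0)=0$, dominated convergence yields
\begin{equation*}
\lim_{\varepsilon\to 0^+}\frac{w(K^{p+\varepsilon u})-w(K^p)}{\varepsilon}=\frac{2}{\omega_n}\int_{\widehat{N(p)}}\langle\theta,u\rangle\,d\sigma=\Bigl\langle u,\,\frac{2}{\omega_n}\int_{\widehat{N(p)}}\theta\,d\sigma\Bigr\rangle,
\end{equation*}
and linearity in $u$ identifies the gradient as stated.

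The only delicate point is $\sigma(A_0)=0$. Since $p\notin K$, $p$ can be strictly separated from $K$ by a hyperplane, so the sublinear function $g(\theta)=h_K(\theta)-\langle\theta,p\rangle$ takes both strictly negative and strictly positive values on $\rn$; in particular $N(p)=\{g\le 0\}$ has nonempty interior and is a proper subset of $\rn$, and $A_0$ is contained in the topological boundary $\pa N(p)$. The boundary of a proper convex subset of $\rn$ has Lebesgue measure zero, which transfers to $\sigma$-measure zero on $S^{n-1}$; this is the one step I would verify with care, the rest being a routine combination of the cap body support function formula with dominated convergence.
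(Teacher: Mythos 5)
Your proof is correct, but it takes a genuinely different route from the paper's, especially for the differentiability part. For convexity the paper argues via the inclusion $K^{p_\lambda}\subseteq \lambda K^p+(1-\lambda)K^q$ together with \eqref{sommaH} and \eqref{monotoneH}, while you integrate the pointwise convexity in $p$ of $h_{K^p}(\theta)=\max\bigl(h_K(\theta),\langle\theta,p\rangle\bigr)$; these are essentially equivalent. For differentiability the paper is much terser: it simply invokes its first variation formula (theorem \ref{firstvariationwidth}) for $K^p$ in $n$ linearly independent directions $u\notin T_{K^p}(p)$, so that the $o(\varepsilon)$ control of the remainder, which ultimately rests on the normal cone convergence of theorem \ref{limvarepsilonNpu}, does the work; you instead differentiate the support-function identity under the integral sign by dominated convergence, which is more self-contained (no cap-body variation machinery) but forces you to deal explicitly with the set $A_0=\{\theta\in S^{n-1}: h_K(\theta)=\langle\theta,p\rangle\}$, on which the pointwise limit of your quotient is $\max(0,\langle\theta,u\rangle)$ rather than $\langle\theta,u\rangle$. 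Your handling of that point is sound and can be closed as follows: for $p\notin K$ the function $g(x)=H_K(x)-\langle x,p\rangle$ is convex, strictly negative somewhere (strict separation) and strictly positive somewhere (take $x=y_0-p$ with $y_0\in K$), so $N(p)=\{g\le 0\}$ is a proper closed convex cone with nonempty interior; moreover, since a convex function that is negative somewhere cannot vanish at an interior point of $\{g\le0\}$, one has $\{g=0\}\subseteq \pa N(p)$, whose Lebesgue-null boundary, being a cone, meets $S^{n-1}$ in a $\sigma$-null set. Two small points are worth making explicit in your write-up: the identification $\widehat{N(p)}=A_+\cup A_0$ follows directly from the definition \eqref{normalcone} together with the max formula (proposition \ref{propsupportcapbody} is phrased in terms of $N_p$, not in terms of this inequality); and the closing step ``linearity in $u$ identifies the gradient'' uses the standard fact that a finite convex function on $\rn$ is differentiable at $p$ exactly when its one-sided directional derivative is linear in $u$ --- this is where the convexity you proved first is actually needed, so it deserves a sentence. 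In exchange for these extra details, your argument avoids theorems \ref{limvarepsilonNpu} and \ref{firstvariationwidth} entirely, whereas the paper's proof is shorter because that machinery is already available and needed elsewhere.
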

\begin{proof} Let $p,q\in \rn$;
 Let $p_{\la}=\la p+(1-\la)q$, $0\leq \la \leq 1$, then $K^{p_\la}\subseteq
 \la K^{p}+(1-\la)K^q$, and
$h_{K^{p_\la}} \leq \la h_{K^p} +(1-\la)h_{K^{q}}$, therefore
$$w(K^{p_\la})\leq  \la w({K^p}) +(1-\la)w({K^{q}}).$$
Hence $w(K^p)$ as a function of $p$ is convex. If $p\in Int(K)$ then $N(p)=\{0\}$ and the
thesis follows trivially. When $p\not \in K$, from equality \eqref{firstvariationwidtheq}
(with $K^p$ in place of $K$, $p$ in place of $p_0$, with $n$ choices of vectors $u$
linearly independents and not in $T_{K^p}(p)$) the thesis follows.
\end{proof}

Next proposition is a consequence of corollary 2 in \cite{Vitale}.
Let us give a shorter proof in our simpler situation of nested
convex bodies.
\begin{proposition}\label{problemII} For any  two convex bodies $\Om_1\subset \Om_2$
of $\RR^n$, the following inequality holds
\begin{equation}\label{al=1/2}
\sqrt[n]{\frac{c^{(0)}_n}{(\diam(\Om_2))^{n-1}}}\dist(\Om_2,\Om_1)
\leq (w(\Om_2)-w(\Om_1)^{1/n},
\end{equation}
 where $c^{(0)}_n$ depends only on
$n$.
\end{proposition}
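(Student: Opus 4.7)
The plan is to bound the difference $w(\Omega_2)-w(\Omega_1)$ from below in terms of the Hausdorff distance $d := \dist(\Omega_2,\Omega_1)$ and the diameter $D := \diam(\Omega_2)$, by exploiting the Lipschitz nature of support functions on $S^{n-1}$. Since everything is translation invariant, I would first translate so that $0 \in \Omega_1 \subset \Omega_2$, which guarantees $|y|\le D$ for every $y\in\Omega_2$, and hence that each $h_{\Omega_i}$ is $D$-Lipschitz on $S^{n-1}$.

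Next, I would use the nestedness. Since $\Omega_1\subset\Omega_2$, by the monotonicity \eqref{monotoneH} the function $f(\theta) := h_{\Omega_2}(\theta)-h_{\Omega_1}(\theta)\ge 0$, and by the formula for Hausdorff distance quoted at the beginning of \S3 there exists $\theta_0\in S^{n-1}$ with $f(\theta_0)=d$. The Lipschitz estimate on $h_{\Omega_1}$ and $h_{\Omega_2}$ gives that $f$ is $2D$-Lipschitz, so on the spherical cap
\[
C_0 \;=\; \bigl\{\theta\in S^{n-1}:\ |\theta-\theta_0|\le d/(4D)\bigr\}
\]
one has $f(\theta)\ge d/2$. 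Note that $d\le D$ forces the angular radius of $C_0$ to be bounded by a dimensional constant, so a standard estimate yields $\sigma(C_0)\ge \tilde c_n\,(d/(4D))^{n-1}$ for some $\tilde c_n>0$ depending only on $n$.

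Combining these with the definition \eqref{meanwidthdef} gives
\[
\frac{\omega_n}{2}\bigl(w(\Omega_2)-w(\Omega_1)\bigr)
\;=\;\int_{S^{n-1}} f(\theta)\,d\sigma(\theta)
\;\ge\; \int_{C_0} f\,d\sigma
\;\ge\; \frac{d}{2}\cdot \tilde c_n\Bigl(\frac{d}{4D}\Bigr)^{\!n-1}
\;=\; c^{(0)}_n\,\frac{d^{\,n}}{D^{\,n-1}},
\]
after absorbing numerical factors into a new dimensional constant $c^{(0)}_n$. Taking $n$-th roots and rearranging gives exactly the inequality (\ref{al=1/2}).

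The only non-routine issue is the lower bound on $\sigma(C_0)$: one must verify that the implicit constant depends only on $n$ uniformly in the (possibly small) radius $d/(4D)$. This is the main technical step, but it is a standard fact about spherical caps once one observes $d/(4D)\le 1/4$; the remaining computation is arithmetic. No use of Vitale's general result is required, because the nested situation avoids the two-sided estimates of Hausdorff distance and reduces everything to a one-sided estimate.
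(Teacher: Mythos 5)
Your proof is correct, but it takes a genuinely different route from the paper's. The paper localizes at points $p\in\pa\Om_2$, $q\in\pa\Om_1$ realizing the Hausdorff distance, inserts the cap bodies $\Om_1\subset\Om_1^{p_0}\subset\Om_1^{p}\subset\Om_2$ (with $p_0$ the midpoint of $pq$), and then bounds $w(\Om_1^{p})-w(\Om_1^{p_0})$ from below via the first-variation formula of theorem \ref{firstvariationwidth}, a circular-cone lower bound for the normal cone at $p_0$ (opening determined by $\tan\beta=\diam(\Om_1)/|p-p_0|$), and lemma \ref{lemmappendix0}; this is what produces the factor $d^{\,n}/\diam^{\,n-1}$ there, with the explicit constant $c^{(0)}_n=2^{-(n-1)}\om_{n-1}/((n-1)\om_n)$. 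You instead argue directly at the level of support functions: after translating so that $0\in\Om_1$, the nonnegative function $f=h_{\Om_2}-h_{\Om_1}$ attains the value $d=\dist(\Om_2,\Om_1)$ at some $\theta_0$, is $2D$-Lipschitz on $S^{n-1}$ (with $D=\diam\Om_2$, since $|y|\le D$ for all $y\in\Om_2$), hence is at least $d/2$ on the cap of chordal radius $d/(4D)$ about $\theta_0$, and integrating over that cap in \eqref{meanwidthdef} gives $w(\Om_2)-w(\Om_1)\ge c\, d^{\,n}/D^{\,n-1}$. All your steps check out: $d\le D$ holds for nested bodies, so the cap radius is at most $1/4$, and the cap-measure bound $\sigma\ge \tilde c_n r^{\,n-1}$ is indeed standard and uniform for such $r$ (angular radius $\psi\ge r$ and $\int_0^\psi\sin^{n-2}t\,dt\ge (2/\pi)^{n-2}\psi^{n-1}/(n-1)$); only the trivial cases $d=0$ or $\diam\Om_2=0$ should be set aside at the outset, exactly as the paper does by assuming $\dist(\Om_2,\Om_1)>0$. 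What each approach buys: yours is more elementary and self-contained, needing nothing beyond the Hausdorff-distance formula and the Lipschitz property of support functions, whereas the paper's reuses the cap-body first-variation machinery it has already developed for the main theorems and yields its constant explicitly; both, as you observe, avoid invoking Vitale's general two-sided comparison.
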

\begin{proof}Let $\dist(\Om_2,\Om_1) >0$.
Let $p\in \pa \Om_2,\, q \in \pa \Om_1$ such that
$|p- q|=\dist(\Om_2,\Om_1),$ and let $p_0$ be the mid point on the segment $pq$; since
$$\Om_2\supset\Om_1^p \supset\Om_1^{p_0}\supset\Om_1  $$ then
\begin{equation}\label{meanwidthringppo}
 w(\Om_1^p)-w(\Om_1^{p_0})  \leq w(\Om_2)-w(\Om_1).
\end{equation}

Let $u$ be the unit vector $(p-p_0)/|p-p_0|$ and let $N_{p_0}$ be the normal cone at
 $p_0$ to $\Om_1^{p_0}$. First let us observe that $  N_{p_0}\subseteq \{u\}^*$. From
  (\ref{firstvariationwidtheq})
$$w(\Om_1^p)-w(\Om_1^{p_0})\geq \frac{2}{\omega_n}|p-p_0|\int_{ \widehat{N(p_0)}}\langle
 \theta , u \rangle \,\, d\sigma(\theta).$$
The tangent cone at $p_0$ at $\Om_1^{p_0}$ is contained in the circular cone $C_{u,\beta}$
 with axis in direction of $u$ and opening $\beta=\arctan (\frac{\diam(\Om_1)}{|q-p_0|})$.
 Therefore
$N(p_0)\supseteq -C_{u,\beta}^*=C_{u,\pi/2-\beta},$
and from lemma \ref{lemmappendix0}
$$\int_{ \widehat{N(p_0)}}\langle  \theta , u \rangle \,\, d\sigma(\theta)\geq
\int_{ \widehat{C_{u,\pi/2-\beta}}}\langle  \theta , u \rangle \,\,
d\sigma(\theta)= \frac{\om_{n-1}}{n-1}(\cos \beta)^{n-1}.$$ Since
$2|q-p_0|=2|p-p_0|=\dist(\Om_2,\Om_1)$ and $\tan
\beta=\frac{\diam(\Om_1)}{|p-p_0|}$,
 it follows that
$$\cos^2\beta=(1+\tan \beta^2)^{-1}=\dfrac{|p-p_0|^2}{|p-p_0|^2+\diam^2(\Om_1)}
\geq \dfrac{|p-p_0|^2}{2(\diam^2(\Om_2))};$$
 (\ref{al=1/2}) follows  from  the previous three inequalities with
  $c^{(0)}_n=2^{-(n-1)}\frac{\om_{n-1}}{(n-1)\om_n}$.
\end{proof}
On the other hand from \eqref{meanwidthdef}

\begin{equation}\label{distw<distH}
w(\Om_2)-w(\Om_1) \leq \frac{2}{\omega_n}\max_{S^{n-1}}|h_2(\theta)-h_1(\theta)|=
\frac{2}{\omega_n}\dist(\Om_2,\Om_1).
\end{equation}

\section{ Self expanding paths}\label{selfexpanding}
\begin{definition}\label{defselfexpandinglinearly}
Let us call  self expanding path (SEP)  a non empty, closed,
connected  and linearly  strictly ordered
 (by $\prec$, with $x_1\prec x_2\Rightarrow x_1\neq x_2$) subset $\ga$  of $\rn$,
 with the property:
\begin{equation}\label{IBDCmonlinearly}
x_1, x_2,
x_3 \in \ga, \; \mbox{ and } \; x_1 \prec x_2 \prec x_3 \quad
\Longrightarrow \quad  |x_2-x_1| \leq |x_3-x_1|.
\end{equation}
\end{definition}
This class of paths,   was studied as class of curves in
\cite{Manselli-Pucci} with an added rectifiability hy\-po\-thesis;
differential properties and bounds were obtained. Here
 properties are proved from the above geometric definition with  no rectifiability
 assumptions. 
\begin{rem}
Let us notice that the graph  of a continuous and monotone real
 function $f$ on a bounded interval is a SEP with ordering
  $p_1\equiv(t_1,f(t_1)) \prec p_2\equiv (t_2,f(t_2))$ iff $t_1 < t_2$.
\end{rem}

\begin{definition}\label{defgax}
Let $x_0\in \ga$, let us denote
$$\ga_{x_0}=\{x\in \ga : x\prec x_0\}\cup \{x_0\}.$$
 \end{definition}
\begin{proposition}\label{proprsdc} If $\ga$ is a self expanding path and $x\in\ga$,
  then for any $p,q \in co(\ga_x)\setminus \{x\}$
\begin{equation}\label{angcond}
\langle p-x,q-x\rangle >0,
\end{equation}
and any two half lines from $x\in \ga$ in the tangent cone at $co(\ga_x)$ are the
sides of an  angle  less than or equal to $\pi/2$.
\end{proposition}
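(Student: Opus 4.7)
The plan is to first establish the angle inequality on points of $\ga_x$ itself via the SEP property and the polarization identity, then extend it to $\co(\ga_x)$ by bilinearity, and finally pass to the tangent cone by taking limits.

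First, I would pick any two points $y_1, y_2 \in \ga_x \setminus \{x\}$ and use the linear order to assume, without loss of generality, $y_1 \prec y_2$. If $y_1 = y_2$, the inner product $\langle y_1-x, y_1-x\rangle = |y_1-x|^2 > 0$ is obvious. Otherwise $y_1 \prec y_2 \prec x$, so the SEP property \eqref{IBDCmonlinearly} yields $|y_2 - y_1| \leq |x - y_1|$. The polarization identity
\begin{equation*}
2\langle y_1-x, y_2-x\rangle = |y_1-x|^2 + |y_2-x|^2 - |y_1-y_2|^2
\end{equation*}
then gives $2\langle y_1-x, y_2-x\rangle \geq |y_2-x|^2 > 0$. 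Hence $\langle y_1-x, y_2-x\rangle > 0$ for every pair in $\ga_x \setminus \{x\}$.

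Next, for arbitrary $p, q \in \co(\ga_x) \setminus \{x\}$, I would expand them as finite convex combinations $p = \sum_i \lambda_i y_i$ and $q = \sum_j \mu_j z_j$ with $y_i, z_j \in \ga_x$ and strictly positive weights. The terms where $y_i = x$ (resp.\ $z_j = x$) contribute zero to $p - x$ (resp.\ $q - x$), so by bilinearity
\begin{equation*}
\langle p-x, q-x\rangle = \sum_{\substack{y_i \neq x \\ z_j \neq x}} \lambda_i \mu_j \langle y_i - x, z_j - x\rangle.
\end{equation*}
Since $p \neq x$ and $q \neq x$, at least one $y_{i_0} \neq x$ and one $z_{j_0} \neq x$ occur with positive weights, and every summand is nonnegative with that one strictly positive by the previous step. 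This proves \eqref{angcond}.

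For the final claim, a half line from $x$ in the tangent cone $T_{\co(\ga_x)}(x)$ is of the form $\{sv : s \geq 0\}$ for some $v \neq 0$ with $v = \lim_k s_k(p_k - x)$, $p_k \in \co(\ga_x)$, $s_k \geq 0$. For two such unit vectors $v_1, v_2$, we compute $\langle v_1, v_2\rangle$ as the limit of $s_k^{(1)} s_k^{(2)} \langle p_k^{(1)} - x, p_k^{(2)} - x\rangle \geq 0$ (for large $k$ necessarily $p_k^{(i)} \neq x$, so \eqref{angcond} applies). Thus $\langle v_1, v_2\rangle \geq 0$, which is precisely the statement that the angle between the two half lines is at most $\pi/2$. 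The only point needing care is the passage to the closure, where the strict inequality \eqref{angcond} degenerates to a non-strict one; but this is exactly what the closed angle bound $\leq \pi/2$ allows.
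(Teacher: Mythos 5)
Your proof is correct and takes essentially the same route as the paper: the SEP inequality $|x_2-x_1|\le |x-x_1|$ for $x_1\prec x_2\prec x$ combined with the law of cosines (your polarization identity) gives the acute angle at $x$, and the passage to $\co(\ga_x)$ and to the tangent cone is exactly the reduction the paper invokes. The only difference is that you write out explicitly the bilinearity and limiting arguments that the paper leaves implicit.
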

\begin{proof} It is enough to prove inequality \eqref{angcond} for $ x_1,x_2 \in \ga$,
 $x_1\prec x_2 \prec  x$. From \eqref{IBDCmonlinearly}
$$0<|x_2-x_1|\leq |x-x_1|;$$
therefore  the triangle  of vertices $x,x_1,x_2$ has an acute angle at the vertex $x$.
\end{proof}
\begin{cor}
At  any point $p$ of any  self expanding path  $\ga$ the  inclusion
\begin{equation}\label{NpsubsetTp}
N_{co(\ga_p)}(p)\supseteq -T_{co(\ga_p)}(p)
\end{equation}
holds.
\end{cor}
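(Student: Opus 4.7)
The plan is to deduce the inclusion directly from the angular condition in Proposition \ref{proprsdc} together with the duality relation $N_K(q)=-T_K^*(q)$ recorded in \eqref{N=-T*}. Set $K:=co(\ga_p)$. Applying \eqref{N=-T*} at $q=p$, the target inclusion $-T_K(p)\subseteq N_K(p)$ is equivalent to
$$ T_K(p)\subseteq T_K^*(p), $$
i.e.\ to saying that the tangent cone is contained in its own dual cone. By the definition of the dual cone, this amounts to verifying that $\langle u,v\rangle\ge 0$ for every pair $u,v\in T_K(p)$.

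By the definition of $T_K(p)$ as the closure of $\bigcup_{y\in K}\{s(y-p):s\ge 0\}$, bilinearity and continuity of the inner product reduce the task to the check
$$ \langle y_1-p,\; y_2-p\rangle\ \ge\ 0\qquad\forall\, y_1,y_2\in K=co(\ga_p). $$
But this is exactly the content of Proposition \ref{proprsdc}: the strict inequality \eqref{angcond} gives it for $y_1,y_2\in co(\ga_p)\setminus\{p\}$, while if $y_1=p$ or $y_2=p$ the inner product is zero and the inequality is trivial.

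The only mildly technical step is the passage from the generating rays $s(y-p)$, $s\ge 0$, $y\in K$, to all of $T_K(p)$; this is a standard continuity argument (a nonnegative bilinear form on a set remains nonnegative on its closure in the conical hull). I therefore do not expect any genuine obstacle: the corollary is essentially a repackaging of Proposition \ref{proprsdc} through the duality $N=-T^*$.
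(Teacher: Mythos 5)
Your argument is correct and is essentially the paper's intended deduction: the second assertion of Proposition \ref{proprsdc} already says every pair of half lines in $T_{co(\ga_p)}(p)$ makes an angle at most $\pi/2$, i.e.\ $T\subseteq T^*$, and combining this with \eqref{N=-T*} gives $N=-T^*\supseteq -T$. The paper leaves the corollary as immediate, and your closure/continuity remark is exactly the (routine) step hidden in that second assertion, so there is nothing to add.
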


From now on it will be assumed  that all the self expanding paths $\ga$  considered are
contained in a closed ball $\tilde{B}$, and let $\Ga$ this class. Of course if $\ga \in\Ga $
and $x_0\in \ga$, then $\ga_{x_0}\in \Ga$. The path $\ga$ with the topology induced by $\rn$
is a metric space. Let us notice  also that $Int (co(\ga_x))$ can be an  empty set, i.e $\dim \mathit{Aff}(co(\ga_x))) <n.$

\begin{lemma}\label{geomSEP}
Let  $\ga \in \Ga$. The following properties hold
\begin{enumerate}
\item[(i)] if $x\in \ga , x\neq \min \ga $, then $x \in \pa_{rel} co(\ga_x)$;\\
\item[(ii)] $x_1 \prec x_2 \in \ga   \Rightarrow x_2 \not\in co(\ga_{x_1})$;\\
\item[(iii)]  $x_1 \prec x_2 \in \ga   \Rightarrow w(co( \ga_{x_1})) < w(co( \ga_{x_2}))$.
\end{enumerate}
\end{lemma}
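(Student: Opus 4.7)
The plan is to dispatch the three items in order, each leaning on Proposition \ref{proprsdc}.

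For (i) I argue by contradiction. Suppose $x \in \relint \co(\ga_x)$. Since $x \neq \min \ga$ there is some $p \in \ga_x$ with $p \neq x$, hence $p \in \co(\ga_x) \setminus \{x\}$. The definition of relative interior, applied inside $\aff \co(\ga_x)$, produces a small $\varepsilon > 0$ such that the reflected point $q := x - \varepsilon(p-x)$ also belongs to $\co(\ga_x) \setminus \{x\}$. Then $\langle p-x, q-x\rangle = -\varepsilon |p-x|^2 < 0$, contradicting the strict positivity in Proposition \ref{proprsdc}.

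For (ii) I use Carath\'eodory. If $x_2 \in \co(\ga_{x_1})$, write $x_2 = \sum_{i=1}^k \lambda_i y_i$ with $y_i \in \ga_{x_1}$, $\lambda_i > 0$, $\sum \lambda_i = 1$. Every $y_i$ satisfies $y_i \preceq x_1 \prec x_2$, so $y_i \in \co(\ga_{x_2}) \setminus \{x_2\}$. The case $k=1$ is immediate since it forces $x_2 = y_1 \prec x_2$. Otherwise rewrite the combination as $\sum \lambda_i (y_i - x_2) = 0$ and take the inner product with $y_1 - x_2$: by Proposition \ref{proprsdc} every term $\lambda_i \langle y_i - x_2, y_1 - x_2\rangle$ is strictly positive, so the sum cannot vanish.

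For (iii), the inclusion $\co(\ga_{x_1}) \subseteq \co(\ga_{x_2})$ together with the monotonicity \eqref{monotoneH} of support functions yields $w(\co(\ga_{x_1})) \leq w(\co(\ga_{x_2}))$ immediately. To upgrade this to strict inequality I invoke Proposition \ref{problemII}: it suffices to prove $\cl \co(\ga_{x_1}) \subsetneq \cl \co(\ga_{x_2})$, which in turn follows from $x_2 \notin \cl \co(\ga_{x_1})$. I extend the Carath\'eodory argument of (ii) by a limit procedure: given $z_m \in \co(\ga_{x_1})$ with $z_m \to x_2$, write $z_m = \sum_{i=1}^{n+1} \lambda_i^{(m)} y_i^{(m)}$ with $y_i^{(m)} \in \ga_{x_1}$; compactness of $\tilde B$ and closedness of $\ga$ give, along a subsequence, limits $y_i \in \ga$ and $\lambda_i \geq 0$ satisfying $\sum \lambda_i = 1$ and $x_2 = \sum \lambda_i y_i$. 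Then I apply the SEP axiom twice: first on $(y_i^{(m)}, x_1, x_2)$ to rule out $y_i = x_2$ (it would force $|x_2 - x_1| \leq 0$), and next on $(y_i^{(m)}, x_2, y_i)$ to rule out $x_2 \prec y_i$ (it would force $|y_i - x_2| \leq 0$). Hence every $y_i$ with $\lambda_i > 0$ lies in $\ga_{x_2} \setminus \{x_2\}$, and the inner-product argument of (ii) now yields the same contradiction.

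The principal obstacle sits in (iii): the linear order $\prec$ on $\ga$ is not a priori topologically closed, so one cannot directly pass to the limit in $y_i^{(m)} \preceq x_1 \prec x_2$. The SEP inequality supplies exactly the missing continuity, in the two specific metric comparisons required to force $y_i \prec x_2$ in the limit.
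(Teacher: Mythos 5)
Your proof is correct; all three parts go through. Part (i) is essentially the paper's argument in a different dress: the paper observes that if $x$ lay in the relative interior, the tangent cone of $co(\ga_x)$ at $x$ would be all of $\mathit{Aff}(co(\ga_x))$, against Proposition \ref{proprsdc}, while you realize the same contradiction directly by reflecting $p$ through $x$ inside the affine hull. For (ii) the paper argues metrically: it uses that $\ga_{x_1}$ is compact, so $x_2\in co(\ga_{x_1})$ would lie at positive distance from $\ga_{x_1}$ and hence in the interior of a segment with endpoints in $co(\ga_{x_1})\subset co(\ga_{x_2})$, again contradicting \eqref{angcond}; your Carath\'eodory-plus-inner-product computation reaches the same contradiction purely algebraically and, notably, without needing $\ga_{x_1}$ to be closed. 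The divergence is clearest in (iii): the paper simply combines (ii) with the (asserted) strict monotonicity of the mean width under proper inclusion of convex bodies, which tacitly relies on $co(\ga_{x_1})$ being compact, whereas you prove the stronger statement $x_2\notin \cl co(\ga_{x_1})$ by a limiting Carath\'eodory argument, invoking the SEP inequality \eqref{IBDCmonlinearly} twice to pin down the limit points $y_i$ (this is the genuinely new ingredient, and your two metric comparisons are exactly what is needed, since the order need not be closed), and then obtain strictness quantitatively from Proposition \ref{problemII}. What your route buys is independence from the closedness of $\ga_{x_1}$, a point the paper takes for granted; what the paper's route buys is brevity, since that closedness is in fact a one-line consequence of \eqref{IBDCmonlinearly} (if $\ga_{x_1}\ni y_m\to y\in\ga$ with $x_1\prec y$, then $|x_1-y_m|\le |y-y_m|\to 0$ forces $y=x_1$, a contradiction), after which $co(\ga_{x_1})$ is a convex body, and (ii) alone already yields proper inclusion and hence the strict increase of $w$.
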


\begin{proof}
(i) follows from proposition \ref{proprsdc}: if $x$ is not on the relative boundary of
 $co(\ga_x)$ then the  tangent cone at $x$ will be $\mathit{Aff}(co(\ga_x))$, in
 contradiction with \eqref{angcond}.
If  (ii) does not hold,  then $x_2 \in co( \ga_{x_1})$. On the other hand $x_2$ has
positive distance from the compact set $\ga_{x_1}$; then $x_2$ must be  in the interior of
a segment with end points $y,z\in co(\ga_{x_1})\subset co(\ga_{x_2 })$, in contradiction
with \eqref{angcond}. (iii) follows from (ii) since $co( \ga_{x_1}) \subset
\neq co( \ga_{x_2})$ and the mean width is a strictly increasing function in the class of
 convex bodies with respect the inclusion relation.
\end{proof}

\begin{definition}
Let us call a parametrization of a path $\ga\in\Ga$ a mapping of a real interval
 $T$, $\ T\ni t\to x(t) \in \ga$, satisfying $\forall x_0\in\ga$, $\{t\in T: x(t)=x_0 \}$ is
 an interval (possibly reduced to a point), and
$$x_0,x_1 \in \ga,  x_0\prec x_1  \Longrightarrow \sup\{t\in T:x(t)=x_0 \}
 <\inf\{t\in T:x(t)=x_1 \}.$$
A parametrization will be called continuous if $T $ is a closed
interval and  $x(\cdot) $is continuous in  $T.  $
\end{definition}
\begin{theorem}\label{geomSEPiv}Let  $\ga \in \Ga$. The path
$\ga$ has a one-to-one continuous parametrization $x(w)$,  inverse of
\begin{equation}\label{pargaw}
w(x):=w(co(\ga_x))\in [0, w(co(\ga))].
\end{equation}
\end{theorem}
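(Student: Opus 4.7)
The plan is to establish that the map $x \mapsto w(x) := w(\co(\ga_x))$ is a homeomorphism from $\ga$ onto $[0, w(\co(\ga))]$; its continuous inverse is then the required parametrization. Three ingredients must be combined: strict monotonicity of $w$ along $\ga$ (already supplied by Lemma \ref{geomSEP}(iii)), continuity of $w$ in the ambient topology of $\rn$, and identification of its image as the full interval.

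The crux is the continuity of $w$, which I would in fact establish as Lipschitz continuity via a Hausdorff-distance bound on the nested convex hulls $\co(\ga_y)$. Fix $y_1, y_2 \in \ga$ and assume, without loss of generality, that $y_1 \prec y_2$. Applying the self-expanding condition \eqref{IBDCmonlinearly} with $x_1 = y_1$, $x_3 = y_2$ and arbitrary $x_2 = z \in \ga$ with $y_1 \prec z \preceq y_2$ yields $|z - y_1| \leq |y_2 - y_1|$. Hence every element of $\ga_{y_2}\setminus \ga_{y_1}$ lies in the closed ball $\overline{B}(y_1, |y_2 - y_1|)$, and since $y_1 \in \co(\ga_{y_1})$, passing to convex hulls gives
$$\co(\ga_{y_1}) \subseteq \co(\ga_{y_2}) \subseteq \co(\ga_{y_1}) + \overline{B}(0, |y_2 - y_1|).$$
Consequently $\dist(\co(\ga_{y_1}), \co(\ga_{y_2})) \leq |y_2 - y_1|$, and inequality \eqref{distw<distH} then yields $|w(y_1) - w(y_2)| \leq \frac{2}{\omega_n}|y_1 - y_2|$, so $w$ is Lipschitz on $\ga$.

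To conclude: $\ga$ is a compact, connected subset of $\rn$, so $w(\ga)$ is a compact interval in $\R$. Compactness and continuity guarantee that $w$ attains its extrema on $\ga$; strict monotonicity identifies these extrema as being attained at the $\prec$-least and $\prec$-greatest elements $x_{\min}, x_{\max}$ of $\ga$, and the direct evaluations $\ga_{x_{\min}} = \{x_{\min}\}$ and $\ga_{x_{\max}} = \ga$ give $w(x_{\min}) = 0$ and $w(x_{\max}) = w(\co(\ga))$. Thus $w: \ga \to [0, w(\co(\ga))]$ is a continuous bijection from a compact space onto a Hausdorff one, hence a homeomorphism, and the inverse $w \mapsto x(w)$ is a continuous, one-to-one parametrization (the requirement that each fiber $\{w : x(w) = x_0\}$ be an interval is automatic, since each fiber is a single point). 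I expect the only genuine technical step to be the Hausdorff bound in the middle paragraph, which is the unique place where the SEP condition \eqref{IBDCmonlinearly} is consumed; everything else is a topological formality once Lipschitz continuity and strict monotonicity are in hand.
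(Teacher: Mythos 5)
Your proposal is correct and follows essentially the same route as the paper: injectivity from Lemma \ref{geomSEP}(iii), continuity of $w$ by using \eqref{IBDCmonlinearly} to trap the points of $\ga$ between $y_1$ and $y_2$ in a ball of radius $|y_2-y_1|$ and then bounding the mean-width increment of the nested convex hulls, and surjectivity onto $[0,w(\co(\ga))]$ via compactness and connectedness of $\ga$. The only (inessential) differences are that you pass through \eqref{distw<distH} instead of the paper's direct estimate $w(\co(\ga_{x_2}))-w(\co(\ga_{x_1}))\leq 2\vare$, and you make explicit the compact-to-Hausdorff argument for continuity of the inverse, which the paper leaves implicit.
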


\begin{proof}
From (iii) of previous lemma the map $w: \ga \ni x \to w\in [0, w(co(\ga))]$ is injective.
Moreover if $x_1\prec x_2$, $|x_2-x_1| <\vare $ then the ball $B(x_1,\vare)$ contains all
 the points of $\ga$ between $x_1$ and $x_2$.
Therefore $$co( \ga_{x_2})\subset co( \ga_{x_1})+B(x_1,\vare).$$
Hence $w(co( \ga_{x_2}))-w( co( \ga_{x_1})) \leq 2\vare$. Then, if
$\ga$ is equipped with the topology induced by $\rn$, the map $w$ is
continuous  and maps $\ga$ in a connected  subset of $[0,
w(co(\ga))]$. Since $\ga$ is compact $w$  has minimum and maximum;
by (iii) they are $0,$ and $w(co(\ga))$ respectively. Thus $w$ is
bijective and continuous; its inverse:
$$[0, w(co(\ga))]\ni w \to x(w)$$
is a continuous parametrization of $\ga$.
\end{proof}

Let $\ga$ be a self expanding path  with a continuous
parametrization $x(\cdot)$ defined in a real interval $T$. Let
$\ga(t)=\ga_{x(t)}$. Let us notice that  the integer valued function
$ T \ni t \to \dim \mathit{Aff}(co(\ga(t)))$ is not decreasing and
left continuous.

The following property comes out from  elementary
  geometry and will be used later.
\begin{lemma}\label{lemmaecissec}  Let  $p,q,y_i\in \rn,  \,  i=1, \ldots,s  $. If
\begin{equation}\label{axesineq}
|p-y|^2 \leq |q-y|^2, \quad \mbox{for \quad } y=y_i, \,  i=1, \ldots,s
\end{equation}
then the same holds for any  $y\in co(\{y_i, i=1,\ldots, s\})$.
\end{lemma}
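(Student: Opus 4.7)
The plan is to observe that the inequality $|p-y|^2 \leq |q-y|^2$, although it looks quadratic in $y$, is actually affine in $y$ after the quadratic terms cancel. This reduces the lemma to the trivial fact that affine (in particular linear) inequalities are preserved under convex combinations.

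More precisely, first I would expand both sides and note that
\begin{equation*}
|p-y|^2 - |q-y|^2 = |p|^2 - |q|^2 - 2\langle p-q, y\rangle,
\end{equation*}
so the inequality $|p-y|^2 \leq |q-y|^2$ is equivalent to the affine inequality
\begin{equation*}
2\langle q-p, y\rangle \leq |q|^2 - |p|^2
\end{equation*}
in the variable $y$. Denote the left-hand side by $L(y)$; this is a linear functional of $y$, so for any convex combination $y = \sum_{i=1}^s \lambda_i y_i$ with $\lambda_i \geq 0$ and $\sum_{i=1}^s \lambda_i = 1$ we have $L(y) = \sum_{i=1}^s \lambda_i L(y_i)$.

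Then, by the hypothesis \eqref{axesineq}, each $L(y_i) \leq |q|^2 - |p|^2$, and taking the convex combination gives
\begin{equation*}
L(y) = \sum_{i=1}^s \lambda_i L(y_i) \leq \sum_{i=1}^s \lambda_i (|q|^2 - |p|^2) = |q|^2 - |p|^2,
\end{equation*}
which, reversing the algebraic manipulation, yields $|p-y|^2 \leq |q-y|^2$. Since every element of $\co(\{y_1,\dots,y_s\})$ is such a convex combination (by definition of convex hull), the conclusion holds for all $y \in \co(\{y_i, i=1,\ldots,s\})$.

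There is essentially no obstacle: the whole content of the lemma is the cancellation of the $|y|^2$ terms that turns a quadratic condition into an affine one. No additional machinery from the preceding sections is needed.
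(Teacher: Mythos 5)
Your proof is correct, and it is essentially the paper's argument in algebraic form: the paper observes that the set of $y$ satisfying $|p-y|\leq|q-y|$ is the half-space bounded by the perpendicular bisector hyperplane of the segment $pq$, hence convex, which is exactly your observation that the inequality becomes affine in $y$ after the $|y|^2$ terms cancel. The only cosmetic difference is that the paper reduces to segments $y_1y_2$ while you handle a general convex combination directly; no gap in either case.
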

\begin{proof} It is enough to prove that if \eqref{axesineq} holds for $y_1,y_2$
then, it holds with  any $y$ on the line segment $y_1y_2$. Now
\eqref{axesineq} is equivalent to claim  that both  $y_i,\,i=1,2$
belong to the
 half space containing $p$ and delimited by the hyperplane orthogonal to the line segment
  $pq$ at the middle point of it. Since such half space is convex, it contains all the points $y$
  on  $y_1y_2$. Therefore the  inequality \eqref{axesineq} holds for all $y\in y_1y_2$.
\end{proof}

$||\ga||=\int_T |\dot{x}(t)| dt$ denotes the   length  of a rectifiable curve $\ga$,
with a parametrization  $T \ni t\to x(t)$.
\begin{theorem}\label{selfsecrettifiable} Let $\ga\in \Gamma$ be a  self expanding
path in $\rn$. Then   $\ga, $ parameterized by the mean width
function \eqref{pargaw} is Lipschitz continuous, and a.e.
\begin{equation}\label{dx/dw<=c(n)}
 |\dfrac{dx}{dw}| \leq c^{(1)}_n,
\end{equation}
where $ c^{(1)}_n $ is  a constant  depending only on the dimension
$n$. In particular  any
 self expanding path $\ga$ (which is connected) is a rectifiable curve and
\begin{equation}\label{||ga||<=c(n)}
||\ga||\leq  c^{(1)}_n \cdot w(co(\ga)).
\end{equation}
\end{theorem}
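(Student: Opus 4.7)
The target estimate is the uniform Lipschitz bound
\[
|x_2-x_1|\;\leq\;c_n^{(1)}\bigl(w(\co\ga_{x_2})-w(\co\ga_{x_1})\bigr)
\]
for all $x_1\prec x_2$ on $\ga$; the a.e.\ bound on $|dx/dw|$ and the length bound then follow at once by integration against $dw$, since $\ga$ has already been identified with a connected arc parametrized by the real interval $[0,w(\co\ga)]$ in Theorem~\ref{geomSEPiv}. Fix $x_1\prec x_2$ and set $\vare=|x_2-x_1|$, $u=(x_2-x_1)/\vare$, $K_i=\co(\ga_{x_i})$.

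My first step is to reduce to the simple cap body $K_1^{x_2}$. By Lemma~\ref{geomSEP}(ii) we have $x_2\notin K_1$, and since $\ga_{x_1}\cup\{x_2\}\subseteq\ga_{x_2}$ the inclusion $K_1\subsetneq K_1^{x_2}\subseteq K_2$, combined with monotonicity of the mean width via \eqref{monotoneH}, reduces matters to proving $w(K_1^{x_2})-w(K_1)\geq c_n\vare$. Next I extract the geometric consequences of the SEP property: Lemma~\ref{lemmaecissec} extends $|x_2-y|\geq|x_1-y|$ from $\ga_{x_1}$ to all $y\in K_1$, and combining with the triangle inequality gives $|x_2-y|\geq\vare/2$ throughout $K_1$, as well as $\langle u,y-x_1\rangle\leq\vare/2$; in particular $h_{K_1}(u)\leq\langle u,x_1\rangle+\vare/2$, i.e.\ $K_1$ lies in the half-space with outward normal $u$ through the midpoint of $x_1x_2$. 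I then apply Theorem~\ref{firstvariationwidth} at $p_0=x_1\in\pa K_1$ with direction $u$ and step $\vare$: on the sector $\widehat{N_{K_1}(x_1)}\cap\{u\}^*$ --- which by Proposition~\ref{inclusionNpvarepsilon} is contained in $\widehat{N_{K_1^{x_2}}(x_2)}$ --- one has $h_{K_1}(\theta)=\langle\theta,x_1\rangle$ (because $x_1\in K_1$ and $\theta\in N_{K_1}(x_1)$) and $h_{K_1^{x_2}}(\theta)=\langle\theta,x_2\rangle$ (by Proposition~\ref{propsupportcapbody}), so the integrand is exactly $\vare\langle\theta,u\rangle$. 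Discarding the non-negative higher-order term yields
\[
w(K_1^{x_2})-w(K_1)\;\geq\;\frac{2\vare}{\omega_n}\int_{\widehat{N_{K_1}(x_1)}\cap\{u\}^*}\langle\theta,u\rangle\,d\sigma.
\]

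The hard part will be a uniform lower bound on this spherical integral by a positive constant depending only on $n$. The decisive structural input is Proposition~\ref{proprsdc}: the tangent cone $T_{K_1}(x_1)$ is acute (pairwise dot products $\geq 0$), hence has trivial linearity space, so $N_{K_1}(x_1)=-T_{K_1}^*(x_1)$ spans $\RR^n$ and its sector on $S^{n-1}$ has spherical measure bounded below by a dimensional constant (the $n$-orthant being the extremal configuration). The SEP constraint $\langle u,y-x_1\rangle\leq\vare/2$ places $u$ on the ``dual side'' of $T_{K_1}(x_1)$, and I expect $\widehat{N_{K_1}(x_1)}\cap\{u\}^*$ to contain a spherical region of dimensional size on which $\langle\theta,u\rangle$ is uniformly bounded below --- this is the analogue, at a boundary base point, of the circular-cone bookkeeping used in the proof of Proposition~\ref{problemII}; there the argument was carried out at a midpoint away from $\Om_1$ and produced only the Hölder exponent $1/n$, whereas here the boundary location, together with the acuteness of $T_{K_1}(x_1)$, must yield the genuinely linear bound. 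A delicate subcase is when $u$ lies on $\partial N_{K_1}(x_1)$, where one must argue with a one-sided spherical half-cap; the strict acute-cone structure of $T_{K_1}(x_1)$ should still give enough room. Once this lower bound is in hand, combining with the display above produces the Lipschitz inequality with a dimensional constant $c_n^{(1)}$, and integration against $dw$ gives $\|\ga\|\leq c_n^{(1)}\cdot w(\co\ga)$.
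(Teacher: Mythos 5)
Your reduction to the cap body $K_1^{x_2}$ and the application of Theorem~\ref{firstvariationwidth} (dropping the positive remainder) are sound and match the paper's machinery, but the step you yourself flag as the ``hard part'' is a genuine gap, and in the form you need it — a lower bound $\int_{\widehat{N_{K_1}(x_1)}\cap\{u\}^*}\langle\theta,u\rangle\,d\sigma\geq c_n>0$ for \emph{every} pair $x_1\prec x_2$ — it is false. The SEP condition only forces $K_1$ into the half-space $\{\langle u,y-x_1\rangle\leq\vare/2\}$, an affine constraint with slack $\vare/2$; it does not keep the direction $u$ out of the tangent cone $T_{K_1}(x_1)$, and when $u\in T_{K_1}(x_1)$ one has $\langle\theta,u\rangle\leq 0$ on all of $N_{K_1}(x_1)=-T^*_{K_1}(x_1)$, so the integral is zero. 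Concretely, in $\RR^2$ take $\ga$ to be the segment from $(0,1)$ down to $x_1=(0,0)$ followed by the semicircle of center $(0,1)$ and radius $1$ ending at $x_2=(0,2)$: one checks directly that this is a SEP, yet $K_1=\co(\ga_{x_1})$ is the segment $[(0,0),(0,1)]$, $u=(0,1)\in T_{K_1}(x_1)$, $\widehat{N_{K_1}(x_1)}\cap\{u\}^*$ consists of the two points $(\pm1,0)$ where the integrand vanishes, so your displayed inequality gives $w(K_1^{x_2})-w(K_1)\geq 0$ and nothing more. Thus the single application of the first-variation formula between two arbitrary points of $\ga$ cannot produce the Lipschitz bound; acuteness of the tangent cone controls the \emph{size} of $\widehat{N}$ but not the position of $u$ relative to it, which is what the integral actually measures.

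The paper's proof is structured precisely to get around this. It first stratifies $[0,w(\co\ga)]$ according to $\dim\mathit{Aff}(\co(\ga(w)))$ (a case your argument ignores, and note that in the counterexample above $K_1$ is one-dimensional), then works \emph{locally}: an interior ball $B(\tilde y,\rho_1)\subset\relint\co(\ga(\overline w))$ gives a circular cone $K_{p',1}$ of opening $\alpha$ inside the tangent cone, and the ``fence'' inequality \eqref{fence} shows that for displacements small compared with $\rho_1$ the direction $u$ lies in $-(K_{p',1/2})^*$; only then is Theorem~\ref{firstvariationwidth} applied, with the fixed sector $\widehat{-K_{p',1/2}}\subseteq\widehat{N(p')}\cap\{u\}^*$ and Lemma~\ref{lemmappendix1} giving a lower bound $\frac{\omega_{n-1}}{n-1}\sin^n(\alpha/4)$. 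This yields Lipschitz continuity with a constant depending on $\rho_1$ and $\diam(\ga)$ — i.e.\ rectifiability — and the dimensional constant $c^{(1)}_n$ is \emph{not} obtained from this estimate at all: it comes in Step~e) by invoking the Manselli--Pucci theorem (\cite{Manselli-Pucci}, theorem VII) for rectifiable curves on each closed subinterval. So even a localized version of your argument would deliver only the non-uniform constant; the uniform bound you hope for is essentially the content of that earlier theorem and needs a separate proof.
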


\begin{proof}

{\em Step a)}: Let $x(w), w\in [0, w(co(\ga))]$ be the continuous parametrization of $\ga$
introduced in theorem
\ref{geomSEPiv}. Let $0=w_0 < \cdots < w_i <\cdots w_s=w(co(\ga))$ the decomposition of
$[0, w(co(\ga))]$ such that if $w\in (w_i, w_{i+1}]$, $i=0,..,s-1$ then,
 $\dim \mathit{Aff}(co(\ga(w)))$ has constant value $m_i$.
It is sufficient to  prove that $x(w)$ is lipschitz continuous in
$[w_i, w_{i+1}]$ and \eqref{dx/dw<=c(n)} holds. If $m_0=1$, $[w_0,
w_1]\ni w \to x(w) $ is linear and its derivative  with respect to
the one dimensional  mean width is trivially $1$; then,
\eqref{dx/dw<=c(n)} holds in $[w_0, w_1]$ with $ c^{(1)}_n = \pi
\frac{\omega_n}{\omega_{n+1}}$.\\

{\em Step b)}: It can be assumed that $m_i \geq 2, i=0,..,s-1$.
For every  $\overline{w}\in ( w_i,w_{i+1})$, the relative interior of $co(\ga(\overline{w}))$
 is non empty; let $B(\tilde{y},\rho_1)$ a $m_i$-dimensional ball such that
$$B(\tilde{y},\rho_1)\subset \relint co(\ga(\overline{w})).$$
Let us fix $\overline{w} \in (w_i, w_{i+1})$. Then for every $w'\in
[\overline{w},w_{i+1}]$
$$B(\tilde{y},\rho_1)\subset \relint co(\ga(\overline{w}))\subset \relint co(\ga(w')).$$
In the remaining part of this   step and in the steps c) and d),  for simplicity, we will be
 arguing with $m_i=n.$

Let $p'=x(w')$, $K_{p',1}$ be the convex cone with opening $\alpha=\alpha(w')$ such that
$ p'+K_{p',1}$ is tangent to the ball $B(\tilde{y},\rho_1)$; from proposition \ref{proprsdc}
it follows that $ 0<\alpha \leq \pi/4$ and
$$0<\rho_1\leq \frac{\tilde{y}-p'}{\sqrt{2}}\leq \frac{\diam (co(\ga))}{\sqrt{2}}.$$
 Let  $K_{p',1/2}$ be the convex cone, with the same axis as $K_{p',1}$ and  opening
  $\alpha/2$.  Then
$$K_{p',\lambda}\subset T_{co(\ga(w'))}(p'), \quad \mbox{for \quad }
 \lambda=1,\frac12,\quad w'\in [\overline{w},w_{i+1}].$$

As a consequence of \eqref{NpsubsetTp}
\begin{equation}\label{-(Kp',1/2)^*}
-K_{p',1}\subset N_{co(\ga(w'))}(p')= -(T_{co(\ga(w'))}(p'))^*\subset -(K_{p',1/2})^*.
\end{equation}
Moreover if a unit vector $u\in -(K_{p',1/2})^*$ then 
\begin{equation}\label{u*}
\{u\}^*\supset -K_{p',1/2},
\end{equation} i.e.
\begin{equation}\label{uinK*}
\langle u, \theta \rangle \geq 0 \quad \mbox{if} \quad \theta\in -K_{p',1/2},
\end{equation}
and as consequence $u \not \in T_{co(\ga(w'))}(p')$.\\

{\em Step c)}: The aim of this step is to prove that there exists a
constant $C=C(\rho_1)$ so that, for every $w''\in [w',w_{i+1}]$,
\begin{equation}\label{P''-p'<C}
|x(w'')-x(w')| < C  \Longrightarrow x(w'')-x(w') \in -(K_{p',1/2})^*.
\end{equation}
Let   $y\in  \ga(\overline{w})$. As $\ga$ is a SEP, the real
function  $ w \to |x(w)-y| $ is  not decreasing for $w\geq
\overline{w}$ ; then from lemma \ref{lemmaecissec} the same holds  for every $y\in
co( \ga(\overline{w}))$; in particular
\begin{equation}\label{fence}
|x(w'')-y|\geq |x(w')-y|=|p'-y|, \quad \forall y \in B(\tilde{y},\rho_1),\, w''\geq w'
\geq \overline{w}.
\end{equation}
Let  $p''=x(w'').$ The inequalities \eqref{fence} imply:
 $$p''\not \in \Phi_{p'}:=\bigcup_{v\in \pa (p'+K_{p',1})\cap\pa
  B(\tilde{y},\rho_1)}  B(v, |p'-v|).$$
The boundary of  $p'- (K_{p',1/2})^*$ intersects the boundary of
 $\Phi_{p'}$ in a $(n-2)$-dimensional sphere whose points have distance
  from $p'$ given by $l=\rho_1\frac{\cos \alpha}{\cos \alpha/2}$, (see Figure 1).
\begin{figure}[htb]
\epsfig{file=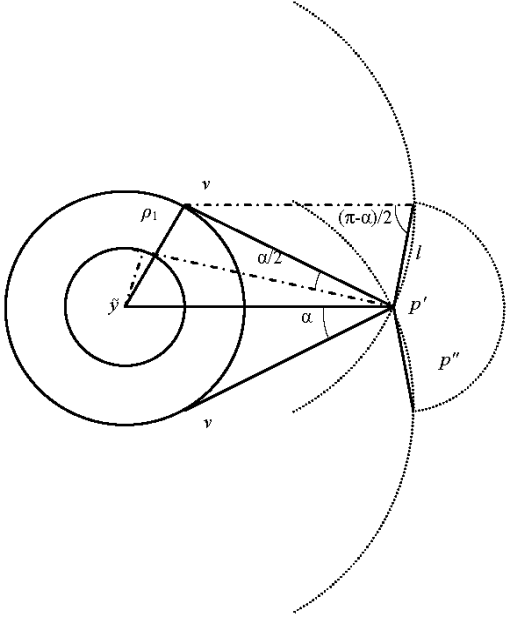, width=10cm} \caption{The boundary of $
\Phi_{p'}$.}
\end{figure}
If $p''$ has distance from $p'$ less than $l$, then
 $p''\in p'- (K_{p',1/2})^*$, i.e.
$$|p''-p'| <\rho_1\frac{\cos \alpha}{\cos \alpha/2}\Longrightarrow p''- p' \in
-(K_{p',1/2})^*.$$ Since  $\phi(\alpha)=\frac{\cos \alpha}{\cos
\alpha/2} $ is decreasing in $[0, \pi/4],$ then \eqref{P''-p'<C} is
proved with
$$ C=\rho_1\cdot \min \phi = \rho_1\frac{\cos \pi/4}{\cos \pi/8  }.$$

{\em Step d)}: The goal of this step is to  prove that $x(w)$ is locally lipschitz
continuous in the open   interval $(w_i, w_{i+1})$ with $n\geq 2$. Let $\delta > 0$;
 uniform continuity of  $x(\cdot)$ in $[\overline{w},w_{i+1}]$ guarantees that there
 exists $h_0$ such that $0< h< h_0$ implies that $|x(w'+h)-x(w')| < \delta.$ Let us choose
  $w''=w'+h$, $\delta=C(\rho_1)$, then $u$, the unit direction of $x(w'+h)-x(w')$, belongs
  to $-(K_{p',1/2})^*$.
As $|p''-p'| < \delta $, from the previous step,  \eqref{uinK*} holds
and
 $u\not \in T_{co(\ga(w'))}(p')$;  thus $\{u\}^*\cap N_{co(\ga(w'))}(p')\supseteq
  -(K_{p',1/2})$. Let us notice that (using the cap body notation)
$$(co(\ga(w')))^{p''}\subset co(\ga(w'')).$$
Since  \eqref{uinK*} holds, theorem  \ref{firstvariationwidth} can be applied with $K= co(\ga(w'))$, $p_0=p'$ ,
  $p_\vare=p''$ and $  u  $ as above.  It follows that
\begin{equation}\label{firstvariationwidthineq}
(w'+h)-w'\geq w(K^{p''})-w(K)\geq  |p''-p'|\frac{2}{\omega_n}\int_{\widehat{N(p')}
\cap\{u\}^* } \langle \theta,u\rangle d \sigma (\theta),
\end{equation}
 with $N_{co(\ga(w'))}(p')=N(p')$.
From \eqref{-(Kp',1/2)^*} we have $N_{p'} \supseteq -K_{p',1/2}$, and with \eqref{u*} we get
$$\int_{\widehat{N(p')}\cap\{u\}^* } \langle
\theta,u\rangle d \sigma \geq \int_{\widehat{-K_{p',1/2}}} \langle
\theta,u\rangle d \sigma.$$ From lemma \ref{lemmappendix1} in  the
appendix, last integral is bounded from below from a positive
constant $C(n, \alpha)=\frac{\omega_{n-1}}{n-1}
\cdot\sin^n(\frac{\alpha}{4}) $; since $C(n, \alpha)$ is increasing
for
 $0 < \alpha \leq \pi/4$ and  $\alpha$  is greater than
  $\overline{\alpha}=\arcsin (\rho_1/ \diam(co(\ga)))$, then
  $C(n, \alpha)>C(n,\overline{\alpha})$, and this bound is uniform in
  $[\overline{w},w_{i+1}]$.
This fact proves that $x(w)$ is lipschitz continuous in
$[\overline{w}, w_{i+1}]$  with a constant depending on  $\overline{w}, \diam (\ga)$.\\

{\em Step e)}: As $\overline{w}$ is arbitrary in $(w_i,w_{i+1})$,
$x(w)$ is locally lipschitz in that interval, thus rectifiable. By
using   \cite[theorem VII]{Manselli-Pucci} in
 every closed subinterval of $(w_i,w_{i+1})$,
\begin{equation}\label{lipschitxcmi}
|x(w'')-x(w')| \leq c(m_i)|w''-w'|,
\end{equation}
holds, where $ c(m_i)$ depends on $m_i$ only. As a consequence
\eqref{lipschitxcmi} holds in $   [w_i,w_{i+1}]$ and \eqref{dx/dw<=c(n)}
follows.\end{proof}

In \cite{Manselli-Pucci} it has been proved that $  c^{(1)}_2 = \pi
$ (best possible constant) and
 $$
  c^{(1)}_n \leq (n-1) \cdot n^{n/2} \frac{\omega_n}{\omega_{n-1}} .
$$
If we drop in the definition \ref{defselfexpandinglinearly} the assumption that $\ga$ is
connected, then the continuity of the inverse of the map $w$ in theorem \ref{geomSEPiv}
 does not hold. As example the piecewise steepest descent curves considered in
 \cite[definition 15]{Bolte} are not connected, moreover it was proved that they  are
 not rectifiable. Here  only connected curves are considered; of course, if $\ga$ is not
 connected, the corresponding properties  hold  for each connected component of $\ga$.

Let us notice that there exist   SEP with  not absolutely continuous parametrization.
 To provide an example let us consider the Cantor function $[0,1] \ni t \to g(t)$,
  see \cite[p. 83]{Halmos}, which is a not  decreasing function, with zero derivative a.e. in
  $[0,1]$, not absolutely continuous, with $g(0)=0, g(1)=1$.
The graphic  curve $\zeta: [0,1]\ni t \to (t,g(t))$ is a planar SEP, but it is not absolutely
continuous, in particular with this parametrization $\zeta$  is not Lipschitz.

\begin{theorem}\label{corIBDpath} Let $\ga$ be a self expanding path and let $x(\cdot)$ be a continuous parametrization  of $\ga$  defined in  a real interval $ T$. Then,
\begin{equation}\label{IBDCmon}
|x(t^{''})-x(t')| \leq |x(t)-x(t')| \text{\quad for all \quad }  t'\leq t^{''} \leq t \in T.
\end{equation}
Moreover where $\dot{x}(t)$exists, the property
\begin{equation}\label{inNco(ga(t))}
\dot{x}(t)\in N_{co(\ga(t))}(x(t))
\end{equation}
holds. If $ x(\cdot) $ is parameterized  by using the  curvilinear
abscissa $ s,$ the formula
\begin{equation}\label{MP}
 \frac{dw(co(\ga(s)))}{ds} \geq \frac{2}{\omega_n}\int_{\widehat{N(x(s))}} \langle
\theta,x'(s) \rangle d \sigma
\end{equation}
holds.
\end{theorem}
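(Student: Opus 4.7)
The statement has three parts, which I would prove in the order given, each building on the preceding.

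For \eqref{IBDCmon}, the argument follows almost immediately from the definition of continuous parametrization combined with the SEP property. Given $t' \leq t'' \leq t$: if $x(t') = x(t'')$ the left side vanishes, and if $x(t'') = x(t)$ both sides agree. Otherwise the three values are distinct, so the separation property of the parametrization forces $x(t') \prec x(t'') \prec x(t)$ and \eqref{IBDCmonlinearly} applies directly.

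For \eqref{inNco(ga(t))}, fix $t$ at which $\dot{x}(t)$ exists. For any $y = x(s) \in \ga(t) \setminus \{x(t)\}$ (necessarily with $s < t$), \eqref{IBDCmon} gives $|x(t) - y| \leq |x(t+h) - y|$ for small $h > 0$. Expanding the square,
\[
0 \leq 2\langle x(t) - y,\; x(t+h) - x(t)\rangle + |x(t+h) - x(t)|^2;
\]
dividing by $h$ and letting $h \to 0^+$, the quadratic term is $o(1)$ and one obtains $\langle \dot{x}(t), y - x(t)\rangle \leq 0$. Since this inequality is affine in $y$, it extends to all of $co(\ga(t))$, yielding $\dot{x}(t) \in N_{co(\ga(t))}(x(t))$.

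For \eqref{MP}, with the arclength parametrization $s$, fix $s$ at which both $x'(s)$ and $\frac{dw}{ds}(s)$ exist (the latter a.e.\ by monotonicity of $w$ from lemma \ref{geomSEP}(iii), the former by Lipschitz regularity from theorem \ref{selfsecrettifiable}). Set $K = co(\ga(s))$, $p_0 = x(s)$, $p_\varepsilon = x(s+h)$, $u_h = (x(s+h) - x(s))/h$, so that $p_\varepsilon = p_0 + h u_h$. Lemma \ref{geomSEP}(ii) ensures $p_\varepsilon \notin K$, so $K^{p_\varepsilon}$ is a genuine cap body and all conclusions of theorem \ref{firstvariationwidth} (whose proof really only uses $p_\varepsilon \notin K$) are available. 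Since $K^{p_\varepsilon} \subseteq co(\ga(s+h))$, monotonicity of $w$ combined with \eqref{firstvariationwidtheq} (dropping its nonnegative second integrand) gives
\[
\frac{w(co(\ga(s+h))) - w(co(\ga(s)))}{h} \;\geq\; \frac{2}{\omega_n} \int_{\widehat{N(x(s))}\cap \{u_h\}^*}\langle \theta, u_h\rangle\, d\sigma.
\]
As $h \to 0^+$, the left side converges to $\frac{dw}{ds}(s)$; on the right, $u_h \to x'(s)$ and dominated convergence (using that $\{\theta : \langle \theta, x'(s)\rangle = 0\}$ has measure zero on $S^{n-1}$) sends the integral to $\frac{2}{\omega_n}\int_{\widehat{N(x(s))}\cap\{x'(s)\}^*}\langle \theta, x'(s)\rangle\, d\sigma$. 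Finally, on $\widehat{N(x(s))}\setminus \{x'(s)\}^*$ the integrand is nonpositive, so enlarging the integration domain to all of $\widehat{N(x(s))}$ only weakens the bound and produces \eqref{MP}.

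The main obstacle lies in the last step: controlling the limit $h \to 0^+$ of both $u_h$ and the half-space $\{u_h\}^*$, and verifying that theorem \ref{firstvariationwidth} applies in the form needed --- the latter being a subtlety because the stated hypothesis $u \notin T_K(p_0)$ is stronger than what the proof actually uses, namely $p_\varepsilon \notin K$, supplied here by lemma \ref{geomSEP}(ii).
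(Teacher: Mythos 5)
Your proposal is correct and follows essentially the same route as the paper, whose proof is only a sketch: \eqref{IBDCmon} directly from \eqref{IBDCmonlinearly}, \eqref{inNco(ga(t))} from the monotonicity of $t\mapsto|x(t)-y|^2$, and \eqref{MP} derived from the identity \eqref{firstvariationwidtheq} (the paper cites this and Manselli--Pucci, Theorem VII, without details). Your filled-in limit argument for \eqref{MP}, including the observation that only $p_\varepsilon\notin K$ (supplied by lemma \ref{geomSEP}(ii)) is needed for the identity part of theorem \ref{firstvariationwidth}, is accurate and consistent with how the paper itself uses that identity.
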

\begin{proof}
 \eqref{IBDCmon} follows from \eqref{IBDCmonlinearly}. In other words, for all $t'\in T$ the function
\begin{equation}\label{IBDCdef}
F(\cdot,x(t')): t \to |x(t)-x(t')|^2
\end{equation}
is a  not decreasing function in $t\geq  t' \in T$, with derivative greater than zero a.e., this implies 
\eqref{inNco(ga(t))}.
Inequality (\ref{MP})  it  is in 
(\cite{Manselli-Pucci}, theorem VII). It follows also from (\ref{firstvariationwidtheq}). 
\end{proof}
\begin{rem}\label{defselfexpanding}
Using \eqref{firstvariationwidtheq} it can be proved that actually equality holds a.e.  in (\ref{MP}).
 \end{rem}

\section{Quasi convex families}\label{quasiconvexfamilies}
Nested families of convex sets have been studied by De Finetti \cite{Defi} and Fenchel
\cite{fenc}.
\begin{definition}\label{defstratifications}
Let us call {\bf convex stratification},(see \cite{Defi}), a non
empty family $\mathfrak{F}$ of  convex bodies in $\rn$, linearly
strictly  ordered by inclusion ($\Om_{1} \subset \Om_{2} $,
$\Om_{1} \neq \Om_{2} $), with a maximum set and a minimum set, not identical.
\end{definition}
Let us call a parametrization of $\mathfrak{F}$ the inverse of a
strictly  increasing map of
 $\mathfrak{F}$ into a subset $W_{\mathfrak{F}}$ of a compact interval $T\subset \RR$.
\begin{definition}\label{defquasiconvexfamily} Let $\mathfrak{F}$ be a parameterized convex
stratification with a parametrization satisfying $W_{\mathfrak{F}}\equiv T$. The family
 $\mathfrak{F}$ will also be denoted $\{\Om_t\}_{t\in T}$. If for
 every
 $s \in T\setminus \{\max T \}$ the property:
$$\bigcap_{t>s}\Om_t=\Om_{s}$$
holds, then
as in \cite{fenc}, $\{\Om_t\}_{t\in T} $ will be called a {\bf quasi convex family}.
\end{definition}
In \cite{fenc} was noticed  that $\{\Om_t\}_{t\in T} $ is a quasi
convex family iff there exists  a lower semi continuous quasi convex
function, with $\{\Om_t\}_{t\in T} $ the family of its sub level
sets.

 Let  $\mathfrak{F}$,  $\mathfrak{G}$ be two
convex stratifications. Let us say that $\mathfrak{F}$ is contained in  $\mathfrak{G}$
if every element of $\mathfrak{F}$ is an element of $\mathfrak{G}$.

\begin{definition}\label{connectedstratifications}
A quasi convex family $\mathfrak{F}$ will be called connected if
$$\forall x \in \max\mathfrak{F} \setminus \relint \min\mathfrak{F}
 \quad \exists \, Q\in
\mathfrak{F} : x\in \partial_{rel} Q.$$
\end{definition}
Let $\mathfrak{F}$ be a connected  quasi convex family; in  $\mathfrak{F}$ let us consider
the usual Hausdorff distance between compact sets. A parametrization of $\mathfrak{F}$  will
 be called continuous if the map from the metric space $\mathfrak{F}$  to $T=W_{\mathfrak{F}}$
   is continuous.

  \begin{rem}
 Let us notice  that a quasi convex family may be not connected. As example let us
consider the family $\{\Om_{\tau}\}$ of the sub level sets of a
continuous quasi convex function $f$ with a flat zone; a flat zone
for $f$ is  an annulus with interior points
 bounded by two convex bodies, where $f$ is constant. For any interior point $x$ in the
 annulus does  not exist a level subset $\Om_{\tau}$ with $x \in \pa_{rel}\Om_{\tau}$.
  \end{rem}
In \cite{fenc} it is noticed that
\begin{equation}\label{fenchel1}
 \Omega_s = \overline{ \cup_{t<s} \Omega_t } \quad  \forall s \in T
 \setminus \{\mbox{minT}\}.
\end{equation}
is a necessary condition for  $ \mathfrak{F}$ to be a family of sublevel sets for a convex function.

\begin{lemma}\label{fenchel10}A quasi convex family $ \mathfrak{F} = \{ \Omega_t \}_{t \in T}
$ is connected iff \eqref{fenchel1} holds.
\end{lemma}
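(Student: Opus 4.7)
The plan is to prove the two implications separately. The forward direction $\eqref{fenchel1}\Rightarrow\text{connected}$ is handled by an infimum argument; the reverse direction $\text{connected}\Rightarrow\eqref{fenchel1}$ is handled contrapositively, by exhibiting a point that witnesses the failure of connectedness.

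For the forward direction, fix $x\in\max\mathfrak{F}\setminus\relint\min\mathfrak{F}$ and set $s^{*}:=\inf\{t\in T: x\in\Omega_t\}$, which is well defined since $x\in\max\mathfrak{F}$. The right-continuity (intersection) property $\bigcap_{t>s^{*}}\Omega_t=\Omega_{s^{*}}$ of a quasi convex family forces $x\in\Omega_{s^{*}}$. If $s^{*}=\min T$, then $x\in\min\mathfrak{F}$ and the hypothesis $x\notin\relint\min\mathfrak{F}$ gives $x\in\partial_{rel}\min\mathfrak{F}$, so $Q:=\min\mathfrak{F}$ witnesses the connectedness condition. Otherwise $s^{*}>\min T$: by the definition of the infimum $x\notin U:=\bigcup_{t<s^{*}}\Omega_t$, and $U$ is convex as the increasing union of convex bodies. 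The standard identity $\relint\overline{U}=\relint U$ for convex sets, combined with \eqref{fenchel1}, yields $\relint\Omega_{s^{*}}=\relint \overline{U}=\relint U\subseteq U$; since $x\notin U$, this forces $x\notin\relint\Omega_{s^{*}}$, so $Q:=\Omega_{s^{*}}$ satisfies $x\in\partial_{rel}Q$.

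For the reverse direction, suppose \eqref{fenchel1} fails at some $s>\min T$ and set $V:=\overline{\bigcup_{t<s}\Omega_t}$, so that $V\subsetneq\Omega_s$. I aim to produce $x\in\max\mathfrak{F}\setminus\relint\min\mathfrak{F}$ lying in no $\partial_{rel}\Omega_r$. Because $V$ is closed convex and $\relint\Omega_s$ is dense in $\Omega_s$, the containment $\relint\Omega_s\subseteq V$ would force $\Omega_s=\overline{\relint\Omega_s}\subseteq V$, contradicting $V\subsetneq\Omega_s$, so I may choose $x\in\relint\Omega_s\setminus V$. This $x$ fails to be on $\partial_{rel}\Omega_r$ for $r<s$ (since $\Omega_r\subseteq V$ and $x\notin V$), for $r=s$ (since $x\in\relint\Omega_s$), and for $r>s$ with $\aff\Omega_r=\aff\Omega_s$ (since inclusion of convex bodies with equal affine hull gives $\relint\Omega_s\subseteq\relint\Omega_r$, placing $x$ in $\relint\Omega_r$).

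The main obstacle is the remaining case $r>s$ with $\aff\Omega_r\supsetneq\aff\Omega_s$, where $\Omega_s\subseteq\partial_{rel}\Omega_r$ puts $x$ on $\partial_{rel}\Omega_r$ automatically. I would handle this by exploiting the intersection property $\bigcap_{r>s}\Omega_r=\Omega_s$: the sets $\Omega_r$ with strictly larger affine hull must collapse onto $\aff\Omega_s$ as $r\to s^{+}$, so the traces $\partial_{rel}\Omega_r\cap\aff\Omega_s$ of these higher-dimensional levels form a controlled subset of $\relint\Omega_s$ of lower dimension inside $\aff\Omega_s$. Refining the selection of $x$ inside the nonempty relatively open set $\relint\Omega_s\setminus V$ to avoid this exceptional collection — and verifying that the residual set is still nonempty — is the technical crux of the reverse direction, and it is here that one must use the geometry of the quasi convex family in an essential way rather than just the three formal properties invoked above.
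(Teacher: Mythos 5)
Your forward implication (\eqref{fenchel1} $\Rightarrow$ connected) is correct and is essentially the paper's own argument, written a bit more carefully: taking $s^{*}=\inf\{t: x\in\Omega_t\}$, using $\bigcap_{t>s^{*}}\Omega_t=\Omega_{s^{*}}$, and justifying $x\notin\relint\Omega_{s^{*}}$ via $\relint\overline{U}=\relint U$ fills in exactly the details the paper leaves implicit. No problem there.

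The reverse implication is genuinely incomplete, and the repair you sketch cannot succeed. The case you isolate --- $r>s$ with $\aff\Omega_r\supsetneq\aff\Omega_s$, so that $\Omega_s\subseteq\partial_{rel}\Omega_r$ --- is not avoidable by a finer selection of $x$ inside $\relint\Omega_s\setminus V$: in that situation \emph{every} point of $\Omega_s$, hence every admissible $x$, lies on $\partial_{rel}\Omega_r$ for every such $r$, so no refinement of the choice of $x$ helps. Concretely, consider in $\RR^2$ the family $\Omega_t=[0,t]\times\{0\}$ for $t\in[0,1)$, $\Omega_1=[0,2]\times\{0\}$, $\Omega_t=[0,2]\times[0,t-1]$ for $t\in(1,2]$: this satisfies $\bigcap_{t>s}\Omega_t=\Omega_s$ for all $s$, condition \eqref{fenchel1} fails at $s=1$, and yet every point of $\max\mathfrak{F}$ other than the origin lies on the relative boundary of some member (points of the bottom edge lie on $\partial\Omega_t$ for all $t>1$), so the condition of definition \ref{connectedstratifications} holds. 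Thus the step you flag as ``the technical crux'' is a real gap, and with lower-dimensional members it cannot be closed at all; what is needed is an extra hypothesis excluding a jump of $\dim\aff\Omega_t$ at the critical level (for instance full-dimensional members, or $\aff\Omega_t=\aff\Omega_{s_0}$ for $t>s_0$). For comparison, the paper's proof of this direction simply asserts $x_0\in\relint\Omega_t$ for all $t>s_0$, which is precisely the assertion that fails when the affine hull jumps; your argument for the remaining cases ($r<s$, $r=s$, and $r>s$ with equal affine hulls) coincides with the paper's in the regime where that assertion is valid, but the flagged case is where both your proposal and the literal statement run into trouble.
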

\begin{proof}
(\ref{fenchel1}) $ \Rightarrow   \mathfrak{F} $ is connected.

Let $ x \in \max\mathfrak{F} \setminus \relint\min
\mathfrak{F}. $ If  $  \{ t \in T: \relint \Omega_t \ni x
\} $ is empty, then  $ x \in \pa_{rel}\max \mathfrak{F}; $ if not,
let $t_2 := \inf \{ t \in T: \relint \Omega_t \ni x \}; $
then $ \Omega_{t_2} = \cap_{t>t_2} \Omega_t \ni x.$ If $ t_2 =
 \min T, $ then $ x \notin \relint
\min  \mathfrak{F} ,$ and $ x \in
 \pa_{rel} \min  \mathfrak{F} .$
If $ t_2 > \min T, $ then by (\ref{fenchel1}),
$$
\Omega_{t_2} = \overline{ \cup_{t<t_2} \Omega_t};
$$
as $ \Omega_t \not \, \ni x \: ( t < t_2 ), $ then $ x \notin
\cup_{t<t_2} \Omega_t  $ and thus $ x \in \pa_{rel} \Omega_{t_2}.$

 $\mathfrak{F} $ is connected $ \Rightarrow $ (\ref{fenchel1}).

 Assume, by contradiction, that there exists $ s_0 \in T \setminus
 \{\min T \}$ satisfying
 $$
 \overline{ \cup_{t<s_0} \Omega_t} \subset \Omega_{s_0} \quad
 \mbox{and} \quad
\overline{ \cup_{t<s_0} \Omega_t} \neq \Omega_{s_0} ;
 $$
 then
  $  \relint \Omega_{s_0} \setminus ( \overline{ \cup_{t<s_0}
  \Omega_t}))
   \neq \emptyset. $
Thus there exists $ x_0 \in  \relint\Omega_{s_0}, $ $
x_0 \notin \Omega_t ( t< s_0 ), $ $  x_0 \in  \relint 
\Omega_{t}$ $ ( t> s_0 ),  $ contradicting the hypothesis that
$\mathfrak{F} $ is connected.
\end{proof}

\begin{lemma}Let $  \mathfrak{F}=\{\Om_t\}_{t\in T}$ be  a  connected quasi convex family and
let $K$ be a convex body
satisfying
$$\min \mathfrak{F} \subseteq K \subseteq \max \mathfrak{F}.$$
Then,  there are $\Om_{t_1},\Om_{t_2}\in \mathfrak{F}$ satisfying
$\Om_t\supseteq  K\, \mbox{ iff } \, t\geq t_2, $ and $ \Om_t\subseteq K
$ iff $   t\leq t_1.$
\end{lemma}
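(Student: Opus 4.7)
The plan is to treat the two claims symmetrically via the sets $A := \{t \in T : \Omega_t \supseteq K\}$ and $B := \{t \in T : \Omega_t \subseteq K\}$. The hypothesis $\min \mathfrak{F} \subseteq K \subseteq \max \mathfrak{F}$ gives $\max T \in A$ and $\min T \in B$, and the nestedness of $\mathfrak{F}$ makes $A$ upward-closed and $B$ downward-closed in $T$. Setting $t_2 := \inf A$ and $t_1 := \sup B$ in the compact interval $T$, the stated equivalences then follow from the definition of inf and sup together with the two attainment claims $t_2 \in A$ and $t_1 \in B$, which are the only things left to verify.

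For $t_2 \in A$: if $t_2 = \max T$ the claim is immediate. Otherwise $t_2 \in T \setminus \{\max T\}$, and since $T$ is a compact interval I may pick a sequence $\{\tau_n\} \subset A$ with $\tau_n \searrow t_2$, $\tau_n > t_2$. Each $\Omega_{\tau_n} \supseteq K$, and cofinality of $\{\tau_n\}$ in $(t_2, \max T] \cap T$, together with nestedness, yields $\bigcap_n \Omega_{\tau_n} = \bigcap_{t > t_2} \Omega_t$. The defining property of a quasi convex family (Definition \ref{defquasiconvexfamily}) identifies this intersection with $\Omega_{t_2}$, so $K \subseteq \Omega_{t_2}$ and $t_2 \in A$.

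For $t_1 \in B$: every $t < t_1$ lies in $B$, because there exists $b \in B$ with $t < b \leq t_1$, hence $\Omega_t \subseteq \Omega_b \subseteq K$. Thus $\bigcup_{t < t_1} \Omega_t \subseteq K$, and since $K$ is closed, $\overline{\bigcup_{t < t_1} \Omega_t} \subseteq K$. If $t_1 = \min T$ then $\Omega_{t_1} = \min \mathfrak{F} \subseteq K$ by hypothesis; otherwise, the connectedness of $\mathfrak{F}$ combined with Lemma \ref{fenchel10} and formula \eqref{fenchel1} gives $\Omega_{t_1} = \overline{\bigcup_{t < t_1} \Omega_t} \subseteq K$. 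The main subtlety throughout is the attainment of the two extremal parameters: this is precisely where the quasi convex family identity $\bigcap_{t > s} \Omega_t = \Omega_s$ is used for $t_2$ and the connectedness identity \eqref{fenchel1} is used for $t_1$; everything else is a direct consequence of the linear order by inclusion.
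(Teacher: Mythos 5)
Your proposal is correct and follows essentially the same route as the paper: define $t_2=\inf\{t:\Om_t\supseteq K\}$ and $t_1=\sup\{t:\Om_t\subseteq K\}$, use the quasi convex family identity $\Om_{t_2}=\bigcap_{t>t_2}\Om_t$ to get $t_2$ attained, and use connectedness via Lemma \ref{fenchel10}/\eqref{fenchel1} to identify $\Om_{t_1}$ with the closure of the union of the smaller sets, hence $\Om_{t_1}\subseteq K$. Your handling of the boundary cases ($t_2=\max T$, $t_1=\min T$) replaces the paper's preliminary dismissal of $K=\min\mathfrak{F}$ or $K=\max\mathfrak{F}$, but the substance is identical.
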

\begin{proof} If $K=\min  \mathfrak{F}$ or $K=\max  \mathfrak{F}$ the lemma is obvious. Let us assume
 $\min \mathfrak{F} \neq K \neq \max \mathfrak{F}.$ Let $t_2=\inf\{t\in T: \Om_t \supseteq K\}$; then
$\Om_{t_2}=\bigcap_{t >t_2}\Om_t \supseteq K.$ Moreover $\Om_t
\supseteq K $ if and only if $ t \geq t_2$. Let $t_1=\sup\{t\in T:
\Om_t \subseteq K\}$ and $A=\bigcup_{t\in T}\{\Om_t\subseteq K\}$;
then $\Om_{t_1}\supseteq \cl A $  and $ \Om_t\subseteq K $ is not
possible if $ t> t_1. $ As $\mathfrak{F}$ is
connected, by previous lemma  $\Om_{t_1}=
\cl A$, thus $ \Om_t \subseteq K $ if and
only if $t\leq t_1$.
\end{proof}
\begin{theorem}\label{connectedimpliseregular}
Let $  \mathfrak{F}$ be  a connected quasi convex family, then  $
\mathfrak{F}, $ with the Hausdorff distance, is a connected complete metric
space; moreover   the mean width parametrization:
$w=w(K),\, K\in \mathfrak{F}$, is a continuous parametrization on the compact interval $[w(\min \mathfrak{F}),w(\max \mathfrak{F})]$.
On the other hand if $  \mathfrak{F}$ is   a   convex stratification and $w(\mathfrak{F})=[w(\min \mathfrak{F}),w(\max \mathfrak{F})]$,  then $  \mathfrak{F}$ is  a connected quasi convex family.\end{theorem}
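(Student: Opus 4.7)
The plan is to split the forward implication into three steps---regularity of $w$ and $w^{-1}$, completeness of $\mathfrak{F}$ in the Hausdorff metric, and surjectivity of $w$ onto the full interval $[w(\min\mathfrak{F}),w(\max\mathfrak{F})]$---and then to derive the converse from the H\"older continuity of $w^{-1}$ established in the first step.

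The first step is routine. From (\ref{distw<distH}) and the strict monotonicity of mean width under strict inclusion of convex bodies, $w$ is strictly increasing and $\frac{2}{\omega_n}$-Lipschitz on $\mathfrak{F}$ with respect to the Hausdorff metric. Applying Proposition \ref{problemII} with the uniform diameter bound $\diam K\le \diam(\max\mathfrak{F})$ yields $\dist(\Omega_1,\Omega_2)\le C|w(\Omega_1)-w(\Omega_2)|^{1/n}$ for all comparable $\Omega_1,\Omega_2\in\mathfrak{F}$, so $w^{-1}$ is H\"older continuous of order $1/n$ on $w(\mathfrak{F})$.

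For the second step I take a Cauchy sequence $\{\Omega_{t_k}\}$, use Blaschke selection to obtain a Hausdorff limit $K\subseteq\max\mathfrak{F}$, then use compactness of $T$ to extract a monotone subsequence $t_{k_j}\to\hat t$. In the increasing case the cofinality of $\{t_{k_j}\}$ in $\{t<\hat t\}$ gives $\cup_j\Omega_{t_{k_j}}=\cup_{t<\hat t}\Omega_t$, and monotone Hausdorff convergence identifies $K=\overline{\cup_j\Omega_{t_{k_j}}}$ with $\overline{\cup_{t<\hat t}\Omega_t}=\Omega_{\hat t}$ by Lemma \ref{fenchel10}. The decreasing case is symmetric, invoking instead the quasi convex family axiom $\Omega_{\check t}=\cap_{t>\check t}\Omega_t$ at the limit. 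Edge cases $\hat t\in\{\min T,\max T\}$ force the subsequence to be eventually constant. For the third step, suppose toward a contradiction that some $\bar w\in[w(\min\mathfrak{F}),w(\max\mathfrak{F})]$ is not hit; partition $\mathfrak{F}=A\sqcup B$ by $w<\bar w$ and $w>\bar w$. A maximizing sequence in $A$ and a minimizing sequence in $B$ are Cauchy by the H\"older estimate, so Step~2 realizes the sup and inf as elements $K_A\in A$, $K_B\in B$ with $K_A\subsetneq K_B$ and no member of $\mathfrak{F}$ strictly between them. Every $K'\in\mathfrak{F}$ with $K'\subsetneq K_B$ then lies in $K_A$, so Lemma \ref{fenchel10} applied to $K_B$ gives $K_B=\overline{\cup\{K':K'\subsetneq K_B\}}\subseteq K_A$, a contradiction. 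Thus $w$ is a continuous bijection onto the compact interval with H\"older continuous inverse, hence a homeomorphism, and $\mathfrak{F}$ inherits compactness, connectedness and completeness.

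For the converse, if $\mathfrak{F}$ is a convex stratification with $w(\mathfrak{F})=[w(\min\mathfrak{F}),w(\max\mathfrak{F})]$, Proposition \ref{problemII} makes $w^{-1}$ H\"older continuous on this interval, so for any $s$ and any monotone sequence $w_k\to s$ the Hausdorff convergence $\Omega_{w_k}\to\Omega_s$ combined with monotonicity yields $\cap_{t>s}\Omega_t=\Omega_s$ (the quasi convex family condition) and $\overline{\cup_{t<s}\Omega_t}=\Omega_s$ (connectedness, by Lemma \ref{fenchel10}). I expect Step~2 to be the delicate point: pinning down an arbitrary Hausdorff limit as an element of $\mathfrak{F}$ genuinely needs both hypotheses---the quasi convex axiom for right-continuity and Lemma \ref{fenchel10} for left-continuity---and without one of them the limit could drop into a gap of $\mathfrak{F}$.
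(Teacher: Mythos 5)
Your proof is correct, and it rests on the same pillars as the paper's: the equivalence of the Hausdorff and mean--width metrics on $\mathfrak{F}$ coming from \eqref{distw<distH} and Proposition \ref{problemII}, Blaschke selection for completeness, and Lemma \ref{fenchel10} together with the intersection axiom of Definition \ref{defquasiconvexfamily}. The organization, however, differs in the middle. The paper identifies a Hausdorff limit $K$ of elements of $\mathfrak{F}$ by invoking the (unnumbered) lemma proved just before the theorem, which produces $\Om_{t_1}\subseteq K\subseteq\Om_{t_2}$ with $\Om_t\subseteq K$ iff $t\le t_1$ and $\Om_t\supseteq K$ iff $t\ge t_2$, and then rules out that both $\dist(\Om_{t_1},K)>0$ and $\dist(\Om_{t_2},K)>0$; you bypass that lemma and identify the limit directly, passing to a monotone subsequence of parameters and using $\overline{\cup_{t<\hat t}\Om_t}=\Om_{\hat t}$ (Lemma \ref{fenchel10}) in the increasing case and $\cap_{t>\check t}\Om_t=\Om_{\check t}$ in the decreasing case. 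You also make explicit two points the paper states rather tersely: that $w(\mathfrak{F})$ exhausts the whole interval (your gap argument with $K_A,K_B$, which again leans on Lemma \ref{fenchel10}) and the converse implication (monotone limits controlled by the H\"older estimate from Proposition \ref{problemII}). The paper's route buys brevity, since the sandwich lemma performs the case analysis once; yours buys self-containedness and a visible surjectivity proof. One small slip, which does not affect the argument: it is not true that $\hat t\in\{\min T,\max T\}$ forces the monotone subsequence to be eventually constant (a sequence may increase strictly to $\max T$), but this is also unnecessary, because \eqref{fenchel1} is available at $s=\max T$ and the intersection axiom at $s=\min T$, so your increasing and decreasing cases already cover the endpoints.
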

\begin{proof}
The family $\mathfrak{K}$ of all compact  convex subsets  of $\max
\mathfrak{F},$ with the
 Hausdorff distance is a complete metric space by Blaschke's selection theorem
  (see e.g. \cite{Bonnfen}). Let $K\in \mathfrak{K}$,
  $\Om^{(l)} \in \mathfrak{F}$ such that
\begin{equation}\label{limOm_n}
\lims_{l\to \infty}\dist (\Om^{(l)},K) =0.
\end{equation}
Let us show that $K\in \mathfrak{F}$. Let
$\Om_{t_1}, \Om_{t_2}$ as in the previous lemma.  If
$\dist(\Om_{t_2},K)=0$ then $K=\Om_{t_2}\in \mathfrak{F}$; similarly
if $\dist(\Om_{t_1},K)=0$ then $K=\Om_{t_1}\in \mathfrak{F}$. From
\eqref{limOm_n}, the case  that $\dist(\Om_{t_2},K)>0 $ and $\dist(\Om_{t_1},K)>0$ cannot occur.

The mean width parametrization  $w(K), K\in \mathfrak{F}$ is a
strictly increasing  from the connected strictly linearly ordered
set $\mathfrak{F}$ to  $W_{\mathfrak{F}}=[w(\min \mathfrak{F}),w(\max \mathfrak{F})]$; since the Hausdorff distance on the elements of
 $\mathfrak{F}$ and the  mean width distance (see \eqref{distw<distH}
 and proposition \ref{problemII}) are equivalent, then $w: \mathfrak{F} \to W_{\mathfrak{F}}$
  is a one to one, strictly increasing function and its inverse is a continuous parametrization
  of  $\mathfrak{F}$. If  $\mathfrak{F}$ is a convex stratification and  $w(\mathfrak{F})=[w(\min \mathfrak{F}),w(\max \mathfrak{F})]$ then   $\mathfrak{F}$,
  with the parameter $w$, is a quasi
  convex family. The final part of the theorem follows by lemma \ref{fenchel10}. 
\end{proof}

\begin{theorem}[of completeness]\label{repcomp} Let
$\mathfrak{F}$ be a convex stratification. Then, there exists a
connected quasi convex family $\mathfrak{G}$ containing
$\mathfrak{F}$ so that $\min \mathfrak{G}=\min \mathfrak{F}$, $\max \mathfrak{G}=\max \mathfrak{F}$.
\end{theorem}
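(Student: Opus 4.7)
The plan is to construct $\mathfrak{G}$ so that its mean-width image covers the whole interval $[w(\min\mathfrak{F}), w(\max\mathfrak{F})]$, and then invoke the final assertion of Theorem \ref{connectedimpliseregular}: any convex stratification whose mean-widths fill the corresponding interval is already a connected quasi convex family. Set $w_0 := w(\min\mathfrak{F})$, $w_1 := w(\max\mathfrak{F})$, and $W := w(\mathfrak{F})\subseteq[w_0,w_1]$; since $w$ is strictly monotone under inclusion of convex bodies, $W$ is in bijection with $\mathfrak{F}$, and the task reduces to extending $\mathfrak{F}$ by a convex body of mean width $t$ for each $t\in[w_0,w_1]\setminus W$, while preserving inclusion order, the minimum and the maximum.

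For each $t\in[w_0,w_1]$ I define the lower and upper envelopes
\begin{equation*}
\hat A_t := \overline{\bigcup_{Q\in\mathfrak{F},\,w(Q)\le t} Q},\qquad \hat B_t := \bigcap_{Q\in\mathfrak{F},\,w(Q)\ge t} Q,
\end{equation*}
which are convex bodies satisfying $\min\mathfrak{F}\subseteq\hat A_t\subseteq\hat B_t\subseteq\max\mathfrak{F}$. For $t\in W$ both envelopes coincide with the unique element of $\mathfrak{F}$ of mean width $t$. For $t\notin W$, if $w(\hat A_t)<w(\hat B_t)$ I interpolate by Minkowski combination
\begin{equation*}
\Omega_t := \lambda(t)\,\hat A_t + \bigl(1-\lambda(t)\bigr)\,\hat B_t,\qquad \lambda(t):=\frac{w(\hat B_t)-t}{w(\hat B_t)-w(\hat A_t)}\in[0,1],
\end{equation*}
and if $w(\hat A_t)=w(\hat B_t)$ then strict monotonicity of $w$ forces $\hat A_t=\hat B_t$ and I set $\Omega_t:=\hat A_t$. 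Put $\mathfrak{G}:=\{\Omega_t:t\in[w_0,w_1]\}$; the linearity \eqref{sommaH} of support functions under Minkowski sums yields $w(\Omega_t)=t$ in all cases, and the elementary chain $a=(1-\lambda)a+\lambda a\in(1-\lambda)\hat A_t+\lambda\hat B_t$ together with convexity of $\hat B_t$ shows $\hat A_t\subseteq\Omega_t\subseteq\hat B_t$. By construction $\min\mathfrak{G}=\min\mathfrak{F}$, $\max\mathfrak{G}=\max\mathfrak{F}$ and $\mathfrak{G}\supseteq\mathfrak{F}$.

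The main technical point, which I expect to require the most care, is the verification that $\mathfrak{G}$ is totally ordered by inclusion. For $t_1<t_2$ the argument splits: if some $s\in W$ satisfies $t_1\le s\le t_2$, the chain
\begin{equation*}
\Omega_{t_1}\subseteq\hat B_{t_1}\subseteq\Omega_s\subseteq\hat A_{t_2}\subseteq\Omega_{t_2}
\end{equation*}
is immediate from the definitions of $\hat A,\hat B$; the delicate case is when $t_1,t_2$ lie in the same maximal gap of $W$, so that $\hat A_{t_1}=\hat A_{t_2}=:A$ and $\hat B_{t_1}=\hat B_{t_2}=:B$ with $\lambda(t_1)>\lambda(t_2)$, and one must show the Minkowski monotonicity $\lambda_1 A+(1-\lambda_1)B\subseteq\lambda_2 A+(1-\lambda_2)B$ whenever $A\subseteq B$ and $1\ge\lambda_1>\lambda_2\ge 0$. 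This follows by the rewriting
\begin{equation*}
\lambda_1 a+(1-\lambda_1)b=\lambda_2 a+(1-\lambda_2)\Bigl(\tfrac{\lambda_1-\lambda_2}{1-\lambda_2}\,a+\tfrac{1-\lambda_1}{1-\lambda_2}\,b\Bigr),
\end{equation*}
whose inner convex combination lies in $B$ because $a\in A\subseteq B$ and $b\in B$. Once nesting is established, strict monotonicity of $w$ upgrades $\subseteq$ to $\subsetneq$ (as $w(\Omega_{t_1})=t_1\neq t_2=w(\Omega_{t_2})$), so $\mathfrak{G}$ is a convex stratification with $w(\mathfrak{G})=[w_0,w_1]$, and Theorem \ref{connectedimpliseregular} concludes that $\mathfrak{G}$ is a connected quasi convex family.
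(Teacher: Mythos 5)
Your proof is correct, and it ends the same way the paper does (reduce to the final assertion of Theorem \ref{connectedimpliseregular} by making the mean-width image a full interval), but the filling construction is genuinely different. The paper works in two stages: it first closes $\mathfrak{F}$ under Hausdorff limits (so that $w(\mathfrak{F})$ becomes closed, using the completeness established in Theorem \ref{connectedimpliseregular}), and then fills each residual open gap $(\tau_1,\tau_2)$ with the parallel-body interpolation \eqref{interpolation}, re-indexing the interpolants $A_\lambda$ by their mean width; that re-indexing tacitly relies on $\lambda\mapsto w(A_\lambda)$ being continuous and sweeping all of $[w(\Omega_{\tau_1}),w(\Omega_{\tau_2})]$. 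You instead define the envelopes $\hat A_t,\hat B_t$ (which absorbs the paper's closure step into the definition) and interpolate along the Minkowski segment $\lambda\hat A_t+(1-\lambda)\hat B_t$; since the mean width is affine in $\lambda$ by \eqref{sommaH}, the value $t$ is attained by an explicit $\lambda(t)$ with no intermediate-value argument, and your elementary computation showing $\lambda\mapsto \lambda A+(1-\lambda)B$ is decreasing under inclusion when $A\subseteq B$ plays exactly the role that nesting of parallel bodies plays in the paper. The one assertion you leave unjustified is $\lambda(t)\in[0,1]$, i.e. $w(\hat A_t)\le t\le w(\hat B_t)$: this requires noting that along the chain one can choose a monotone cofinal sequence and that $w$ is continuous under the resulting Hausdorff convergence (cf. \eqref{distw<distH}), so that $w(\hat A_t)=\sup\{w(Q):Q\in\mathfrak{F},\,w(Q)\le t\}$ and $w(\hat B_t)=\inf\{w(Q):Q\in\mathfrak{F},\,w(Q)\ge t\}$; the same remark is what makes your statements about $t\in W$ and about the degenerate case $\hat A_t=\hat B_t$ consistent with $w(\Omega_t)=t$. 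With that one line added, your argument is complete and, because the parameterization by mean width comes out explicitly, somewhat more economical than the paper's.
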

\begin{proof}
Let us parameterize the elements of the given family  $\mathfrak{F}$
by their mean width
 parameter $\tau=w(\Om_\tau)$, for $\Om_\tau \in \mathfrak{F}$. Let $\Sigma:=[w(\min \mathfrak{F}),w(\max \mathfrak{F})]$,
 then
$$\Om_{\tau_1} \subset \Om_{\tau_2}, \, \Om_{\tau_1} \neq \Om_{\tau_2}\text{\quad iff \quad }
 \tau_1 , \tau_2 \in w(\mathfrak{F}), \, \tau_1 < \tau_2.$$

If $\Sigma\setminus w(\mathfrak{F})=\emptyset$ the theorem is proved. If  $w(\mathfrak{F})$ is not closed, let $s \in cl(w(\mathfrak{F}))\setminus w(\mathfrak{F})$.  Let us add to $\mathfrak{F}$
 the convex body (that will be called $\Om_{s}$), obtained by limit of convex bodies  of
 $\mathfrak{F}$. This is well defined, since $\mathfrak{F}$ from the previous theorem
  is a subset  of the complete metric space of all compact
 convex subsets of $\max \mathfrak{F}$. Let us close $\mathfrak{F}$  according to this
  topology and let us call again $\mathfrak{F}$  the new completed family. The function $w$
  can be  extended in a continuous way to the augmented  family $\mathfrak{F}$. If $\Sigma\setminus w(\mathfrak{F})=\emptyset$ the theorem is proved. If not, 
 $w:\mathfrak{F} \to  \Sigma$ is a strictly increasing   continuous function and
  $\Sigma\setminus w(\mathfrak{F})$ is union of numerable relatively open intervals with  end points in
   $w(\mathfrak{F})$. Let
 $\tau\in \Sigma\setminus w(\mathfrak{F})$. Let $(\tau_1 , \tau_2)$ the maximal interval
   enclosed in $\Sigma\setminus w(\mathfrak{F})$ containing $\tau$. Then let us define for
   $\tau_1 < \la < \tau_2$ the interpolation between the convex sets
$\Om_{\tau_1},\Om_{\tau_2}$:
\begin{equation}\label{interpolation}
A_{\la}=\{ x\in \Om_{\tau_2}: \dist(x,\Om_{\tau_1})\leq \dist(\Om_{\tau_1},\Om_{\tau_2})
\frac{\la-\tau_1}{\tau_2-\tau_1}\}.
\end{equation}
The convex set $A_{\la}$ is the intersection between $\Om_{\tau_2}$ and the parallel
convex body to $\Om_{\tau_1}$, at  distance  ${(\la-\tau_1})/{(\tau_2-\tau_1)}$ from
$\Om_{\tau_1}$. For $\tau_1 < \tau < \tau_2$ let
$$\Om_\tau:=A_\la \text{\quad iff \quad } w(A_\la)=\tau,$$
and let us add these sets to the initial family. Let $\mathfrak{G}:=\{\Om_\tau\}_{\tau
 \in \Sigma}$ be the augmented family. $\mathfrak{G}$ is parameterized by its mean width parameter and 
$w(\mathfrak{G})=[w(\min \mathfrak{G}),w(\max \mathfrak{G})]$; then, by previous theorem, $\mathfrak{G}$ is a connected quasi convex family.
\end{proof}

\begin{definition}\label{defconvergg}Let $\mathfrak{K}$ be the space of all compact convex subsets  of $ \Om$ equipped with the Hausdorff distance.
Let $\mathfrak{G}^{(m)}=\{\Om_w^{(m)}\}_{w\in [w(\Om_0),w(\Om)]}$ a sequence of connected quasi convex
families parameterized by the mean width $w$, satisfying $ \min \mathfrak{G}^{(m)}=\Om_0$ and $ \max \mathfrak{G}^{(m)}=\Om$.
 Let us define 
$$\lims_{m\to \infty}\mathfrak{G}^{(m)}=\mathfrak{G}=\{\Om_w\}_{w\in [w(\Om_0),w(\Om)]}$$
if the continuous functions $w\to \Om_w^{(m)}$ uniformly converge to $w\to \Om_w $.
\end{definition}

\section{Steepest descent  curves  for quasi convex families}

\label{quasiconvexSDPcurves} Let $u$ be a smooth  function defined
in a convex body $\Om$.  Let $Du(x)\neq 0, \forall x\in \Om:  u(x) > \min u
$. A classical steepest descent curve of $u$ is  a rectifiable curve
$s\to x(s)$  solution to
$$ \dfrac{dx}{ds}= \dfrac{Du}{|Du|}(x(s)) $$
(some authors call them steepest descent curves with ascent parameter or steepest ascent
 curves). Classical steepest descent curves are the integral curves of a unit   field normal
 to the sub level sets  of the given function $u$. Here we are interested to convex
  sub levels, i.e. when $u$ is a quasi convex function. Let us consider the family of the
  sub level sets of $u$: $\Om_t=\{x\in \Om : u(x) \leq t\}$; let us notice that the family $\{\Om_t\}$ is a connected quasi convex family.
  
 Let us give now an extended definition 
  of a steepest descent curve related to a connected quasi convex family. 
\begin{definition}\label{defSEPclassical} Let $T$ be a closed real interval and let  $\{\Om_t\}_{t\in T}$
 be a connected quasi convex family. A continuous path  $t \to x(t)$ will be called a
   {\bf viable steepest descent curve} for $\{\Om_t\}_{t\in T}$ if
\begin{enumerate}
\item[(i)]$x(t) \in \pa_{rel}\Om_t $ $\quad  \forall t\in T\setminus\{\min T\}$;
\item[(ii)] $t\to x(t)$  is  a solution in $T$ of the  differential inclusion problem:
\end{enumerate}
\begin{equation}\label{defSDP}
\dot{x}(t) \in N_{\Om_t}(x(t)) \quad \text{a.e. in}  \quad T.
\end{equation}
$x(\max T)$ will be called the end point of the steepest
descent curve.
\end{definition}
Every self expanding path $\ga$ parameterized with arc length $s$ is a viable steepest descent curve for the family $\{co(\ga(s))\}, 0\leq s \leq ||\ga||$, see (\ref{inNco(ga(t))}).

For suitable quasi convex families is not possible  to get existence results of viable
 steepest descent curves, as  the following  example in $\RR^3$ shows.
\begin{ex}\label{exellipsoids} Let $E_t, t\in (1,2]$ be  the family of convex sets, defined by the rotations around the $x_3$ axis 
 of the following   plane sets:  union of the semicircles
$$(x_1-1)^2+x_3^2\leq \ (t-1)^2 ,\, x_1\geq 1  $$
and rectangles
$$|x_3|\leq t-1, \quad 0\leq x_1\leq 1.$$
Let $D_t, t\in [0,1]$ be the family of  circles in the plane $x_3=0$
$$ x_1^2+x_2^2 \leq t^2  .$$
\end{ex}
The  family $\{\Om_t\}_{t\in[0,2]}= \{D_t\}_{t\in
[0,1]}\bigcup\{E_t\}_{t\in (1,2]}$ is a connected quasi convex
family. Any viable  steepest descent curve $x(\cdot)$ of
$\{E_t\}_{t\in (1,2]}$ with end point $(\overline{x_1},\overline{x_2}, \pm 1)\in \partial E_2$
is a segment;  moreover if $\overline{x_1}^2+\overline{x_2}^2 < 1 $, then $x(\cdot)$ stops at time $t=1$ at a  point $x(1)\in \relint
D_1 $. Any continuous extension of $x(\cdot)$ to the interval $[0,1]$ as SEP
of $\{\Om_t\}_{t\in[0,2]}$ does not satisfy (i) in the definition
\ref{defSEPclassical} for $t\in (\tau, 1]$ with $\tau=|x(1)|^2$;
then it is not a  viable steepest descent curve.

Let us give a  definition  both extending     the viable steepest descent curves of the
definition \ref{defSEPclassical} and generalizing   the class of SEP of the previous section.
The aim of the following definition is to bind the natural order  structure of a
 quasi convex stratification with the natural order structure of an associated self
 expanding path.
\begin{definition}\label{defEC+} Let $\mathfrak{F}$ be a  convex stratification and
let $\ga$
 be a self expanding path enclosed in $\max \mathfrak{F}$. The couple $(\ga,\mathfrak{F})$
 will be  called an {\bf expanding  couple}  (EC) if:
\begin{enumerate}
\item[(i)] $\forall Q \in \mathfrak{F}, \ga \cap Q \neq \emptyset$,
\item[(ii)] $\ga \cap \pa_{rel} \max \mathfrak{F}\neq \emptyset$,
\item[(iii)] $\forall Q \in \mathfrak{F},\forall y \in Q, \forall  x \in \ga:
 x \not \in \relint Q ,    $

the properties
\begin{equation}\label{ineqdefEC+}
\forall x_1 \in \ga: x \prec x_1 \Rightarrow |x-y| \leq |x_1-y|
\end{equation}
hold.
\end{enumerate}
\end{definition}
\begin{rem}\label{EcinEC}Let $\mathfrak{F}$ be a convex stratification; let $(\ga, \mathfrak{G})$ be an EC
with $\mathfrak{F}\subset \mathfrak{G}$. Then $(\ga, \mathfrak{F})$ is  an EC too.
\end{rem}

\begin{theorem}\label{SACisEC}
If $x(\cdot)$ is a viable steepest descent
curve for a connected quasi convex family
$\mathfrak{G}=\{\Om_t\}_{t\in T}$ and  $t \to x(t)$ is absolutely continuous then, $(x(\cdot),\mathfrak{G})$
is
 an expanding  couple; however there exists a viable steepest descent
curve $\ga$ for  a connected quasi convex family $\mathfrak{G}=\{\Om_t\}_{t\in T}$, with $(\ga,\mathfrak{G})$ not expanding couple.

\end{theorem}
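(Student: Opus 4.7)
The plan is to verify, one by one, the three clauses of Definition~\ref{defEC+} for $(\ga,\mathfrak{G})$, where $\ga=x(T)$ carries the order inherited from the parameter. Clauses (i) and (ii) fall out of the viability requirement $x(t)\in\pa_{rel}\Om_t$: for each $Q=\Om_s$ we have $x(s)\in\pa_{rel}\Om_s\subseteq Q$, hence $x(s)\in\ga\cap Q$, and taking $s=\max T$ shows $x(\max T)\in\pa_{rel}\max\mathfrak{G}$.

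The substantive content lies in clause (iii). Given $Q=\Om_\tau$, $y\in Q$ and $x(t_0)\notin\relint Q$, introduce
\[
f(t):=|x(t)-y|^{2}.
\]
Absolute continuity transfers from $x$ to $f$, giving $\dot f(t)=2\langle x(t)-y,\dot x(t)\rangle$ almost everywhere. The inclusion $\dot x(t)\in N_{\Om_t}(x(t))$ yields, at every $t$ with $y\in\Om_t$, the inequality $\langle\dot x(t),y-x(t)\rangle\le 0$, hence $\dot f(t)\ge 0$. In the principal case $t_0\ge\tau$, one has $y\in\Om_\tau\subseteq\Om_t$ for all $t\in[t_0,t_1]$, so $f$ is nondecreasing on $[t_0,t_1]$ and integration yields $|x(t_0)-y|\le|x(t_1)-y|$.

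The remaining case $t_0<\tau$ is the delicate one: from $x(t_0)\in\Om_{t_0}\subseteq\Om_\tau$ and $x(t_0)\notin\relint\Om_\tau$ we have $x(t_0)\in\pa_{rel}\Om_\tau$, and $y\in\Om_\tau$ need not belong to $\Om_t$ for $t\in[t_0,\tau)$, so the sign of $\dot f$ is not immediate. I would proceed in two steps. First, applying the derivative argument with $y$ replaced by $x(t_0)$ (which does lie in $\Om_s$ for all $s\ge t_0$) shows that $|x(\cdot)-x(t_0)|$ is nondecreasing, so $\ga$ satisfies \eqref{IBDCmonlinearly} and is already a self-expanding path. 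Second, I would select a supporting halfspace $H$ of $\Om_\tau$ at $x(t_0)$ and use the connectedness of $\mathfrak{G}$ (Lemma~\ref{fenchel10}) together with the tangent/normal cone duality developed in Section~\ref{simplecapbody} to reduce the comparison $f(\tau)\ge f(t_0)$ to this halfspace framework; combining with the monotonicity on $[\tau,t_1]$ already established then closes the case. The delicate interplay between $N_{\Om_t}(x(t))$ for $t<\tau$ and the supporting hyperplane of $\Om_\tau$ at $x(t_0)$ is the main technical obstacle I foresee.

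For the second assertion I would construct an explicit counterexample based on Example~\ref{exellipsoids}. The family $\mathfrak{G}=\{D_t\}_{t\in[0,1]}\cup\{E_t\}_{t\in(1,2]}$ is a connected quasi convex family, but the natural viable steepest descent curve of $\{E_t\}_{t\in(1,2]}$ ending at an interior point of $D_1$ has no absolutely continuous extension to $[0,1]$ satisfying $x(t)\in\pa D_t$. Choosing a continuous but not absolutely continuous extension (for instance, gluing in a Cantor-type reparametrization of a rotating radial motion on $\pa D_t$) produces a continuous viable steepest descent curve whose image violates the monotonicity \eqref{IBDCmonlinearly}; the image is then not a self-expanding path and so $(\ga,\mathfrak{G})$ cannot be an expanding couple.
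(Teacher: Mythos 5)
Your treatment of the first assertion is incomplete exactly where you say it is, and the plan you sketch for the ``remaining case'' cannot be carried out. Your principal case $t_0\ge\tau$ is in fact the entirety of the paper's own proof: absolute continuity plus $\dot x(t)\in N_{\Om_t}(x(t))$ give that $t\mapsto|x(t)-y|^2$ is nondecreasing for $t\ge t'$ whenever $y\in\Om_{t'}$, i.e.\ the parameterized property \eqref{exco}, and the paper concludes from this alone. The case $t_0<\tau$ (an earlier curve point lying on $\pa_{rel}\Om_\tau$, tested against $y\in\Om_\tau\setminus\Om_{t_0}$) is not treated in the paper, and your halfspace/cone-duality strategy cannot close it, because the inequality you are after is not a consequence of the hypotheses: take the connected quasi convex family of half-disks $\Om_t=B(0,t)\cap\{x_2\ge0\}$, $t\in[a,1]$, and $x(t)=(t,0)$. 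This curve is smooth, $x(t)\in\pa\Om_t$, and $\dot x(t)=(1,0)\in N_{\Om_t}(x(t))$, so it is an absolutely continuous viable steepest descent curve; yet with $Q=\Om_\tau$, $x=x(t_0)\notin\relint Q$ for $t_0<\tau$, $y=(\tau,0)\in Q$ and $t_0<t_1<\tau$ one has $|x(t_1)-y|=\tau-t_1<\tau-t_0=|x(t_0)-y|$ (equivalently, $\ga\cap\pa\Om_1$ is a whole segment, which lemma \ref{lemmadiffinclusion} excludes for an EC). So clause (iii) of definition \ref{defEC+}, read for arbitrary $x\in\ga$ with $x\notin\relint Q$, can genuinely fail for such curves; what the differential inclusion yields, and what the paper's argument actually verifies, is (iii) tested at $x=x(t')$ with $y\in\Om_{t'}$, $t\ge t'$. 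You should therefore stop after your principal case, as the paper does, and state explicitly that this parameterized form of (iii) is what is being verified, rather than trying to prove the unparameterized case $t_0<\tau$.

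For the second assertion your construction does not work as described, and it misreads what example \ref{exellipsoids} shows. If the descending segment from $\pa E_2$ arrives at $t=1$ at an interior point of $D_1$, then no continuous extension whatsoever, absolutely continuous or not, can satisfy the viability requirement $x(t)\in\pa_{rel}D_t$ for $t<1$: continuity forces $x(1)$ to lie on the circle $\pa_{rel}D_1$, whereas it is an interior point. The obstruction there is clause (i) of definition \ref{defSEPclassical}, not absolute continuity, so ``choosing a continuous but not absolutely continuous extension'' cannot produce a viable steepest descent curve for that family. Your underlying idea can be salvaged: keep only the disk family $\{D_t\}_{t\in[0,1]}$ (or choose the end point on $\pa E_2$ with $\overline{x_1}^2+\overline{x_2}^2=1$) and set $x(t)=t(\cos\theta(t),\sin\theta(t),0)$ with $\theta$ a Cantor-type singular function of large total variation; then a.e.\ $\dot x(t)$ is radial, hence in $N_{D_t}(x(t))$, viability holds, and total rotation larger than $\pi$ yields three points violating \eqref{IBDCmonlinearly}, so $\ga$ is not a SEP and the couple is not an EC. But these verifications are precisely what your sketch omits. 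The paper's own counterexample is different and more economical: $\ga$ is the graph of the Cantor function $g$, paired with convex sets $\Om_t$ supported at $x(t)=(t,g(t))$ by the half-plane $\{x_1\le t\}$; there $\ga$ is a SEP (graph of a monotone function), and the failure of being an EC is exhibited directly on clause (iii) with the explicit points $x(2/3)$, $y=(2/3,1)$, $x(1)$.
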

\begin{proof}
Let us observe that   since $x(\cdot)$ is absolutely continuous then
$t \to |x(t)-y|^2 $  is
 not decreasing  if and only if $$ \langle \dot{x}(t), x(t)-y\rangle\geq 0 ,
\quad \forall y \in \Om_{t'},\, \mbox{a.e.} t \geq t'.$$ The previous inequality is equivalent to the differential inclusion    \eqref{defSDP}  of definition \ref{defSEPclassical}. Thus $(x(\cdot),\mathfrak{G})$
is an expanding  couple.

To construct an example of viable steepest descent curve $\ga$ associated to $\mathfrak{G}$ such that $(\ga, \mathfrak{G})$ is not an expanding couple,
let us consider the Cantor function $g: [0,1]\to [0,1]$, see \cite[p. 83]{Halmos}. Let $\ga$ be 
the graph of $g$ in  the $x_1,x_2$ coordinate plane. The parametrization of $\ga: x(t)=( x_1(t)=t, x_2(t)=g(t)), t\in [0,1]$ is not absolutely continuous.
Let
$$\Om_t=co(\{(x_1,x_2): 0\leq x_1\leq t, g(t)\leq x_2\leq 1\}) \quad \mbox{for \quad } t \in [0,1].$$
As the Hausdorff distance between $\Om_{t_1},\Om_{t_2}$ is $|t_1-t_2|$, then $\{\Om_t\}_{t\in T}$ is connected.
 $\dot{x}(t)=(1,0)$ exists
a.e in $[0,1]$  and $\dot{x}(t)\in N_{\Om_t}(x(t)) $ since the halfplane $\{x_1\geq t\}$ supports $\Om_t$ at $x(t) $.  Of course 
$x(t)=(t,g(t)) \in \pa\Om_t$, so $\ga$ is a viable
steepest descent curve for $\{\Om_t\}_{t\in [0,1]}$.  Let us notice now that 
$$x(2/3)=(2/3,1/2)\not\in \relint\Om_{2/3},$$
and let us consider $x(1)=(1,1), y\equiv(2/3,1)\in \Om_{2/3}$.
Then $(\ga,\mathfrak{G})$ is not an EC since 
$$|x(2/3)-y|=1/2 > |x(1)-y|=1/3$$
holds. \end{proof}
 To construct an expanding couple with the related curve which is not a viable steepest
descent curve, let us consider the  family
 $\{\Om_t\}_{t\in[0,2]}= \{D_t\}_{t\in [0,1]}\bigcup\{E_t\}_{t\in (1,2]}$ of the example
  \ref{exellipsoids}.  Let $x(\cdot)$ be the continuous path with end point
   $\overline{x}=(\overline{x_1},\overline{x_2},1)\in \partial E_2\bigcap \{ x_3=1\}\bigcap\{0< x_1^2+x_2^2 < 1\} $ defined as follows:
\begin{equation}\label{x(t)par}
 x(t)=\left \{ \begin{array}{l}
(\overline{x_1},\overline{x_2},t) \quad {t\in (1,2]}\\
 (\overline{x_1},\overline{x_2},1)\quad {t\in (\tau,1]} \mbox{ with } \tau=\overline{x_1}^2+\overline{x_2}^2> 0\\
 \frac{t}{\tau}\cdot x(\tau) \quad t\in [0,\tau].
\end{array}
\right.
\end{equation}

It is not difficult to see that the constructed curve together with the family of example
\ref{exellipsoids}  is an EC but $x(\cdot)$ is not a viable steepest descent curve.

\begin{theorem}\label{Ec char} Let $\mathfrak{F}$ be a  convex
stratification and let $\ga$ be
 a self expanding path enclosed in $\max \mathfrak{F}$. Assume that $(\ga,\mathfrak{F})$
 satisfies
 (i) and (ii) of the  definition \ref{defEC+} . The following two facts
 are equivalent.
  \begin{enumerate}
\item[(i)]
The couple $(\ga,\mathfrak{F})$  is an expanding  couple (EC);
\item[(ii)] $\forall Q \in \mathfrak{F},$ $ \forall \, x'\in \ga \cap
\pa_{rel} Q, $ $ \forall \, \ga_1 \subset Q,  \ga_1 $ a SEP
with endpoint $ x'$, then the set $ \ga_2 := \ga_1 \cup (\ga \setminus
\ga_{x'}) $ (linearly ordered starting with the first point of $
\ga_1 $) is a SEP.
 \end{enumerate}
\end{theorem}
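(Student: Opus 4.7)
The plan is to prove each implication of the equivalence by checking the three-point SEP inequality on the composite path $\ga_2 := \ga_1 \cup (\ga \setminus \ga_{x'})$ and using the EC property (respectively condition (ii)) to bridge distance comparisons between points of $\ga_1$ and points of $\ga \setminus \ga_{x'}$. Connectedness and closedness of $\ga_2$ follow from the fact that $x'$ is a limit point of $\ga \setminus \ga_{x'}$ through the continuous mean-width parametrization given by theorem \ref{geomSEPiv}, and the order on $\ga_2$ places the $\ga_1$-block before the $\ga$-tail, so the real content is the inequality.

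For (i) $\Rightarrow$ (ii) I fix $Q$, $x' \in \ga \cap \pa_{rel} Q$, and $\ga_1$, then verify $|p_2-p_1|\leq |p_3-p_1|$ for every triple $p_1\prec p_2\prec p_3$ in $\ga_2$ by four cases according to whether each $p_i$ lies in $\ga_1$ or in $\ga \setminus \ga_{x'}$. The uniform cases follow at once from the SEP properties of $\ga_1$ and of $\ga$. For $p_1,p_2\in\ga_1$ and $p_3\in\ga\setminus\ga_{x'}$, SEP of $\ga_1$ gives $|p_2-p_1|\leq |x'-p_1|$ and EC applied with $y=p_1\in Q$ and $x=x'\not\in\relint Q$ lifts this to $|x'-p_1|\leq |p_3-p_1|$. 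The delicate case is $p_1\in\ga_1$, $p_2,p_3\in\ga\setminus\ga_{x'}$: here I first show $p_2\not\in\relint Q$ by contradiction (if $p_2$ were in $\relint Q\subset Q$, then EC with $y=p_2$, $x=x'$, $x_1=p_2$ would force $|x'-p_2|\leq 0$, against $x'\prec p_2$), and then EC applied with $y=p_1$, $x=p_2$, $x_1=p_3$ yields the inequality directly.

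For (ii) $\Rightarrow$ (i) I check condition (iii) of definition \ref{defEC+}. Given $Q,y,x,x_1$ with $x\not\in\relint Q$ and $x\prec x_1$, the core construction in the case $x\in\pa_{rel} Q$ is to take $\ga_1=[y,x]$, the straight segment from $y$ to $x$ with its natural linear order: it is contained in $Q$ by convexity and is trivially a SEP with endpoint $x$, so (ii) asserts that $\ga_2=[y,x]\cup(\ga\setminus\ga_{x})$ is a SEP, and since $y\prec x\prec x_1$ in $\ga_2$ the SEP inequality reads exactly $|x-y|\leq |x_1-y|$. When $x\not\in Q$ but $\ga_x\cap Q\neq\emptyset$, continuity of the mean-width parametrization and closedness of $Q$ produce an $x'\in\ga\cap\pa_{rel} Q$ with $x'\prec x$, and applying (ii) with $\ga_1=[y,x']$ again gives $y\prec x'\prec x\prec x_1$ in $\ga_2$, hence the inequality. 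The main obstacle I foresee is the residual subcase $x\not\in Q$ with $\ga_x\cap Q=\emptyset$, where no admissible $x'\prec x$ exists in $\pa_{rel} Q\cap\ga$ and $x$ is necessarily excluded from any $\ga_2$ built at an $x'\succ x$. To close this case I intend either to exploit the chain structure of $\mathfrak{F}$ (replacing $Q$ by the smallest $\tilde Q\in\mathfrak{F}$ carrying $x$ on its relative boundary, which reduces to the boundary case already handled) or to combine the SEP of $\ga$, which gives $|x-z|\leq |x_1-z|$ for every $z\in\ga_x$, with lemma \ref{lemmaecissec} in order to transport the inequality from $\co(\ga_x)$ to the point $y\in Q$.
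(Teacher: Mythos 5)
Your direction (i)$\Rightarrow$(ii) is correct and is essentially the paper's own argument: the same case split according to how the triple $p_1\prec p_2\prec p_3$ distributes between $\ga_1$ and $\ga\setminus\ga_{x'}$, with the mixed cases settled by \eqref{ineqdefEC+}; your explicit check that $p_2\notin\relint Q$ (taking $y=p_2$, $x=x'$, $x_1=p_2$ in \eqref{ineqdefEC+}) is a step the paper leaves implicit, and it is right. Likewise the core of your (ii)$\Rightarrow$(i) --- take $\ga_1$ to be the segment from $y$ to a point of $\ga\cap\pa_{rel}Q$ and read \eqref{ineqdefEC+} off the triple $y\prec x\prec x_1$ inside $\ga_2$ --- is exactly the paper's construction (the paper applies it once, at $x'=\max(\ga\cap\pa_{rel}Q)$).

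The gap lies in the remaining cases of (ii)$\Rightarrow$(i), and it is genuine. First, your claim that $x\notin Q$ together with $\ga_x\cap Q\neq\emptyset$ produces some $x'\in\ga\cap\pa_{rel}Q$ with $x'\prec x$ fails when $\dim \mathit{Aff}(Q)<n$ (which the paper expressly allows): $\ga$ can leave such a $Q$ through $\relint Q$ without ever meeting $\pa_{rel}Q$. Second, neither of your two repairs of the residual subcase works: $\mathfrak{F}$ is only a chain, so a smallest $\tilde Q\in\mathfrak{F}$ with $x\in\pa_{rel}\tilde Q$ need not exist (in general $x$ lies on the relative boundary of no member at all), and lemma \ref{lemmaecissec} transports the monotonicity of $t\mapsto |x(t)-z|$ only to points $z\in co(\ga_x)$, whereas the needed $y\in Q$ is in general outside $co(\ga_x)$ --- indeed in this very subcase $\ga_x\cap Q=\emptyset$. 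In fact no argument can close this subcase: in $\RR^2$ take $\mathfrak{F}=\{Q,K\}$ with $Q$ the closed unit disc and $K$ the closed disc of radius $100$ centred at the origin, and let $\ga$ be the vertical segment from $(1,-5)$ to $(1,0)$ followed by the horizontal segment from $(1,0)$ to $(100,0)$. Then $\ga$ is a SEP contained in $K$, (i) and (ii) of definition \ref{defEC+} hold, and condition (ii) of the theorem holds (one checks $\ga\cap\pa_{rel}Q=\{(1,0)\}$, the part of $\ga$ after $(1,0)$ has nondecreasing distance from every point of $Q$, and $\ga$ meets $\pa K$ only at its last point), yet \eqref{ineqdefEC+} fails for $y=(0,0)$, $x=(1,-5)$, $x_1=(1,0)$. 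So condition (ii) yields \eqref{ineqdefEC+} only for those $x$ which are preceded by (or equal to) a point of $\ga\cap\pa_{rel}Q$; this is also all that the paper's own two-line proof establishes, since it applies (ii) only at $\max(\ga\cap\pa_{rel}Q)$. Your instinct that the residual configuration is the obstacle is therefore sound, but it cannot be proved away: it has to be excluded (as the paper's argument implicitly does) rather than handled.
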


\begin{proof}(i)$ \Rightarrow $(ii)

Let $ x_1, x_2, x_3 \in \ga_2 $ with $  x_1 \prec x_2\prec x_3. $ If
 $ x_1, x_2, x_3 \in \ga_1 $ or  $ x_1, x_2, x_3 \in \ga \setminus
\ga_{x'},$ then  (\ref{IBDCmonlinearly}) holds. If $  x_1 \in \ga_1
\subset Q $ and $ x_2, x_3 \in \ga \setminus \ga_{x'},$ then
(\ref{IBDCmonlinearly}) follows from (\ref{ineqdefEC+}). If
 $ x_1, x_2 \in \ga_1 $ and $ x_3 \in \ga \setminus \ga_{x'},$
 then $ |x_1-x_2| \leq |x_1-x'|; $ as $ x_1 \in Q, $ by (\ref{ineqdefEC+})
$ |x_1-x'| \leq |x_1-x_3|; $ the last two inequalities imply
(\ref{IBDCmonlinearly}). So $\ga_2 $ is a SEP.

(ii)$ \Rightarrow $(i)

Let $ Q \in \mathfrak{F}, \; y\in Q, $ $\max( \ga \cap \pa_{rel} Q)
= x'. $ Let $ \ga_1 $ be the segment $ y x' $ and $ \ga_2 := \ga_1
\cup (\ga \setminus \ga_{x'}): $ as $ \ga_2 $ is a SEP then
(\ref{ineqdefEC+}) holds.
\end{proof}

\begin{theorem}\label{uovo}
Let $K_1\subset K_2$ be convex bodies in $\rn$ and let $\mathfrak{F}$ be a convex
stratification with minimum and maximum sets $K_1,K_2$ respectively. Let
 $(\ga,\mathfrak{F})$ be an expanding couple, then
\begin{enumerate}
\item[(i)] let $  c^{(1)}_n $ be the constant in theorem
\ref{selfsecrettifiable};
then
\begin{equation}\label{uovocolombo}
 (2 \, c^{(1)}_n)^{-1} \; ||\ga\setminus K_1||\leq \dist (K_1,K_2);
\end{equation}
\item[(ii)] there exists a constant $c$ depending on the diameter  of $K_2$ such that
the bound
\begin{equation}\label{uovocolombo2}
||\ga\setminus K_1||\leq c (w(K_2)-w(K_1))^{1/p}
\end{equation}
holds for $p=n$;
\item[(iii)] when $n>1, $  for any $p\geq 1,$ does not exist a constant $c, $ not depending on
$K_2,$ for which
 \eqref{uovocolombo2} holds.
\end{enumerate}
\end{theorem}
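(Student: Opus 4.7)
My plan for (i) is to reduce $\ga\setminus K_1$ to a single SEP ``tail'' $\tilde\ga$ and then to bound $\diam(\tilde\ga)$ by $2\dist(K_1,K_2)$ using the expanding-couple property. First I will argue that $\ga$ starts inside $K_1$ and never re-enters it once it has left. Indeed, if $\min\ga\notin K_1$, applying definition \ref{defEC+}(iii) with $Q=K_1$, $x=\min\ga$ (which is not in $\relint K_1$), any $x_1\in\ga\cap K_1$ (nonempty by EC (i)), and $y:=x_1\in K_1$ forces $|\min\ga-x_1|\leq |x_1-x_1|=0$, a contradiction; an analogous argument, with $y$ taken to be a would-be re-entry point $b$ and $x$ the prior exit point, forces $b$ to coincide with the exit point. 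Hence $\ga\cap K_1$ is an order-initial arc of $\ga$ terminating at a unique point $q_2$, and $\tilde\ga:=\{x\in\ga:q_2\preceq x\}$ is a SEP contained in $K_2$ with $||\tilde\ga||=||\ga\setminus K_1||$.

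Set $D:=\dist(K_1,K_2)$. For any two points $x\prec x_1$ of $\tilde\ga$ with $x\neq q_2$, one has $x\in\ga\setminus K_1$, hence $x\notin\relint K_1$; taking $y:=\pi(x_1)$, the metric projection of $x_1$ onto $K_1$, definition \ref{defEC+}(iii) yields
$$|x-\pi(x_1)|\;\leq\;|x_1-\pi(x_1)|\;=\;\dist(x_1,K_1)\;\leq\;D,$$
the last inequality coming from $x_1\in K_2\subset K_1+D\,\B$. The triangle inequality then gives $|x-x_1|\leq 2D$; the case $x=q_2$ follows by continuity (using the mean-width parametrization of theorem \ref{geomSEPiv}). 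Hence $\diam(\tilde\ga)\leq 2D$, so $w(\co(\tilde\ga))\leq\diam(\co(\tilde\ga))\leq 2D$, and theorem \ref{selfsecrettifiable} gives $||\ga\setminus K_1||=||\tilde\ga||\leq c^{(1)}_n\,w(\co(\tilde\ga))\leq 2c^{(1)}_n D$, which is (i). Part (ii) follows at once from (i) combined with proposition \ref{problemII}, choosing $c:=2c^{(1)}_n(\diam(K_2)^{n-1}/c^{(0)}_n)^{1/n}$.

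For (iii), I will exhibit a family of expanding couples whose ratio $||\ga\setminus K_1||/(w(K_2)-w(K_1))^{1/p}$ blows up. Fix $L,D>0$ and, for $\eta>0$, set $K_1^{(\eta)}:=[-L,L]\times[-\eta,\eta]^{n-1}$, $q:=(L+D,0,\ldots,0)$, $K_2^{(\eta)}:=\co(K_1^{(\eta)}\cup\{q\})$, and $\ga^{(\eta)}$ the segment from $(L,0,\ldots,0)$ to $q$. A direct check (the interior of the segment lies in $\inte K_2^{(\eta)}$, and $|x-y|^2$ is monotone along the segment for every $y\in K_1^{(\eta)}$) shows that $(\ga^{(\eta)},\{K_1^{(\eta)},K_2^{(\eta)}\})$ is an EC with $||\ga^{(\eta)}\setminus K_1^{(\eta)}||=D$. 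Computing support functions gives $h_{K_2^{(\eta)}}(\theta)-h_{K_1^{(\eta)}}(\theta)=\max(0,D\theta_1-\eta\sum_{i\geq 2}|\theta_i|)$; integrating over the shrinking spherical cap near $e_1$ where this integrand is positive shows $w(K_2^{(\eta)})-w(K_1^{(\eta)})=O(D^n/\eta^{n-1})$ as $\eta\to\infty$. Hence the relevant ratio grows like $\eta^{(n-1)/p}$, which diverges for every $p\geq 1$ (using $n>1$), proving (iii).

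The main delicate point is the structural result in (i) that $\ga$ exits $K_1$ only once, permitting the reduction to the single tail $\tilde\ga$ on which the diameter estimate can be applied uniformly; once this is in place, the rest of (i) and (ii) are a clean combination of the diameter bound with theorem \ref{selfsecrettifiable} and proposition \ref{problemII}, and (iii) reduces to the explicit ``long box with a spike'' computation.
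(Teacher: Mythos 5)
Your proposal is correct, and for parts (ii) and (iii) it essentially coincides with the paper's argument: (ii) is exactly (i) combined with proposition \ref{problemII}, and (iii) exploits the same ``spike attached to an increasingly elongated flat body'' phenomenon (the paper works in $\RR^2$ with a segment $K_{1,\nu}$ of growing length and a gap $1/\nu$ tending to zero, observing, as you do, that the nearest-point segment together with the trivial stratification $\{K_1,K_2\}$ is an EC; you keep the gap $D$ fixed and let the cross-section $\eta$ grow, which works just as well). For (i), however, your mechanism is genuinely different. The paper projects the last point $x$ of $\ga$ onto $K_1$ to obtain $x'$, invokes theorem \ref{Ec char} to prepend to $\ga$ a segment inside $K_1$ so that the resulting SEP starts at $x'$, and then applies the self-expanding inequality \eqref{IBDCmonlinearly} anchored at this new first point to trap the curve in $B(x',|x-x'|)$, giving $w(\co(\ga))\le 2\dist(K_1,K_2)$. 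You never extend the curve: you first deduce directly from definition \ref{defEC+}(iii) (with $y$ a point of $\ga\cap K_1$) that $\ga$ starts in $K_1$ and cannot re-enter it, so $\ga\setminus K_1$ is a terminal subarc, and you then bound its diameter by $2\dist(K_1,K_2)$ by choosing $y$ to be the metric projection of the later point onto $K_1$; since the mean width is at most the diameter, theorem \ref{selfsecrettifiable} finishes as in the paper. Your route is more self-contained (it bypasses theorem \ref{Ec char} entirely) and it makes explicit the ``exit only once'' structure that the paper leaves implicit; in particular it does not require a point of $\ga\cap\pa_{rel}K_1$ to exist, a detail the paper's appeal to theorem \ref{Ec char} glosses over. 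The paper's extension trick, conversely, recycles machinery already established and yields the ball containment in one stroke. Both arguments produce the same constant $2\,c^{(1)}_n$.
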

\begin{proof}
Let $x$ be the last point of $\ga$ and $x'$ be the projection of $x$ onto $K_1$, then
\begin{equation}\label{distprojection}
|x-x'| \leq \dist(K_1,K_2).
\end{equation}
The SEP $\ga$, by theorem \ref{Ec char}, can be extended  and made
it starting in $x'\in K_1$. Then
$$\ga \supseteq\ga \setminus K_1;$$
 by the monotonicity property of $\ga$
$$co(\ga)\subseteq B(x',|x-x'|).$$
Then  $ ||\ga \setminus K_1||\leq ||\ga || $ and by
\eqref{||ga||<=c(n)}, \eqref{distprojection}
$$
||\ga ||\leq \, c^{(1)}_n \, w(\mbox{co}(\ga ))
 \leq \, c^{(1)}_n \, w(B(x',|x-x'|)\leq \, 2 \, c^{(1)}_n \, \dist(K_1,K_2).
 $$
This proves (i). Inequality \eqref{uovocolombo2} follows immediately from (i) and
inequality \eqref{al=1/2}.

Let us observe now that, for any couple of nested convex bodies
$K_1,K_2,$ the segment joining the points  $x_i\in K_i,$
for $i=1,2$ such that $|x_2-x_1| =\dist(K_2,K_1)$ is
a special SEP which together with the trivial convex stratification
$\{K_i\}_{i=1,2}$ is an expanding couple. So in order to prove (iii)
it is enough to show that there exists a
 sequence of couples of
nested convex bodies $\emptyset \neq K_{1,\nu} \subset K_{2,\nu} $
of $\rn$ satisfying
$$\dfrac{\dist(K_{1,\nu} ,
 K_{2,\nu}) }{(w(K_{2,\nu})-w(K_{1,\nu} ))^{1/p} }\to \infty \quad
 \text{as} \quad \nu \to \infty.$$
The  example will be given in $\RR^2;$ however, it could be easily
adapted to $\rn$. Let $K_{1,\nu}$ be a family of segments of length
$\alpha_{\nu}/\nu$, where $\alpha_\nu $ is a suitable positive real
sequence to be determined in the sequel. Let us choose on the  axis
of $K_{1,\nu}$ a point $p_\nu $ of distance $1/\nu$ from
$K_{1,\nu}$. Let
$$K_{2,\nu}=co(K_{1,\nu}\cup \{p_\nu\}).$$
Then
$$\dist(K_{1,\nu},K_{2,\nu})=\dfrac{1}{\nu}, \quad w(K_{2,\nu})-w(K_{1,\nu})
=\frac{1}{\pi \nu}(\sqrt{4+\alpha_\nu^{2}}-\alpha_\nu).$$
The ratio
$$\dfrac{\dist(K_{1,\nu} ,
 K_{2,\nu}) }{(w(K_{2,\nu})-w(K_{1,\nu} ))^{1/p} } = \nu^{\frac{1}{p}-1}
\pi^{1/p} (\sqrt{4+\alpha_\nu^{2}}-\alpha_\nu)^{-1/p}$$ is unbounded
for $\nu \to \infty$  as $\alpha_\nu=\nu^q$, with $q >1$, $q/p
>1-1/p$.
\end{proof}

\begin{lemma}\label{lemmadiffinclusion} Let $(\ga,\mathfrak{F})$  be an expanding couple and
let $ Q \in \mathfrak{F}.  $ Then
 $ \ga \cap  \pa_{rel}Q $ is at most one point.
\end{lemma}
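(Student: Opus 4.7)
The plan is to argue by contradiction: suppose there exist two distinct points $x_1, x_2 \in \ga \cap \pa_{rel} Q$ with, say, $x_1 \prec x_2$. I would then invoke directly condition (iii) of definition \ref{defEC+} applied to the single pair consisting of the earlier point $x_1$ (as the curve point $x$) and the later point $x_2$ (as the reference point $y$), exploiting the fact that both points simultaneously belong to $\pa_{rel} Q$.

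More precisely: since $x_2 \in \pa_{rel} Q \subset Q$, the point $y := x_2$ is an admissible choice in (iii). Since $x_1 \in \pa_{rel} Q$, it is not in $\relint Q$, so $x := x_1$ is also admissible. With $x \prec x_1'$ taken as $x_1 \prec x_2$, inequality (\ref{ineqdefEC+}) reads
\begin{equation*}
|x_1 - x_2| \;\leq\; |x_2 - x_2| \;=\; 0,
\end{equation*}
forcing $x_1 = x_2$. This contradicts the strict ordering $x_1 \prec x_2$, which by definition of the linear strict order implies $x_1 \neq x_2$.

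Hence no two distinct points of $\ga$ can lie simultaneously on $\pa_{rel} Q$, and $\ga \cap \pa_{rel} Q$ contains at most one point. There is no technical obstacle here: the whole strength of the argument is packed into the expanding couple condition, which was designed precisely to bind the order on $\ga$ with the stratification; once one identifies the right substitution $y = x_2$ the conclusion follows in one line, and no appeal to the self expanding property of $\ga$ or to the geometry of cap bodies is needed.
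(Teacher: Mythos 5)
Your proof is correct and takes essentially the same route as the paper: both derive an immediate contradiction from condition (iii) of definition \ref{defEC+} applied to two ordered points $x_1\prec x_2$ of $\ga\cap\pa_{rel}Q$, differing only in the choice of witness $y$ (the paper takes $y=\frac{x_1+2x_2}{3}$ on the segment, you take $y=x_2$ itself, which is admissible since $Q$ is compact and hence $\pa_{rel}Q\subset Q$). No gap; the argument is complete.
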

\begin{proof}
 The example \ref{exellipsoids} shows that  $ \ga \cap  \pa_{rel}Q
$ may be empty. Assume that there are $x, x_1 \in \ga \cap
\pa_{rel}Q,$ $ x \prec x_1.   $ Choose $ y = \frac{x + 2 x_1}{3} \in
Q; $ then (\ref{ineqdefEC+}) does not hold. 
\end{proof}

The following is a theorem of completeness of EC.
\begin{theorem}\label{ECvsPEC} Let  $(\ga,\mathfrak{F})$ be an expanding  couple and let
$\Sigma=[w(\min \mathfrak{F}),w(\max \mathfrak{F})]$; then, 
 there exists   a connected quasi convex family (parameterized with respect to the mean width) $\mathfrak{G}=\{\Om_t\}_{t\in \Sigma},$
   containing the family $\mathfrak{F},$  and a continuous parametrization of $ \ga$: $\Sigma \ni t \to x(t) \in \ga$,
     with the properties:
\begin{enumerate}
\item[(i)] the couple $(\ga,\mathfrak{G})$ is an expanding couple with $\min \mathfrak{F}
=\min \mathfrak{G}$, $\max \mathfrak{F}
=\max \mathfrak{G}$;
\item[(ii)] $x(\cdot)$ is a continuous map from $\Sigma  \to \mathfrak{G}$;
\item[(iii)] for all $t\in \Sigma$ the point $x(t)\in \Om_t$; moreover $t'<t'', x(t')\neq x(t'')$ imply $x(t'')\not \in\Om_{t'}$;
\item[(iv)] $\forall \, t' \in [\min \Sigma, \max \Sigma),\forall y\in \Om_{t'}, $ the real function
$ t \to |x(t)-y|^2 $ is  not decreasing for $t\in (t',  \max
\Sigma]$.
\end{enumerate}
\end{theorem}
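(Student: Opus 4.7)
The strategy is to extend $\mathfrak{F}$ to a connected quasi convex family via theorem \ref{repcomp}, to parametrize $\ga$ by ``last entry'' points into the augmented sets, and then to upgrade the EC distance inequality from $\mathfrak{F}$ to the completion by a Hausdorff approximation argument.

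As a first step I apply theorem \ref{repcomp} to obtain a connected quasi convex family $\mathfrak{G}=\{\Om_t\}_{t\in\Sigma}\supseteq\mathfrak{F}$ with $\min\mathfrak{G}=\min\mathfrak{F}$ and $\max\mathfrak{G}=\max\mathfrak{F}$, parametrized continuously by its mean width (theorem \ref{connectedimpliseregular}). Using the continuous mean-width parametrization $y(s)$ of $\ga$ from theorem \ref{geomSEPiv}, I define
\[
s^{*}(t):=\max\{s:y(s)\in\Om_t\},\qquad x(t):=y(s^{*}(t)).
\]
For this to make sense I need $\ga\cap\Om_t\ne\emptyset$ for every $t\in\Sigma$: if $\Om_t\in\mathfrak{F}$ the EC hypothesis gives this; if $\Om_t$ is a Hausdorff limit of $\Om_{\tau_k}\in\mathfrak{F}$, compactness of $\ga$ together with the identity $\Om_t=\bigcap_{t'>t}\Om_{t'}$ from definition \ref{defquasiconvexfamily} produces a point of $\ga$ in $\Om_t$; if $\Om_t=A_\lambda$ is an interpolation \eqref{interpolation} then $\Om_t\supseteq\Om_{\tau_1}\in\mathfrak{F}$ settles it. This also verifies conditions (i)--(ii) of definition \ref{defEC+} for $(\ga,\mathfrak{G})$. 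Property (iii) of the theorem is then a consequence of maximality: if $x(t_1)\ne x(t_2)$ for $t_1<t_2$, then $x(t_2)\succ x(t_1)$ and the inclusion $x(t_2)\in\Om_{t_1}$ would contradict the maximality defining $x(t_1)$. Upper semi-continuity of $s^{*}$ follows from closedness of the graph $\{(s,t):y(s)\in\Om_t\}$, and right-continuity from $\Om_t=\bigcap_{t'>t}\Om_{t'}$.

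The main obstacle is condition (iii) of definition \ref{defEC+} (the distance inequality) for the larger family $\mathfrak{G}$. Given $\Om_t\in\mathfrak{G}$, $y\in\Om_t$ and $x\in\ga$ with $x\notin\relint\Om_t$, I approximate $\Om_t$ from below by $\Om_{\tau_k}\in\mathfrak{F}$ with $\Om_{\tau_k}\subseteq\Om_t$ and $\Om_{\tau_k}\to\Om_t$ in Hausdorff distance, and I pick $y_k\in\Om_{\tau_k}$ with $y_k\to y$. When $\Om_t$ is a limit of $\mathfrak{F}$-elements such a sequence is built into the completion; the more delicate interpolation case requires splitting $y\in A_\lambda$ according to whether $y\in\Om_{\tau_1}$ (in which case EC for $\Om_{\tau_1}$ applies directly, because $x\in\relint\Om_{\tau_1}$ would entail $x\in\relint A_\lambda$ by inclusion of same-dimensional bodies) or $y\in A_\lambda\setminus\Om_{\tau_1}$ (handled via the geometry of the parallel body together with EC for $\Om_{\tau_1}$). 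In every case EC for $\mathfrak{F}$ gives $|x-y_k|\le|x_1-y_k|$ for $x_1\succ x$, and the limit yields the inequality on $\Om_t$. Once EC for $\mathfrak{G}$ is in hand, property (iv) follows quickly: for $t'<t_1<t_2$ with $x(t_1)\ne x(t_2)$, continuity of $\ga$ past $x(t_1)$ together with maximality of $s^{*}(t_1)$ forces $x(t_1)\in\pa_{rel}\Om_{t_1}$, and EC for $\mathfrak{G}$ with $Q=\Om_{t_1}\supseteq\Om_{t'}\ni y$ delivers $|x(t_1)-y|\le|x(t_2)-y|$. Finally, left-continuity of $s^{*}$---and hence the continuity in (ii)---is obtained by the same approximation scheme: a left jump would force an arc of $\ga$ to enter $\Om_t$ only at the limiting parameter, contradicting the EC-based monotonicity.
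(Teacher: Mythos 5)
Your plan breaks down at the step you yourself flag as "the main obstacle": proving condition (iii) of definition \ref{defEC+} for the sets produced by theorem \ref{repcomp}. The difficulty is not technical but substantive, because the statement you are trying to prove there is false: if you complete $\mathfrak{F}$ by the curve-independent interpolation \eqref{interpolation}, the couple $(\ga,\mathfrak{G})$ need not be an expanding couple at all. Concretely, in $\R^2$ take $\mathfrak{F}=\{K_1,K_2\}$ with $K_1=\{(0,0)\}$ and $K_2$ a disk of large radius passing through $(1,2)$ and containing the unit disk $B((0,0),1)$, and let $\ga$ be the polyline from $(0,0)$ to $(1,0)$ to $(1,2)$. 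One checks directly that $\ga$ is a SEP and that $(\ga,\mathfrak{F})$ is an EC (the distance from the origin is nondecreasing along $\ga$, and the condition for $Q=K_2$ is vacuous at the endpoint). The completion of theorem \ref{repcomp} interpolates between $K_1$ and $K_2$ by the sets $A_\la=\{x\in K_2:\dist(x,K_1)\le c_\la\}$, i.e.\ by disks centered at the origin; in particular $Q=B((0,0),1)\in\mathfrak{G}$. Now $x=(1,0)\in\ga$, $x\notin\relint Q$, and for $y=(\cos\al,\sin\al)\in Q$ with $\al>0$ small, the later point $x_1=(1,\sin\al)\succ x$ satisfies $|x_1-y|=1-\cos\al<|x-y|=\sqrt{2(1-\cos\al)}$, violating \eqref{ineqdefEC+}. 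The same computation shows that property (iv) of the theorem fails for any parametrization compatible with this $\mathfrak{G}$, so the defect is in the construction, not merely in your argument for it: no approximation scheme ("geometry of the parallel body together with EC for $\Om_{\tau_1}$") can rescue the interpolation case. Note also that your approximation-from-below device is unavailable even in the easier cases you invoke it for: the largest element of $\mathfrak{F}$ contained in an interpolated set $A_\la$ is $\Om_{\tau_1}$, which stays at positive Hausdorff distance from $A_\la$, and a set added as a decreasing limit of $\mathfrak{F}$-elements likewise need not be approximable from below inside $\mathfrak{F}$.

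This is exactly why the paper does not use theorem \ref{repcomp} as a black box here. Its completion is adapted to the curve: after adding Hausdorff limits, in every gap $(\tau_1,\tau_2)$ of $w(\mathfrak{F})$ in which $\ga$ meets $\relint(Q_{\tau_2}\setminus Q_{\tau_1})$ it inserts the sets $co(Q_{\tau_1}\cup\ga_x)$ for $x$ in that portion of $\ga$; for these sets \eqref{ineqdefEC+} follows from the SEP property of $\ga$ together with the EC property relative to $Q_{\tau_1}$ (in the spirit of lemma \ref{lemmaecissec} and theorem \ref{Ec char}), which is precisely what the parallel bodies $A_\la$ lack. The interpolation \eqref{interpolation} is used only in the residual gaps where $\ga$ does not enter the open annulus and, by lemma \ref{lemmadiffinclusion}, meets $\pa_{rel}Q_{\tau_1}\cap\pa_{rel}Q_{\tau_2}$ in a single point, where the EC condition can be verified. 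Your parametrization by last-entry points and your derivations of (ii)--(iv) are reasonable once a correct $\mathfrak{G}$ is in hand, but as written the proof fails at its central step.
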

\begin{proof} Let us parameterize the elements  $Q$ of $\mathfrak{F}$ by  their mean
widths. The family
 $ \mathfrak{F}$ can be augmented to a convex stratification ${\mathfrak{F_1}}$, adding   the  sets
$$Q_s:=\lims_{\tau \to s, \tau \in w(\mathfrak{F})}Q_\tau, $$
if they are not present.
Then,  the couple $(\ga,{\mathfrak{F}_1})$ is still an expanding
 couple and
  $w(\mathfrak{F}_1)$ is closed. If ${\mathfrak{F}_1}$ it is not a connected quasi convex family,
 let us augment it in the following way. Let us consider the mean width function  $w$
  mapping $\mathfrak{F}_1$ to    a subset   $w(\mathfrak{F}_1)$ of $\Sigma$.  Let
 $\tau\in \Sigma \setminus w(\mathfrak{F}_1)$. Let $(\tau_1 , \tau_2)$ the maximal interval
  enclosed in $\Sigma\setminus w(\mathfrak{F}_1)$ containing $\tau$. Then let us consider
  the annulus between the convex sets
$Q_{\tau_1},Q_{\tau_2}\in \mathfrak{F}_1$. Let us assume    that
$$\ga_{\tau_1,\tau_2}:=\ga \cap \relint (Q_{\tau_2}\setminus  Q_{\tau_1})\neq \emptyset;$$
then $\ga_{\tau_1,\tau_2}$ is not a single point; let us complete
the stratification ${\mathfrak{F}_1}$ between
$Q_{\tau_1},Q_{\tau_2}$ in the following way: for  any $x \in
\ga_{\tau_1,\tau_2}$ let us add to ${\mathfrak{F}_1}$ the set
$co(Q_{\tau_1}\cup\ga_x)$. Let $\mathfrak{F}_2$ the augmented family
so obtained. Of course $\mathfrak{F}_2$ contains  $\mathfrak{F_1}$;
moreover by construction $(\ga,\mathfrak{F}_2)$ is an EC, $w(\mathfrak{F}_2)$ is closed  and
$w(\mathfrak{F}_2) \supset w(\mathfrak{F}_1)$. If $\Sigma \setminus w(\mathfrak{F}_2)$ is not empty, let $\tau \in
\Sigma\setminus w(\mathfrak{F}_2);$  let $(\tau_1 , \tau_2)$ the
maximal interval
  enclosed in $\Sigma\setminus w(\mathfrak{F}_2)$ containing $\tau$. The way as
  $ \mathfrak{F}_2 $ has been constructed implies that
  $\ga \cap \relint Q_{\tau_2}\setminus  \relint Q_{\tau_1}= \emptyset,$ but
$\ga \cap \pa_{rel} Q_{\tau_2}\cap\pa_{rel}
Q_{\tau_1}\neq\emptyset.$ At has been noticed above, $\ga \cap
\pa_{rel} Q_{\tau_2}\cap\pa_{rel}  Q_{\tau_1} $ consists in just one
point.
 Let us complete  $\mathfrak{F}_2$ in a
connected quasi convex family (interpolating the couples $Q_{\tau_1},Q_{\tau_2}$
as in \eqref{interpolation}). Let us call $\mathfrak{G}$ the 
augmented family; then
 $(\ga, \mathfrak{G})$ is an EC,  $w(\mathfrak{G})=\Sigma$ and $\mathfrak{G}$ is connected.

Now let us parameterize  $\ga$. Let $x\in \ga$. Let
$$t^{-}(x)=\sup \{w(Q): Q \in \mathfrak{G},  x\not \in Q \}, t^{+}(x)=\min\{w(Q):
 Q \in \mathfrak{G}, x \in Q\}.$$
Let $I(x)=[t^{-}(x), t^{+}(x)]. $ The set valued map $x \to I(x)$
from $\ga$ to the metric space of closed subintervals of $\Sigma$ is
strictly "monotone", to say
\begin{equation}\label{ord}
x_1\prec x_2 \Longrightarrow \max I(x_1) < \min I(x_2).
\end{equation}
This is a consequence of the following two properties:\\
(I) the subclass of sets of $Q\in \mathfrak{G}, x_1 \in Q$ is contained in those
containing $x_2$;\\
(II) does not exist an element $Q$ of $\mathfrak{G}$ , such that $\pa_{rel} Q$
contains the arc $\stackrel{\frown}{x_1\,x_2}$ of $\ga$.\\
Let $t\in \Sigma,$
then $t\in I(x)$ for a suitable $x\in \ga$.  This gives us a continuous parametrization
 $t\to x(t)$ of $\ga$. Then (ii) and the first sentence of (iii) are proved.
Now let $t'< t'', x(t')\neq x(t'');$ this implies $x(t') \prec x(t'')$; then 
\eqref{ord} gives us  $x(t'')\not \in\Om_{t'}$.
 Property (iv) follows from
 \eqref{ineqdefEC+}, since as observed above, $(\ga,\mathfrak{G})$ is an EC.
\end{proof}

\begin{definition} Let $(\ga, \mathfrak{G})$ be   an EC with $\mathfrak{G}=\{\Om_t\}_{t\in T}$ a parameterized connected quasi convex family. Let
\begin{equation}\label{parametrx(t)}
x(t)=\max\{x\in\ga: x\in \Om_t\}.
\end{equation}
The map $t\to (x(t),\Om_t)$ will be called a {\bf(joint) parametrization} of the EC $(\ga, \mathfrak{G})$.
\end{definition}

 Let $(\ga,\mathfrak{G})$ be an EC with $\mathfrak{G}$ connected. Let us point out that:
\begin{enumerate}
\item [(a)] as in the previous theorem and definition a continuous parametrization  $t$
for $\mathfrak{G}$ can be  used to give a  parametrization of
$\ga$. However $t \to x(t) \in \ga$  may have sets of
constancy. This occurs as example in the case a point $x \in \ga $
belongs simultaneously to the boundary of all sets $\Om_t$ for
$t_1\leq t \leq t_2$;
\item[(b)] a 1-1 parametrization in curvilinear abscissa of $\ga$ could be used to  make
 a parametrization of $\mathfrak{G}$: to a $Q \in  \mathfrak{G}$ it
 is associated $s$ such that $x(s) \in \pa_{rel} Q$;  but some sets
 of $\mathfrak{G}$ may be lost. See the curve $\ga$ defined by $\eqref{x(t)par}$ associated to  the family in the example \ref{exellipsoids} where
 a such  parametrization for $\mathfrak{G}$ would have jumps.
\end{enumerate}
\begin{lemma}
 Let  $(\ga, \mathfrak{G})$ be an EC, with $\mathfrak{G}$  a connected quasi convex family and if
 \begin{equation}\label{dimQ}
 \mbox{ for every $Q \in \mathfrak{G}$, $Q \neq \min \mathfrak{G}$, the
 dimension of $\mathit{Aff}(Q)$ is constant} 
 \end{equation}
then there exists a joint parametrization $t\to (x(t),\Om_t)$ of  $(\ga, \mathfrak{G})$ such that $t\to x(t)$
is a viable steepest descent curve for $\{\Om_t\}$.
\end{lemma}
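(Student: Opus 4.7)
The plan is to take the canonical joint parametrization produced by theorem \ref{ECvsPEC}: parametrize $\mathfrak{G}$ by its mean width $t\in\Sigma:=[w(\min\mathfrak{G}),w(\max\mathfrak{G})]$ and let $x(t)=\max\{x\in\ga:x\in\Om_t\}$. I will then show that the constant-dimension hypothesis on $\mathit{Aff}(\Om_t)$ promotes this parametrization to a viable steepest descent curve in the sense of definition \ref{defSEPclassical}; both defining conditions (i)--(ii) of that definition have to be verified.

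The first, and central, step is the relative-boundary condition $x(t)\in\pa_{rel}\Om_t$ for every $t\in(\min\Sigma,\max\Sigma]$. Arguing by contradiction, suppose $x_0:=x(t_0)\in\relint\Om_{t_0}$ and pick $\delta>0$ with $B(x_0,\delta)\cap\mathit{Aff}(\Om_{t_0})\subset\Om_{t_0}$. Continuity of $t\mapsto x(t)$ (property (ii) of theorem \ref{ECvsPEC}) together with constant dimension (so $\mathit{Aff}(\Om_t)=\mathit{Aff}(\Om_{t_0})$ whenever $t>\min\Sigma$) forces, for $t$ near $t_0$, $x(t)\in B(x_0,\delta)\cap\mathit{Aff}(\Om_{t_0})\subset\Om_{t_0}$. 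Property (iii) of theorem \ref{ECvsPEC} (combined with the monotonicity $x(t)\preceq x(t_0)$ for $t<t_0$) then pins $x(t)\equiv x_0$ on a full two-sided neighborhood of $t_0$. Let $[t_1,t_2]$ be the maximal constancy interval and split on $t_2$. If $t_2<\max\Sigma$, maximality forces $x(t)\neq x_0$ for $t>t_2$ close, so the same argument applied at $t_2$ would yield $x_0\in\pa_{rel}\Om_{t_2}$; but $x_0\in\relint\Om_{t_0}\subset\relint\Om_{t_2}$ by nestedness with equal affine hull, a contradiction. If $t_2=\max\Sigma$, then $x_0=x(\max\Sigma)=\max\ga$ and $x_0\in\relint\max\mathfrak{G}$; by condition (ii) of definition \ref{defEC+} pick $x^*\in\ga\cap\pa_{rel}\max\mathfrak{G}$, observe $x^*\prec x_0$, and apply condition (iii) of definition \ref{defEC+} with $Q=\max\mathfrak{G}$, $y=x_0$, $x=x^*$, $x_1=x_0$ to derive $|x^*-x_0|\leq 0$, again a contradiction.

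Second, I would upgrade the monotonicity property (iv) of theorem \ref{ECvsPEC} to the differential inclusion $\dot x(t)\in N_{\Om_t}(x(t))$ almost everywhere. Since $\ga$ is rectifiable by theorem \ref{selfsecrettifiable} and the continuous map $t\mapsto x(t)$ is monotone in the $\ga$-ordering, it is of bounded variation with $V(x)\leq ||\ga||$; each coordinate is BV, hence $\dot x(t)$ exists for a.e. $t\in\Sigma$. At such a $t$, for every $t'<t$ and every $y\in\Om_{t'}$, property (iv) makes $s\mapsto|x(s)-y|^2$ non-decreasing on $(t',\max\Sigma]$, so its right derivative at $s=t$ yields $\langle\dot x(t),x(t)-y\rangle\geq 0$. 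Since $\mathfrak{G}$ is connected, lemma \ref{fenchel10} gives $\Om_t=\cl\bigcup_{t'<t}\Om_{t'}$, so the inequality extends by continuity to every $y\in\Om_t$, which is exactly $\dot x(t)\in N_{\Om_t}(x(t))$.

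The main obstacle is step 1: the constant-dimension hypothesis is used twice in an essential way, first to transfer the relative-interior ball at $x_0$ into a ball inside $\mathit{Aff}(\Om_t)$ for every nearby $t$ (driving the contradiction with property (iii) of theorem \ref{ECvsPEC}), and again to propagate $\relint$ across the nested pair $\Om_{t_0}\subset\Om_{t_2}$. Example \ref{exellipsoids} shows that viability (i) genuinely fails without it, so the dichotomy on $t_2$, and in particular the application of definition \ref{defEC+} (iii) at the right endpoint $t_2=\max\Sigma$, is where all the real content sits; step 2 is then a routine monotone-calculus argument built on property (iv).
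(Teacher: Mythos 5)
Your proof is correct, but it takes a genuinely different route from the paper's. The paper's own proof is very short and goes the other way around: it parameterizes by the \emph{arc length} of $\ga$, defines $\Om_t:=\min\{Q\in\mathfrak{G}: x(t)\in Q\}$, asserts $x(t)\in\pa\Om_t$ directly from connectedness of $\mathfrak{G}$ together with the constant-dimension hypothesis \eqref{dimQ}, and then obtains the differential inclusion exactly as in the proof of theorem \ref{SACisEC}, since the arc-length parametrization is Lipschitz, hence absolutely continuous. You instead keep the family-side parametrization: mean width for $\mathfrak{G}$ and the joint parametrization \eqref{parametrx(t)} from theorem \ref{ECvsPEC}, prove the boundary condition (i) of definition \ref{defSEPclassical} by a maximal-constancy-interval contradiction built on properties (ii)--(iii) of theorem \ref{ECvsPEC} and conditions (ii)--(iii) of definition \ref{defEC+}, and replace absolute continuity by a BV argument ($x(\cdot)$ monotone along a rectifiable $\ga$, hence differentiable a.e.) plus the density statement of lemma \ref{fenchel10} to pass from $y\in\Om_{t'}$, $t'<t$, to all $y\in\Om_t$. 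What each approach buys: the paper's arc-length choice makes the inclusion step immediate (AC is free), at the price of redefining the family through the curve, where \emph{a priori} some elements of $\mathfrak{G}$ could be skipped (cf.\ remark (b) after theorem \ref{ECvsPEC}); your choice keeps every element of $\mathfrak{G}$ and is literally a joint parametrization in the paper's sense, at the price of a curve that need not be absolutely continuous --- which is acceptable because definition \ref{defSEPclassical} only demands $\dot x(t)\in N_{\Om_t}(x(t))$ a.e., as the paper's Cantor-radius example confirms. One small imprecision: your claim that monotonicity pins $x\equiv x_0$ on a \emph{two-sided} neighborhood of $t_0$ is not justified as stated (for $t<t_0$ you would additionally need $x_0\in\Om_t$, which requires the density/constant-dimension argument); but this is harmless, since your dichotomy only uses the right endpoint $t_2$ of the constancy interval, and right-sided constancy is fully established by your use of property (iii) of theorem \ref{ECvsPEC}.
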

\begin{proof}
 Let us assume with no loss of generality that the dimension of
 elements Q is $n$. Let us choose the curvilinear abscissa of $\ga$ as the parameter $t$.
 Let 
 $$\Om_t:= \min \{Q\in \mathfrak{G}: x(t)\in Q, \quad 0\leq t \leq ||\ga|| \} .$$
 The fact that $\mathfrak{G}$ is connected and \eqref{dimQ} imply that $x(t)\in \pa \Om_t$. 
 The proof of (ii) of the definition \ref{defSEPclassical} can be done as in the proof of theorem \ref{SACisEC}.
\end{proof}

\begin{rem}\label{x(t0)costant} Let $(x(t),\Om_t)_{t\in T}$ be a joint parametrization of an EC with $\{\Om_t\}_{t\in T}$ a  connected quasi convex family and let 
$x(t_0)\in \text{relint\ } \Om_{t_0}$. 
Then there exists an interval, not reduced to a point, containing $t_0$ where $x(t)=x(t_0)$. 
\end{rem}

The following problem is addressed and solved in what follows.\\
Given a connected quasi convex family $\mathfrak{G}=\{\Om_t\}_{t\in
\Sigma}$ and a point $\overline{x} \in \pa \max \mathfrak{G};  $
does it  exist a curve $\ga$  with end point $\overline{x}$ so that
the couple $(\ga,\mathfrak{G})$ is a EC ? Is it $\ga$ unique ?

\begin{definition}
Let $(\ga^{(m)},\mathfrak{G}^{(m)})$ be  a sequence of EC with $\mathfrak{G}^{(m)}$ connected quasi convex families and let $\min \mathfrak{G}^{(m)}=\Om_0$,
 $\max \mathfrak{G}^{(m)} = \Om$. Let    $s^{(m)}\in [0, l^{(m)}]$ be 
the arc length of $\ga^{(m)}$, and let  us choose for all of them the same parameter $t\in [0,1], t=s^{(m)}/l^{(m)}$.
 Let us define 
$$\lims_{m\to \infty}(\ga^{(m)},\mathfrak{G}^{(m)})=(\ga,\mathfrak{G})$$
if the sequence $[0,1]\ni t \to x^{(m)}(t)$ uniformly converges to $t\to x(t)$ of $\ga$ and $\mathfrak{G}^{(m)}$ converges to $\mathfrak{G}$ according to definition \ref{defconvergg}.
\end{definition}
\begin{theorem}\label{lemmacompactEC}
Let $(\ga^{(m)},\mathfrak{G}^{(m)})$ be a sequence of EC  with $\mathfrak{G}^{(m)} $ connected quasi convex families and let $\min \mathfrak{G}^{(m)}=\Om_0$,
$\max \mathfrak{G}^{(m)} = \Om$.
  Then there exists a subsequence which converges to an EC $(\ga,\mathfrak{G})$.
\end{theorem}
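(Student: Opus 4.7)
The plan is to apply Arzel\`a--Ascoli compactness separately to the families and to the curves, using the two available uniform estimates---theorem \ref{selfsecrettifiable} for curve length and proposition \ref{problemII} for the mean-width modulus of continuity---and then verify each of the EC conditions of definition \ref{defEC+} by passing to the limit.

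For the families, each $w \mapsto \Om_w^{(m)}$ is a continuous map from $\Sigma := [w(\Om_0), w(\Om)]$ into the metric space of compact convex subsets of $\Om$ with Hausdorff distance, in which bounded sequences are precompact by Blaschke's selection theorem. Proposition \ref{problemII} supplies the uniform H\"older-$1/n$ bound
$$
\dist\bigl(\Om^{(m)}_{w_1}, \Om^{(m)}_{w_2}\bigr) \le \Bigl(\frac{(\diam \Om)^{n-1}}{c^{(0)}_n}\Bigr)^{1/n} |w_2 - w_1|^{1/n},
$$
so Arzel\`a--Ascoli gives a subsequence with $\Om_w^{(m)} \to \Om_w$ uniformly in $w$. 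Hausdorff continuity of the mean width forces $w(\Om_w) = w$, so distinct parameters give distinct sets, and the pointwise nestings pass to the limit; hence $\mathfrak{G} := \{\Om_w\}_{w \in \Sigma}$ is a convex stratification with $w(\mathfrak{G}) = \Sigma$, which by theorem \ref{connectedimpliseregular} is a connected quasi convex family.

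For the curves, theorem \ref{selfsecrettifiable} bounds $l^{(m)} = \|\ga^{(m)}\| \le c^{(1)}_n w(\Om)$ uniformly, so the normalized parametrizations $t \mapsto x^{(m)}(t)$ on $[0,1]$ are uniformly Lipschitz and valued in $\Om$; a second Arzel\`a--Ascoli extraction yields $x^{(m)} \to x$ uniformly on $[0,1]$. The SEP inequality (\ref{IBDCmonlinearly}) passes directly to the limit, and the observation that $x(t_1) = x(t_3)$ for $t_1 < t_3$ forces $x$ constant on $[t_1,t_3]$ lets the parameter descend to a well-defined strict linear order on $\ga := x([0,1])$, so $\ga$ is a SEP. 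Conditions (i) and (ii) of definition \ref{defEC+} then follow by choosing $y^{(m)} \in \ga^{(m)} \cap Q^{(m)}$ (respectively in $\pa_{rel}\Om$), writing $y^{(m)} = x^{(m)}(\tau^{(m)})$, passing to a cluster point of $\{\tau^{(m)}\}$, and using the two uniform convergences.

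The main obstacle is condition (iii). Fix $Q \in \mathfrak{G}$, $y \in Q$, $x(t_0) \notin \relint Q$, and $t_1 > t_0$ with $x(t_0) \neq x(t_1)$; the goal is $|x(t_0) - y| \le |x(t_1) - y|$. The direct sub-case is $x(t_0) \notin Q$: positive distance to $Q$ together with the uniform convergences gives $x^{(m)}(t_0) \notin Q^{(m)}$ for large $m$; choosing $y^{(m)} \in Q^{(m)}$ with $y^{(m)} \to y$ and invoking the EC property of $(\ga^{(m)}, \mathfrak{G}^{(m)})$ yields the inequality before passage to the limit. The hard sub-case is $x(t_0) \in \pa_{rel} Q$: if there exist $t_n \to t_0^\pm$ with $x(t_n) \notin Q$, the direct case applied at $t_n$ and taken as $n \to \infty$ concludes; otherwise $\ga$ hugs $Q$ on an interval around $t_0$, and one would combine the inward SEP monotonicity of theorem \ref{corIBDpath} (which already handles any $y \in \co(\ga(t_0))$) with a careful use of the joint mean-width parametrization of each $(\ga^{(m)}, \mathfrak{G}^{(m)})$ supplied by theorem \ref{ECvsPEC}, extracting via a Helly-type selection a pointwise limit parametrization that inherits the monotonicity property (iv) and hence delivers (iii). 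This degenerate sub-case is where the real technical work of the proof lies.
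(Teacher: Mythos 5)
Your compactness skeleton is exactly the paper's: parameterize the families by mean width, use proposition \ref{problemII} together with Blaschke and Ascoli--Arzel\`a to extract a uniformly convergent subsequence of families, normalize the curves by arc length, use theorem \ref{selfsecrettifiable} to get an equi-Lipschitz bound and extract a uniform limit SEP, and then pass conditions (i)--(iii) of definition \ref{defEC+} to the limit; your extra verification that the limit family is a connected quasi convex family (via theorem \ref{connectedimpliseregular}) is a harmless addition. The genuine gap is exactly where you admit it is: condition (iii) in the sub-case $x(t_0)\in\pa_{rel}Q$ with no nearby times at which the curve is outside $Q$. There you offer only a plan, and its ingredients do not carry the weight: theorem \ref{corIBDpath} gives monotonicity of $t\mapsto|x(t)-y|$ only for $y\in\co(\ga(t_0))$, and a general $y\in Q$ need not lie in that convex hull; and ``extracting a limit joint parametrization that inherits property (iv)'' is essentially the statement (iii) for the limit couple, so as formulated the plan is circular. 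Note also that this degenerate configuration is precisely what must be excluded: if some later point $x(t_1)\neq x(t_0)$ lay in $Q$, then choosing $y=x(t_1)$ in \eqref{ineqdefEC+} would force $x(t_0)=x(t_1)$; so either you rule the configuration out or the limit simply is not an EC, and nothing in your text does the ruling out.

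The paper closes this point without any reparametrization or Helly-type selection, by a small but decisive shift: it never tests the inequality at $x=x(t)$ itself. Given $x\prec x'\prec x_1$ on the limit curve with $x\notin\relint Q$, it first proves $|x'-y|\le|x_1-y|$ for all $y\in Q$, arguing that the approximating points $x^{(m)}(t')$ eventually lie outside $\Om^{(m)}_w$ (inferred from the compactness of $Q$ and the two uniform convergences), so the EC inequality of $(\ga^{(m)},\mathfrak{G}^{(m)})$ applies at $x^{(m)}(t')$ with $y^{(m)}\in\Om^{(m)}_w$, $y^{(m)}\to y$, and passes to the limit; then it lets $x'\to x$ and uses continuity of the parametrization to obtain \eqref{ineqdefEC+} at $x$. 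If you adopt this device, your ``hard sub-case'' is absorbed: you never need $x(t_0)$, or times arbitrarily close to $t_0$, to be outside $Q$ --- only strictly later intermediate points, for which the approximation argument is the one you already wrote for your direct sub-case.
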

\begin{proof} Let us parameterize all
  $\mathfrak{G}^{(m)}$ by the mean width $w \in W= [w(\Om_0),w(\Om)]$,
  i.e. $\mathfrak{G}^{(m)}=\{\Om_{w}^{(m)}\}_{w\in W}. \; $
Then, each $\mathfrak{G}^{(m)} $ is represented as a continuous
function defined in $ W $ and valued into the metric space $
\mathfrak{K} $ of all compact subsets of $ \Om $ with Hausdorff
distance.
  From proposition \ref{problemII}
   these functions  are  equicontinuous; from Ascoli-Arzel\'a
   theorem \cite[p. 234]{Kelley}  there exists a   converging subsequence
   to a continuous function, corresponding to a family
   $\mathfrak{G}=\{\Om_{w}\}_{w\in W}$. Let us assume for simplicity  that
    the converging subsequence is the initial sequence.
Let us consider now the sequence of the corresponding curves
$\ga^{(m)}$. Let $l^m=||\ga^m||$. Theorem  \ref{selfsecrettifiable} implies 
$$0< l^m< c^{(1)}_n w(\Om).$$
Let us reparameterize  $\ga^{(m)}$ with  their arc
length $s^{(m)}\in [0, l^{(m)}]$, and let us choose for all of them
the same parameter $t\in [0,1], t=s^{(m)}/l^{(m)}$. From theorem
\ref{selfsecrettifiable} they are equilipschitz, then there exists a
subsequence converging to a self expanding path  $\ga=\{x(t), t\in
[0,1]\}$. It remains to prove that   $(\ga,\mathfrak{G})$ is a EC.

 (i) and (ii) of definition \ref{defEC+} hold with a limit argument.

 To prove (iii),
let $Q\in \mathfrak{G}$, $x\prec x' \prec x_1 \in \ga$,
$ x\not \in \text{relint\ }Q$, $x'=x(t'),\, x_1=x(t_1)$. The previous argument implies that
 $Q= \lims_{m\to \infty} \Om_{w}^{(m)}$, with $w=w(Q)$,
  $x=x(t)=\lims_{m\to \infty} x^{(m)}(t),\, x'=x(t')=\lims_{m\to \infty} x^{(m)}(t'), x_1=x(t_1)=\lims_{m\to \infty} x^{(m)}(t_1).$
   Since  $Q$ is compact, then eventually  $x^{(m)}(t')\not \in \Om_{w}^{(m)}$.
Since $(x^{(m)}(\cdot),\mathfrak{G}^{(m)})$ are  EC, they satisfy
definition \ref{defEC+}.
It follows that for $m$ large enough
$$\forall y \in \Om_{w}^{(m)} \Longrightarrow  |x^{(m)}(t')-y| \leq |x^{(m)}(t_1)-y|.$$
Then, for every
$ y \in Q \mbox{\quad  it holds \quad } |x'-y| \leq |x_1-y|.$ Thus, as $x'$ tends to $x$, \eqref{ineqdefEC+} holds.
\end{proof}

The following is an existence   result for expanding couples.
\begin{theorem}\label{coruniqcondEC} Let  $ \mathfrak{G}=\{\Om_t\}_{t\in T}$ be a connected
 quasi convex family in $\rn$ with $t=w(\Om_t)$, $T=[w(\min \mathfrak{G}),w(\max \mathfrak{G})]$ then
\begin{enumerate}
\item[(i)] for every $ \overline{x}\in \pa \max \mathfrak{G}$ there exists a self expanding path
 $\ga$, so that $(\ga,\mathfrak{G})$ is an EC with $\ga $ having end point $ \overline{x}$;
\item[(ii)] $\ga$ is rectifiable, with length  bounded by the mean width of $\mathfrak{G}$ times
a constant depending only on $n$;
\item[(iii)]  $\ga$  can be uniformly approximated with  piecewise linear
self expanding  curves
 $x^{(m)}(\cdot)$, related to  connected
 quasi convex families  $\mathfrak{G}^{(m)}$ (uniformly converging to
 $\mathfrak{G}$) and  $(x^{(m)}(\cdot),\mathfrak{G}^{(m)})$ are joint parameterized EC.
\end{enumerate}
\end{theorem}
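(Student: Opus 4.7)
The plan is to construct $\ga$ as a uniform limit of piecewise linear self expanding approximants built by iterated metric projection, and then to identify the limit couple via the compactness theorem \ref{lemmacompactEC}. Choose a partition $w(\min \mathfrak{G}) = \tau_0 < \tau_1 < \cdots < \tau_m = w(\max \mathfrak{G})$ with mesh tending to $0$. Set $x_m := \overline{x}$ and define inductively $x_{i-1}$ as the metric projection of $x_i$ onto $\Om_{\tau_{i-1}}$; let $\ga^{(m)}$ be the concatenation of the segments $[x_{i-1},x_i]$. By the characterisation of the projection, $x_i - x_{i-1} \in N_{\Om_{\tau_{i-1}}}(x_{i-1})$, so each segment is orthogonal to a supporting hyperplane of $\Om_{\tau_{i-1}}$ at $x_{i-1}$.

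Next I would verify that $\ga^{(m)}$ is a SEP and, more strongly, that the pair $(\ga^{(m)}, \mathfrak{F}^{(m)})$ with $\mathfrak{F}^{(m)} := \{\Om_{\tau_0},\dots,\Om_{\tau_m}\}$ is an expanding couple. For any $p$ on a segment $[x_{l-1},x_l]$ and any later segment $[x_{j-1},x_j]$ with $j > l$, the convex quadratic $s \mapsto |p-((1-s)x_{j-1}+sx_j)|^{2}$ has derivative $2\langle x_j-x_{j-1},\,x_{j-1}-p\rangle \geq 0$ at $s=0$, because $p \in \Om_{\tau_l} \subseteq \Om_{\tau_{j-1}}$ and $x_j-x_{j-1}$ lies in the outer normal cone there; hence this quadratic is monotone on $[0,1]$. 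Chaining across segments yields the SEP inequality \eqref{IBDCmonlinearly}, and, when $p$ is replaced by an arbitrary point $y \in \Om_{\tau_i}$ with $x \notin \relint \Om_{\tau_i}$, the same projection argument together with lemma \ref{lemmaecissec} gives \eqref{ineqdefEC+}, i.e. condition (iii) of definition \ref{defEC+}. Conditions (i) and (ii) are immediate from the construction.

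I would then apply the interpolation of theorem \ref{ECvsPEC} (equivalently theorem \ref{repcomp}) to embed $\mathfrak{F}^{(m)}$ into a connected quasi convex family $\mathfrak{G}^{(m)}$, parameterised by mean width on $[w(\min \mathfrak{G}), w(\max \mathfrak{G})]$, with $(\ga^{(m)}, \mathfrak{G}^{(m)})$ still an EC and $\min \mathfrak{G}^{(m)} = \min \mathfrak{G}$, $\max \mathfrak{G}^{(m)} = \max \mathfrak{G}$. Since the mean‑width parametrisation of $\mathfrak{G}$ is continuous and therefore uniformly continuous on the compact interval $T$ (theorem \ref{connectedimpliseregular}), the parallel–body interpolation on each sub‑interval $[\tau_{i-1},\tau_i]$ lies in a shrinking Hausdorff neighbourhood of the corresponding arc of $\mathfrak{G}$; as the mesh $\to 0$ this proves $\mathfrak{G}^{(m)} \to \mathfrak{G}$ in the sense of definition \ref{defconvergg}.

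Finally, theorem \ref{lemmacompactEC} applied to the sequence $(\ga^{(m)}, \mathfrak{G}^{(m)})$ yields a subsequence converging to an expanding couple $(\ga, \mathfrak{G}^{\ast})$; by the previous paragraph $\mathfrak{G}^{\ast} = \mathfrak{G}$, and because each $\ga^{(m)}$ has endpoint $\overline{x}$ the limit path does too, proving (i) and (iii). The length bound (ii) then follows from theorem \ref{selfsecrettifiable}: $\|\ga\| \leq c^{(1)}_n\, w(\co \ga) \leq c^{(1)}_n\, w(\max \mathfrak{G})$. The main obstacle in this plan is the verification that $(\ga^{(m)}, \mathfrak{F}^{(m)})$ is an EC at all points of $\ga^{(m)}$ and all points $y\in \Om_{\tau_i}$, not merely at vertices and extreme points; the rest is a routine combination of equicontinuity (via proposition \ref{problemII}) and the already–established compactness theorem.
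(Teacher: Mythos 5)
Your construction is essentially the paper's own proof: backward metric projections from $\overline{x}$ onto a partition of the family, piecewise linear approximants, completion to connected quasi convex families converging to $\mathfrak{G}$, compactness via theorem \ref{lemmacompactEC}, and the length bound from theorem \ref{selfsecrettifiable}. The only divergence is how the approximants are certified as expanding couples --- the paper shows each segment is orthogonal to the interpolated parallel bodies, so $x^{(m)}(\cdot)$ is a viable steepest descent curve for $\mathfrak{G}^{(m)}$ and theorem \ref{SACisEC} applies, while you check the inequality of definition \ref{defEC+} directly for the finite stratification (the normal-cone inequality at the projection points is the same estimate) and then complete via theorem \ref{ECvsPEC}; the obstacle you flag (the condition at points $x\notin\relint\Om_{\tau_i}$ lying on earlier segments) is handled in the paper at exactly the same level of detail, since theorem \ref{SACisEC} likewise only yields monotonicity of $t\mapsto|x^{(m)}(t)-y|$, $y\in\Om_{\tau_i}$, from the level $\tau_i$ onwards.
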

\begin{proof} Let  $\overline{w}= w(\max
\mathfrak{G}), w_0=w(\min \mathfrak{G})$.

 Let
$D$ be a countable dense subset of $[w_0, \overline{w}]$, and let
 $w_0=w_1^{(m)} < \cdots < w_m^{(m)}=\overline{w}$ elements of $D$,
  satisfying
   $\{w_1^{(m)}, \ldots , w_m^{(m)}\}\subset \{w_1^{(m+1)}, \ldots , w_{m+1}^{(m+1)}\}$.
Let $\{\Om_{w_1^{(m)}}, \ldots , \Om_{w_m^{(m)}}\}$ be a finite
convex stratification (extracted from $\mathfrak{G}$) and let
$\{O^{(m)}_t\}_{t \in [w_0,\overline{w}]}$ the related connected
family as defined in the proof of theorem \ref{repcomp}. Let $t \to
x^{(m)}(t)$ the piecewise linear curve ending at $\overline{x}$,
union of
 segments joining the $m$  points   $p^{(1)}, \ldots, p^{(m)}, $  where
  $p^{(m)}=\overline{x}$,\quad $p^{(j-1)}$ is the projection of $p^{(j)}$ onto
  $\Om_{w_{j-1}^{(m)}}, j=2, \ldots ,m $.  Let
\begin{equation}\label{p(t)}
x^{(m)}(t)=\frac{(t-w_{j-1}^{(m)})p^{(j)}+(w_{j}^{(m)}-t)p^{(j-1)}}
{w_{j}^{(m)}-w_{j-1}^{(m)}}, \quad w_{j-1}^{(m)}\leq t \leq
w_j^{(m)},
\end{equation}
the corresponding  point between $p^{(j-1)}$ and $ p^{(j)}$.
If  $p^{(j-1)} \neq p^{(j)}$, the unit segment direction  $ d_j$ of $p^{(j-1)} p^{(j)}$,  is orthogonal at each point
 $x^{(m)}(t)$ of $p^{(j-1)} p^{(j)}$ to the boundary of convex set $O^{(m)}_t$ which is
 an interpolation between $\Om_{w_{j-1}^{(m)}}$ and $\Om_{w_{j}^{(m)}}$ as
 defined in (\ref{interpolation}).
Therefore
$$\dfrac{d x^{(m)}}{dt}(t)\in N_{O_t^{(m)}}(x^{(m)}(t)) \quad \text{for}
 \quad w_{j-1}^{(m)}\leq t \leq
w_j^{(m)} .$$
Thus for a small  $\vare >0$, $p^{(j-1)} p^{(j)}\cap O^{(m)}_{w_{j-1}^{(m)}+\vare} $ is the trajectory of a viable  steepest descent
 curve of the quasi convex family 
  $$\{O^{(m)}_{t}\}_{t\in [w_{j-1}^{(m)}+\vare ,w_j^{(m)}-\vare]}.$$
From theorem \ref{SACisEC}, 
$$t\to (x^{(m)}(t),O^{(m)}_{t})_{t\in [w_{j-1}^{(m)},w_j^{(m)}]}$$
is a joint parametrization of $(x^{(m)}(\cdot),\{O^{(m)}_{t}\}_{t\in [w_{j-1}^{(m)},w_j^{(m)}]}$; of course 
  $T\ni t \to x^{(m)}(t)$ is a  SEP,  and
  $(x^m(\cdot), \mathfrak{G}^{(m)})$ with $\mathfrak{G}^{(m)}=\{O^{(m)}_{t\in T}\}$ are
  expanding couples. By construction the sequence of families
    $\mathfrak{G}^{(m)}$ converges uniformly in the Hausdorff distance to
    $\mathfrak{G}$.  From  theorem \ref{lemmacompactEC}  there exists a
  subsequence of $(x^m(\cdot), \mathfrak{G}^{(m)})$ converging to an expanding couple 
  $(\ga,\mathfrak{G})$. Thus (i) and  (iii) are proved and (ii) follows from \eqref{||ga||<=c(n)}.
\end{proof}
From  theorem \ref{repcomp}, previous theorem and remark \ref{EcinEC} it follows 
\begin{cor}Let $\mathfrak{F}$ be a convex stratification and let $\overline{x}\in \pa \max \mathfrak{F}$. Then, there exists a SEP $\ga$ with end point $\overline{x}$ so that $(\ga, \mathfrak{F)}$ is an EC.
\end{cor}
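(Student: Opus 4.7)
The plan is to chain together the three results flagged just before the corollary statement: the completeness theorem \ref{repcomp}, the existence theorem \ref{coruniqcondEC}, and the embedding remark \ref{EcinEC}.

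First, I would apply theorem \ref{repcomp} to the given convex stratification $\mathfrak{F}$. This produces a connected quasi convex family $\mathfrak{G}$ with $\mathfrak{F} \subset \mathfrak{G}$, $\min \mathfrak{G} = \min \mathfrak{F}$, and crucially $\max \mathfrak{G} = \max \mathfrak{F}$. The equality of maxima is what makes the corollary accessible, because it guarantees that the given point $\overline{x}\in \pa \max \mathfrak{F}$ is automatically a point of $\pa \max \mathfrak{G}$.

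Next, I would invoke theorem \ref{coruniqcondEC} applied to the connected quasi convex family $\mathfrak{G}$ and the boundary point $\overline{x}\in \pa \max \mathfrak{G}$. This yields a self expanding path $\ga$ ending at $\overline{x}$ such that $(\ga,\mathfrak{G})$ is an expanding couple.

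Finally, since $\mathfrak{F}\subset \mathfrak{G}$, remark \ref{EcinEC} immediately transfers the EC property from $(\ga,\mathfrak{G})$ to $(\ga,\mathfrak{F})$, which is the claim. Thus the corollary is a direct concatenation and there is essentially no obstacle; the only detail to verify, which is automatic from the construction in theorem \ref{repcomp}, is that $\overline{x}\in \pa \max \mathfrak{F}$ does indeed coincide with a point of $\pa \max \mathfrak{G}$, and that the boundary is taken in the same ambient convex body so the existence theorem \ref{coruniqcondEC} can be applied verbatim.
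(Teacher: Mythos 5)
Your proposal is correct and follows exactly the paper's own route: the corollary is stated in the paper as an immediate consequence of theorem \ref{repcomp}, theorem \ref{coruniqcondEC} and remark \ref{EcinEC}, precisely the chain you describe, with $\max\mathfrak{G}=\max\mathfrak{F}$ ensuring $\overline{x}\in\pa\max\mathfrak{G}$. Nothing is missing.
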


The following lemma and theorem show that given an EC $(\ga,\mathfrak{G})$ with  $\mathfrak{G}$ a connected quasi convex family, a joint parametrization for $(\ga,\mathfrak{G})$ that gives an absolutely continuous 
parametrization (indeed lipschitz) for  $\ga$ can be found.
\begin{lemma}\label{s(w)} Let $s:[w_0,\underline{w}]\to \R $ be a continuous not decreasing function and let $\eta$ be the plane 
curve ${(w,s(w)), w_0\leq w \leq \underline{w}}$. 
Then there exists a continuous, strictly increasing function $\tau: [w_0,\underline{w}]\to [0, ||\eta||]$,  with inverse $w(\cdot)$, so that $s(w(\cdot)): [0, ||\eta ||] \to \R$ is a lipschitz function.
\end{lemma}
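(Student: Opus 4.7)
The plan is to take $\tau(w)$ to be the arc length of the graph $\eta$ measured from $(w_0,s(w_0))$ up to $(w,s(w))$. The only nontrivial data is that $s$ is continuous and monotone, so $\eta$ is a rectifiable curve (any polygonal inscribed approximation is bounded in length by $(\underline{w}-w_0)+(s(\underline{w})-s(w_0))$, using monotonicity to telescope the vertical contributions), so $\tau$ is well defined and $\tau(\underline{w})=\|\eta\|$.

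Next I would verify that $\tau:[w_0,\underline{w}]\to[0,\|\eta\|]$ is continuous and strictly increasing. Continuity is the standard fact that the arc length function along a rectifiable, continuously parameterized curve is continuous (and here it follows directly from the continuity of $s$: a small horizontal increment produces a small vertical increment, hence a small increment of $\tau$). Strict monotonicity is even easier: for $w_1<w_2$, every inscribed polygon of $\eta|_{[w_1,w_2]}$ has length at least the horizontal chord $w_2-w_1$, so
$$
\tau(w_2)-\tau(w_1)\;\ge\;w_2-w_1\;>\;0.
$$
Therefore $\tau$ is a continuous bijection between compact intervals and its inverse $w(\cdot):[0,\|\eta\|]\to[w_0,\underline{w}]$ is continuous and strictly increasing.

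For the Lipschitz conclusion, fix $0\le\tau_1<\tau_2\le\|\eta\|$ and set $w_i:=w(\tau_i)$. Since every polygonal approximation $\sum_i\sqrt{(\Delta w_i)^2+(\Delta s_i)^2}$ of $\eta|_{[w_1,w_2]}$ dominates the telescoping sum $\sum_i\Delta s_i=s(w_2)-s(w_1)$ (using monotonicity of $s$), passing to the supremum over partitions gives
$$
\tau_2-\tau_1\;=\;\mathrm{length}\bigl(\eta|_{[w_1,w_2]}\bigr)\;\ge\;s(w_2)-s(w_1)\;\ge\;0.
$$
Hence $|s(w(\tau_2))-s(w(\tau_1))|\le|\tau_2-\tau_1|$, i.e.\ $s\circ w$ is $1$-Lipschitz on $[0,\|\eta\|]$, which is the desired conclusion. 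No step is really a serious obstacle; the whole argument is essentially a choice of parametrization, with the only mild care needed in justifying continuity of the arc length function $\tau$.
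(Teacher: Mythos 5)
Your proposal is correct and follows essentially the same route as the paper: parameterize by the arc length $\tau(w)$ of the graph of $s$, note $\tau$ is continuous and strictly increasing (the paper cites Saks for this, you verify it directly), and conclude $1$-Lipschitz continuity of $s\circ w$ from the fact that the length of a piece of $\eta$ dominates its vertical increment. The extra details you supply (rectifiability and the chord estimates via monotonicity) are fine and merely make explicit what the paper leaves to the reference.
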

\begin{proof}
Let 
$$\tau (w):=||\eta \cap([w_0,w]\times \R)||.$$
The function $w\to \tau (w)$ is a continuous, strictly increasing function (see e.g. \cite[theorem 8.4]{Saks}) with inverse
$$w:[0,|| \eta||] \to [w_0,\underline{w}].$$
Let $0\leq \tau_1 < \tau_2 \leq || \eta|| $, then:
$$|s(w(\tau_2))-s(w(\tau_1))|\leq ||\eta\cap([w(\tau_1),w(\tau_2)]\times\R)||=\tau_2-\tau_1.$$
This concludes  the proof. \ \end{proof}
\begin{theorem}\label{abscont}
Let $(\ga,\mathfrak{G})$ be an expanding couple with $\mathfrak{G}$ a connected quasi convex family. Then there exists 
a joint parametrization $(z(\tau), \Om_\tau )_{\tau \in T}$ of $(\ga,\mathfrak{G})$ so that $t\to z(t)$ is 
lipschitz.
\end{theorem}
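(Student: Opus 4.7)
The plan is to take the joint parametrization already available through Theorem \ref{ECvsPEC} and reparametrize it using Lemma \ref{s(w)} so that the curve becomes Lipschitz.

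First, parameterize $\mathfrak{G}$ by the mean width, $T=[w(\min\mathfrak{G}),w(\max\mathfrak{G})]$, so that Theorem \ref{ECvsPEC} supplies a continuous joint parametrization $t\mapsto(x(t),\Om_t)$ of $(\ga,\mathfrak{G})$. As $\ga$ is a self expanding path (the first coordinate projection of the EC), Theorem \ref{selfsecrettifiable} guarantees it is rectifiable; let $\sigma\mapsto y(\sigma)$, $\sigma\in[0,\|\ga\|]$, be its unit speed (hence $1$-Lipschitz) arc length parametrization. Define the cumulative arc length along $\ga$ up to the point $x(t)$:
\[
s(t):=\length\bigl(\ga\cap \ga_{x(t)}\bigr),\qquad t\in T.
\]
Since $t\mapsto x(t)$ is continuous and monotone in the order of $\ga$ (by property (iii) of Theorem \ref{ECvsPEC}), the function $s:T\to[0,\|\ga\|]$ is continuous and non decreasing.

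Next, apply Lemma \ref{s(w)} to the continuous non decreasing function $s$ on $T$. This produces a continuous strictly increasing map $\tau\mapsto w(\tau)$ from $[0,\|\eta\|]$ onto $T$, where $\eta$ is the graph curve $\{(t,s(t)):t\in T\}$, with the property that the composed function
\[
\tilde s(\tau):=s(w(\tau))
\]
is Lipschitz (with Lipschitz constant $1$ in fact, by the construction in the lemma). Now define the new joint parametrization by
\[
z(\tau):=x(w(\tau)),\qquad \Om_\tau^{new}:=\Om_{w(\tau)},\qquad \tau\in[0,\|\eta\|].
\]
This is a legitimate joint parametrization: since $x(w(\tau))=\max\{x\in\ga:x\in\Om_{w(\tau)}\}$ by definition (\ref{parametrx(t)}) applied at time $w(\tau)$, we have $z(\tau)=\max\{x\in\ga:x\in\Om_\tau^{new}\}$.

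Finally, for $0\le\tau_1<\tau_2\le\|\eta\|$, since $z(\tau_i)$ are points of $\ga$ joined by an arc of length $\tilde s(\tau_2)-\tilde s(\tau_1)$, the chord inequality gives
\[
|z(\tau_2)-z(\tau_1)|\;\le\;\tilde s(\tau_2)-\tilde s(\tau_1)\;\le\;\tau_2-\tau_1,
\]
proving that $\tau\mapsto z(\tau)$ is Lipschitz. The only subtle point is the monotonicity of the arc length $s(t)$, which relies on the fact that in a joint parametrization the map $t\mapsto x(t)$ only ``moves forward'' along $\ga$ (possibly being constant on intervals where $x(t)\in\relint\Om_t$, cf.\ Remark \ref{x(t0)costant}); once this is in hand, everything else is a direct composition argument, and Lemma \ref{s(w)} does all the analytic work.
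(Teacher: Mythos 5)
Your proposal is correct and follows essentially the same route as the paper: both use the mean width joint parametrization from Theorem \ref{ECvsPEC}, define the cumulative arc length $s(w)=\|\ga_{x(w)}\|$, reparametrize via Lemma \ref{s(w)}, and conclude Lipschitz continuity from the arc-length parametrization of $\ga$ (your chord-versus-arc inequality is just the paper's composition $z(\tau)=y(s(w(\tau)))$ of the $1$-Lipschitz map $y$ with the Lipschitz map $s\circ w$).
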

\begin{proof}
Let us start from  the results of  theorem \ref{ECvsPEC}. 
If the mean width parameter $w$ of elements of the family $\mathfrak{G}=\{O_w\}_{w\in[w(\min \mathfrak{G}),w(\max \mathfrak{G})]}$ is used to parameterize
$\ga$ as in \eqref{parametrx(t)}, a continuous monotone parametrization  $x(w), w\in [w(\min \mathfrak{G}),w(\max \mathfrak{G})]$ of $\ga$ is obtained.
From theorem \ref{selfsecrettifiable} the curve $\gamma$ is rectifiable and can be represented as function of its arc length: $y(s), 0\leq s \leq ||\ga||$. 

Let  $s:[w(\min \mathfrak{G}),w(\max \mathfrak{G})] \to \R$ the map defined as
$$w \to s(w)=||\ga\cap O_w||=||\ga_{x(w)}||.$$
$s(\cdot)$ is  a continuous  not decreasing  function and $y(s(w))=x(w)$. From previous lemma there exists a continuous change of variable  $w=w(\tau)$ so that
$\tau \to s(w(\tau))$  is lipschitz continuous.
The connected quasi convex family  $\mathfrak{G}=\{O_w\}$ will be changed in $\Om_{\tau}=O_{w(\tau)}$ with
$\tau \in [0, || graph(s)|| ]$;  the associated  curve $\ga$ has the parametrization $z(\tau)= y(s(w(\tau)))$;
since   $s\to y(s)$ is lipschitz continuous,
 $z(\cdot):\tau \to y(s(w(\tau)))$, composition of two lipschitz continuous maps, is lipschitz continuous. 
\end{proof}

 \begin{ex}
Let  $[0,1] \ni t \to g(t)$ be the Cantor  function. Let $\mathfrak{F}=\{\Om_t\},$ with
$$
\Om_t =\{x \in \RR^2: |x| \leq \frac{g(t)+t}{2}, 0\leq t \leq 1\}
$$
 a connected quasi convex family of concentric circles, and let $x(\tau)=
 \frac{g(\tau)+\tau}{2}\overline{x}$ ($\overline{x}\in \partial \Omega_1$) be the
 parametrization of the radius from the origin to $\overline{x}$. It is a viable steepest descent
 curve for $\mathfrak{F}$, but its parametrization is not absolutely continuous.
 \end{ex}
However, let us notice that, 
using  in place of $t$ the mean width parameter $w$ of the family $\{\Om_t\}$, i.e. $w(\Om_t)=g(t)+t$ and the parametrization of $\ga$ given by \eqref{parametrx(t)},  the absolutely continuous property for the parametrization  $\ga$   is restored. 
Next   proposition shows that, given a connected   quasi convex family $\{\Om_t\}_{t\in T}$,
viable  steepest  descent  curves are  continuously depending on their end point, if they are absolutely continuous.

\begin{proposition}\label{uniqcondepSDP}
Let  $x(\cdot), y(\cdot)$ be two absolutely continuous viable steepest descent
 curves of   a connected quasi convex family $\{\Om_t\}_{t\in T }$ with end  points
 $\overline{x}, \overline{y}$ respectively. Then
\begin{equation}\label{SDPineq}
|x(t)-y(t)| \leq |\overline{x}-\overline{y}| \quad \forall
\, t \in T,
\end{equation}
and there is at most   one  absolutely continuous viable steepest
descent curve with given end point.
\end{proposition}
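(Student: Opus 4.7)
The plan is to show that the squared distance $t \mapsto |x(t)-y(t)|^2$ is a non-decreasing function on $T$, whence evaluating at the right endpoint $\max T$ gives the inequality \eqref{SDPineq}, and uniqueness follows by taking $\overline{x}=\overline{y}$.

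First, I would note that since $x(\cdot)$ and $y(\cdot)$ are absolutely continuous on $T$, so is their difference, and hence $t \mapsto |x(t)-y(t)|^2$ is absolutely continuous. Thus it suffices to prove that its derivative is non-negative almost everywhere. At a point $t$ where both $\dot x(t)$ and $\dot y(t)$ exist, one computes
\begin{equation*}
\tfrac{1}{2}\tfrac{d}{dt}|x(t)-y(t)|^{2}=\langle \dot x(t)-\dot y(t),\,x(t)-y(t)\rangle .
\end{equation*}
The key observation is that by definition \ref{defSEPclassical}(i), $y(t)\in \Omega_t$ and $x(t)\in\Omega_t$ for every $t$, while by the differential inclusion \eqref{defSDP}, $\dot x(t)\in N_{\Omega_t}(x(t))$ and $\dot y(t)\in N_{\Omega_t}(y(t))$ almost everywhere. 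The defining property \eqref{normalcone} of the normal cone then yields
\begin{equation*}
\langle \dot x(t),\,y(t)-x(t)\rangle \leq 0,\qquad \langle \dot y(t),\,x(t)-y(t)\rangle \leq 0.
\end{equation*}
Adding these two inequalities after flipping the sign of the first gives exactly $\langle \dot x(t)-\dot y(t),\,x(t)-y(t)\rangle \geq 0$ a.e., so the squared distance is non-decreasing.

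Since $\max T$ is the maximum of the parameter interval and $x(\max T)=\overline{x}$, $y(\max T)=\overline{y}$, monotonicity gives
\begin{equation*}
|x(t)-y(t)|\leq |x(\max T)-y(\max T)|=|\overline{x}-\overline{y}|\qquad\forall t\in T,
\end{equation*}
which is \eqref{SDPineq}. For the final assertion, if $\overline{x}=\overline{y}$ the right-hand side vanishes, forcing $x(t)=y(t)$ throughout $T$; this establishes uniqueness of an absolutely continuous viable steepest descent curve with prescribed end point.

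The argument is largely routine once one has the correct viewpoint; the only delicate point is to make sure the inner-product inequalities are applied at the correct pair of points, namely that each curve's velocity lies in a normal cone at \emph{its own} position while being tested against the \emph{other} position, which is a legitimate pairing because both positions belong to the same sub-level set $\Omega_t$. No convexity of the individual sub-level sets beyond that is used, and no further regularity hypothesis on $\{\Omega_t\}$ is needed beyond what is assumed in definition \ref{defSEPclassical}.
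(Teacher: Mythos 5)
Your proof is correct and follows essentially the same route as the paper: both arguments pair each velocity $\dot x(t)\in N_{\Om_t}(x(t))$, $\dot y(t)\in N_{\Om_t}(y(t))$ against the other curve's position in the same sub-level set $\Om_t$, add the two normal-cone inequalities to get $\frac{d}{dt}|x(t)-y(t)|^2\ge 0$ a.e., and invoke absolute continuity to conclude monotonicity of the distance up to the common end parameter. No discrepancies worth noting.
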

\begin{proof} Let $t$ be a value for which
$\dot{x}(t)\in N_{\Om_{t}}(x(t)), \, \dot{y}(t)\in
N_{{\Om_{t}}}(y(t)).$ Since  $x(t), y(t) \in \pa_{rel} \Om_{t}\,
$, the previous inclusions mean that
\begin{equation}\label{xdot(x-y)}
 \langle \dot{x}(t), x(t)-y(t)\rangle\geq 0 , \quad
 \langle \dot{y}(t), y(t)-x(t)\rangle \geq 0.
\end{equation}
Adding the two previous inequalities,   a.e. 
$$\frac{1}{2}\frac{d}{dt}|x(t)-y(t)|^2 \geq 0 $$
holds. The absolute continuity assumption implies that the distance between $x(t)$ and $y(t)$ is  not decreasing with respect
 to the level value $t$; this proves \eqref{SDPineq}.
\end{proof}
Next theorem gives us a continuous dependence for $EC$.
 
\begin{theorem}\label{uniqcondEC} Let $\mathfrak{G}=\{\Om_t\}_{t\in T}$ be  a  connected
   quasi convex family and
let $(x_1(\cdot),\mathfrak{G})$,$(x_2(\cdot),\mathfrak{G})$  be two  expanding  couples with joint continuous parametrizations; let
 $\overline{x_1},\, \overline{x_2}$  be the curves' end   points. 
  Then for all $t\in T$
\begin{equation}\label{eqcineq}
|x_1(t)-x_2(t)| \leq |\overline{x_1}-\overline{x_2}|
\end{equation}
and for any given end point there exists at most   one EC.
\end{theorem}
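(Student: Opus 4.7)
The plan is to follow the line of Proposition \ref{uniqcondepSDP}, first reducing both expanding couples to absolutely continuous parametrizations via a \emph{common} Lipschitz reparametrization of the family, and then running the same ``sum-of-two-inner-products'' argument. Let $s_i(t):=\|\gamma_i\cap\Om_t\|$ denote the arc-length of $\gamma_i$ up to $x_i(t)$; by continuity of the joint parametrization and of the length functional on rectifiable curves, $s_i\colon T\to [0,\|\gamma_i\|]$ is continuous and nondecreasing. Set $\tau(t):=t+s_1(t)+s_2(t)$: this is continuous and strictly increasing, hence a homeomorphism of $T$ onto a closed interval with continuous inverse $t(\tau)$. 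Reparametrize by $\tilde\Om_\tau:=\Om_{t(\tau)}$ and $\tilde x_i(\tau):=x_i(t(\tau))$. Each $\tilde x_i$ is $1$-Lipschitz in $\tau$, since for $\tau_1<\tau_2$ the arc-length of $\tilde x_i$ on $[\tau_1,\tau_2]$ equals $s_i(t(\tau_2))-s_i(t(\tau_1))\leq \tau_2-\tau_1$; in particular each $\tilde x_i$ is absolutely continuous, and the EC structure of $(\tilde x_i,\tilde{\mathfrak G})$ is preserved because the EC property is purely geometric.

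Next, I would show that $\dot{\tilde x}_i(\tau)\in N_{\tilde\Om_\tau}(\tilde x_i(\tau))$ for a.e.\ $\tau$. If $\tilde x_i(\tau)\notin\relint \tilde\Om_\tau$, Definition \ref{defEC+}(iii) applied with $Q=\tilde\Om_\tau$ and arbitrary $y\in\tilde\Om_\tau$ implies that $\tau'\mapsto|\tilde x_i(\tau')-y|^2$ is nondecreasing for $\tau'\geq\tau$; differentiating at $\tau$ gives $\langle\dot{\tilde x}_i(\tau),\tilde x_i(\tau)-y\rangle\geq 0$ for every $y\in\tilde\Om_\tau$, which is exactly the normal-cone inclusion. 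If instead $\tilde x_i(\tau)\in\relint\tilde\Om_\tau$, Remark \ref{x(t0)costant} applied to the joint parametrization by $\tau$ gives that $\tilde x_i$ is constant on a neighbourhood of $\tau$, so $\dot{\tilde x}_i(\tau)=0$, which lies trivially in $N_{\tilde\Om_\tau}(\tilde x_i(\tau))$.

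Finally, mirroring the proof of Proposition \ref{uniqcondepSDP}: since $\tilde x_j(\tau)\in\tilde\Om_\tau$ and $\dot{\tilde x}_i(\tau)\in N_{\tilde\Om_\tau}(\tilde x_i(\tau))$, the two inequalities $\langle\dot{\tilde x}_1,\tilde x_1-\tilde x_2\rangle\geq 0$ and $\langle\dot{\tilde x}_2,\tilde x_2-\tilde x_1\rangle\geq 0$ hold a.e., and adding them gives $\tfrac{d}{d\tau}|\tilde x_1(\tau)-\tilde x_2(\tau)|^2\geq 0$. Hence $\tau\mapsto|\tilde x_1(\tau)-\tilde x_2(\tau)|$ is nondecreasing and is therefore bounded by its value at $\tau(\max T)$, which equals $|\overline{x_1}-\overline{x_2}|$. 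Undoing the change of variable yields (\ref{eqcineq}) for every $t\in T$, and uniqueness follows immediately: if $\overline{x_1}=\overline{x_2}$ then $x_1(t)=x_2(t)$ for all $t\in T$. The main obstacle I anticipate is justifying the normal-cone inclusion cleanly at the points where $\tilde x_i$ crosses into $\relint\tilde\Om_\tau$, where Remark \ref{x(t0)costant} must be invoked to handle the ``flat'' intervals of the curve.
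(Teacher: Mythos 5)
Your proposal is correct and follows essentially the same route as the paper: a common reparametrization built from the arc-length functions $s_1,s_2$ that renders both curves Lipschitz, the a.e.\ normal-cone inclusion with Remark \ref{x(t0)costant} handling the points lying in $\relint \Om_\tau$, and then the summed inner-product monotonicity argument of Proposition \ref{uniqcondepSDP}. The only inessential difference is your choice $\tau(t)=t+s_1(t)+s_2(t)$, whereas the paper (Step 1, via Lemma \ref{s(w)}) uses the arc-length parameter of the curve $w\mapsto(w,s_1(w),s_2(w))$; the two devices serve the same purpose.
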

\begin{proof}
 Step 1.  Let $s_i(\cdot):[w_0, \overline{w}]\to \R$ be  continuous not decreasing functions ($i=1,2$).
Then, there exists a real continuous, strictly increasing function $\tau: [w_0,\overline{w}]\to \tau( [w_0,\overline{w}]):=T_1$,  with inverse map 
$w(\cdot)$, so that $s_i(w(\cdot)): T_1 \to \R$ is a lipschitz function ($i=1,2$).

The proof of step 1 is similar to the proof of lemma \ref{s(w)} where
$\eta $ is the three dimensional curve $[w_0,\overline{w}]\ni w \to (w,s_1(w),s_2(w))$ and 
$$\tau(w):=||\eta \cap([w_0,w]\times \R^2)||.$$
Step 2. Let $(\ga_i,\mathfrak{G})$ be two expanding couples ($i=1,2$). Then, there exists 
a joint parametrization $(z_i(\tau), \Om_\tau )_{\tau \in T_1}$ of $(\ga_i,\mathfrak{G})$ ($i=1,2$) so that $z_i(\cdot)$ is 
lipschitz   
continuous in $T_1$ ($i=1,2$).
The proof of step 2 is similar to the proof of theorem \ref{abscont} where 
$$w \to s_i(w)=||\ga_i\cap O_w||, \quad i=1,2$$
and $w(\cdot)$ is introduced in  step 1. 

 The conclusion of the proof of the theorem is similar to that of the proposition \ref{uniqcondepSDP}.  Let us use the joint parametrizations 
 $(z_1(\tau),\Om_\tau)),(z_2(\tau),\Om_\tau))$    
 introduced in the previous step. Let us recall that $\tau \to z_1(\tau), \tau \to z_2(\tau)$ are absolutely continuous in $T_1$. 
 If $z_1(\tau),z_2(\tau) \in \pa_{rel} \Om_\tau$ then \eqref{xdot(x-y)} holds for  such $\tau$. In case $z_1(\tau_0)\in \text{relint\ }\Om_{\tau_0}$ then,
 from
 remark \ref{x(t0)costant}, the  right or the left derivative of $z_1(\cdot)$ at $\tau_0$ is zero. Similarly is for $z_2(\cdot)$. 
 Then, there exists a.e. $\dot z_1(\tau), \dot z_2(\tau)$ and \eqref{xdot(x-y)} holds a.e. 
 Thus a.e. 
 $$\frac{1}{2}\frac{d}{d\tau}|z_1(\tau)-z_2(\tau)|^2 \geq 0. $$
 Therefore, by the absolute continuity property,
   $$|z_1(\tau)-z_2(\tau)| \leq |\overline{x_1}-\overline{x_2}|.$$
 Then  \eqref{eqcineq} holds for every joint parametrization of $(\ga_1,\mathfrak{G}),(\ga_2,\mathfrak{G})$. 
\end{proof}

 \begin{cor}Let $(\ga_i, \mathfrak{G})$ be two EC with $\mathfrak{G}$ a connected quasi convex family ($i=1,2$).
 Let $x_i=\ga_i\cap \max  \mathfrak{G} (i=1,2)$.
Then
 $$\dist (\ga_1,\ga_2)\leq |x_1-x_2|.$$  
 \end{cor}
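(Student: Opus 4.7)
The plan is to reduce the statement to the pointwise estimate \eqref{eqcineq} from Theorem~\ref{uniqcondEC}, via a common joint parametrization of the two expanding couples.

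First, following Steps~1 and~2 in the proof of Theorem~\ref{uniqcondEC}, I would equip both EC with a \emph{common} joint parametrization $(z_i(\tau),\Om_\tau)_{\tau\in T_1}$, $i=1,2$, with $z_i$ continuous (indeed lipschitz) on $T_1$. The key point, which one must check but which follows at once from \eqref{parametrx(t)}, is that $z_i(T_1)=\ga_i$: the map $\tau\mapsto z_i(\tau)=\max\{x\in\ga_i:x\in\Om_\tau\}$ is monotone in the linear order $\prec$ on $\ga_i$, continuous on the connected quasi convex family $\mathfrak{G}$ (by Theorem~\ref{connectedimpliseregular}), and attains the two endpoints of $\ga_i$; hence any intermediate point of $\ga_i$ is hit by some $\tau$.

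Second, I would apply \eqref{eqcineq} to obtain
$$
|z_1(\tau)-z_2(\tau)|\leq |x_1-x_2|\qquad\forall\,\tau\in T_1.
$$
Third, unfolding the definition of Hausdorff distance: for any $p\in\ga_1$, choose $\tau$ with $z_1(\tau)=p$; then $z_2(\tau)\in\ga_2$ and $\dist(p,\ga_2)\leq |p-z_2(\tau)|\leq |x_1-x_2|$. Exchanging the roles of the two curves gives $\sup_{q\in\ga_2}\dist(q,\ga_1)\leq |x_1-x_2|$ by the same argument. Combining the two one-sided bounds yields $\dist(\ga_1,\ga_2)\leq|x_1-x_2|$, as required.

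There is essentially no obstacle here beyond the bookkeeping in the first step: the entire content of the corollary is already packaged in Theorem~\ref{uniqcondEC}, and the only extra observation is that a joint parametrization is surjective on the underlying curve, which is immediate from the definition \eqref{parametrx(t)}.
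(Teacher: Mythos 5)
Your reduction to \eqref{eqcineq} through a common joint parametrization is the natural route (and essentially the one the paper intends), but the step you dismiss as bookkeeping is exactly where the argument breaks. The claim $z_i(T_1)=\ga_i$ is not correct in general: by \eqref{parametrx(t)}, at the initial time one has $z_i(\min T_1)=\max\{x\in\ga_i:\,x\in\min\mathfrak{G}\}$, which is the \emph{last} point of $\ga_i$ lying in the minimal set, not the first point of $\ga_i$. The whole initial arc of $\ga_i$ contained in $\relint\min\mathfrak{G}$ (where condition (iii) of definition \ref{defEC+} imposes nothing beyond the SEP property) is never attained by any $\tau$, so your intermediate-value argument only shows that the image of $z_i$ is $(\ga_i\setminus\relint\min\mathfrak{G})\cup\{z_i(\min T_1)\}$, and your third step only bounds the Hausdorff distance between these truncated arcs. (A minor additional point: continuity of $\tau\mapsto z_i(\tau)$ is not what theorem \ref{connectedimpliseregular} provides; it comes from theorems \ref{ECvsPEC} and \ref{abscont}.)

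This gap cannot be closed by a better argument, because for the full curves the stated bound fails without extra hypotheses. Take $\mathfrak{G}=\{B(0,1+t)\}_{t\in[0,1]}$ in $\RR^2$; let $\ga_1$ be the segment from $(0,-9/10)$ to the origin followed by the segment from the origin to $(2,0)$, and let $\ga_2$ be its mirror image starting at $(0,9/10)$. Each $\ga_i$ is a SEP, and $(\ga_i,\mathfrak{G})$ satisfies (i)--(iii) of definition \ref{defEC+}: the only points of $\ga_i$ not in $\relint B(0,1+t)$ are $x=(b,0)$ with $b\geq 1+t$, and for $x_1=(b',0)$ with $b'>b$ and $|y|\leq 1+t$ one has $|x_1-y|^2-|x-y|^2=(b'-b)(b'+b-2y_1)>0$. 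Both couples have the same end point $(2,0)\in\pa B(0,2)$, yet the Hausdorff distance between $\ga_1$ and $\ga_2$ is at least $9/10$, while $|x_1-x_2|=0$. Hence what your method (and \eqref{eqcineq}) actually yields is the estimate for the images of the joint parametrizations, i.e.\ for the parts of the curves outside $\relint\min\mathfrak{G}$; the surjectivity you call immediate is available only when $\ga_i\cap\min\mathfrak{G}$ reduces to a single point, and without such a hypothesis (or that reinterpretation of $\dist(\ga_1,\ga_2)$) the corollary as stated cannot be reached.
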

 \begin{proof}
  By definition of EC,
  $$|x(t_1)-y(\tau)| \leq |x(t_1)-y(t_1)| , \quad \mbox{for} \quad \tau < t_1 ,$$
  since $y(\tau) \in \Om_\tau \subset \Om_{\tau_1}$. 
 \end{proof}

\section{Appendix}

\begin{lemma}\label{lemmappendix0}
Let $K_{v,\delta}$ be a circular $n$-dimensional cone of axis $v$, amplitude $\delta$
 greater than zero and less than $\pi/2$. Then
$$\int_{\widehat{K_{v,\delta}}}\langle \theta, v \rangle \, d\sigma(\theta) =
 \frac{\om_{n-1}}{n-1}\sin^{n-1}\delta.$$
\end{lemma}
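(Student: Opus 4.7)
The plan is to evaluate the integral directly in spherical coordinates adapted to the axis $v$, reducing it to a one-variable elementary integral.

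First, I would set up coordinates on $S^{n-1}$ based on the polar axis $v$. By rotational invariance of the spherical measure we may assume $|v|=1$. Write any $\theta \in S^{n-1}$ as $\theta = \cos\phi \cdot v + \sin\phi \cdot \eta$, where $\phi \in [0,\pi]$ is the polar angle and $\eta$ ranges over the equator $S^{n-2} \subset v^{\perp}$. In these coordinates the spherical Lebesgue measure factors as $d\sigma(\theta) = \sin^{n-2}\phi \, d\phi \, d\sigma_{n-2}(\eta)$, where $d\sigma_{n-2}$ is the Lebesgue measure on $S^{n-2}$ (of total mass $\omega_{n-1}$ in the paper's convention).

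Next, the circular cone $K_{v,\delta}$ of axis $v$ and opening $\delta$ is precisely the set where the angle to $v$ is at most $\delta$, so the spherical sector is
$$\widehat{K_{v,\delta}} = \{\theta \in S^{n-1} : 0 \leq \phi \leq \delta\}.$$
Moreover $\langle \theta, v\rangle = \cos\phi$. Therefore
$$\int_{\widehat{K_{v,\delta}}} \langle \theta, v\rangle \, d\sigma(\theta) \;=\; \int_{S^{n-2}} d\sigma_{n-2}(\eta) \int_0^\delta \cos\phi \, \sin^{n-2}\phi \, d\phi \;=\; \omega_{n-1} \int_0^\delta \cos\phi \, \sin^{n-2}\phi \, d\phi.$$

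Finally, a direct antiderivative gives $\int_0^\delta \cos\phi \sin^{n-2}\phi \, d\phi = \frac{\sin^{n-1}\delta}{n-1}$, which yields the claimed identity. There is no genuine obstacle: the entire proof is a two-line computation once the adapted spherical coordinates are in place, and the only care needed is to match the paper's normalization convention in which $\omega_{k}$ denotes the measure of $S^{k-1}$, so that the equatorial sphere $S^{n-2}$ contributes the factor $\omega_{n-1}$.
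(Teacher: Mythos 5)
Your computation is correct: the adapted spherical coordinates $\theta=\cos\phi\,v+\sin\phi\,\eta$ with $d\sigma=\sin^{n-2}\phi\,d\phi\,d\sigma_{n-2}(\eta)$, the identification $\widehat{K_{v,\delta}}=\{0\le\phi\le\delta\}$, and the antiderivative $\frac{\sin^{n-1}\phi}{n-1}$ give exactly $\frac{\omega_{n-1}}{n-1}\sin^{n-1}\delta$, with the normalization $\omega_{n-1}=|S^{n-2}|$ handled correctly. The paper itself offers no argument here (it only cites Manselli--Pucci, pp.~223--224), and your two-line direct evaluation is precisely the standard computation that citation stands for, so nothing further is needed.
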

\begin{proof}See e.g. \cite[pp. 223-224]{Manselli-Pucci}.
\end{proof}
 
\begin{lemma}\label{lemmappendix1}
Let $K_{v,\alpha/2}$ be a circular $n$-dimensional cone of axis $v$, amplitude
 $\frac{\alpha}{2}$ greater than zero and less than $\pi/2$. Let $u$ a unit vector
  in $K^*_{v,\alpha/2}$, then
$$\int_{\widehat{K_{v,\alpha/2}}}\langle \theta, u \rangle
\, d\sigma(\theta) \geq \frac{\om_{n-1}}{n-1}\sin^{n}(\alpha/4).$$
\end{lemma}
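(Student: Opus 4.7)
The plan is to reduce the integral to a one-variable integral by using spherical coordinates adapted to the axis $v$, exploiting the rotational symmetry of the sector $\widehat{K_{v,\alpha/2}}$ around $v$. First I would fix an orthonormal basis $e_1,\dots,e_n$ with $e_n=v$ and write the given unit vector $u$ as $u=\cos\beta\,v+\sin\beta\,e_{n-1}$, where $\beta\in[0,\pi/2]$ is the angle between $u$ and $v$. Since $K^*_{v,\alpha/2}$ is, by the earlier remark in the paper, a circular cone of opening $\pi/2-\alpha/2$, the hypothesis $u\in K^*_{v,\alpha/2}$ forces
\[
\beta\le \pi/2-\alpha/2,\qquad\text{equivalently}\qquad \cos\beta\ge \sin(\alpha/2).
\]

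Next I would parametrize $\theta\in S^{n-1}$ by $\theta=\cos\varphi\,v+\sin\varphi\,\eta$ with $\varphi\in[0,\pi]$ and $\eta\in v^\perp\cap S^{n-1}\simeq S^{n-2}$, so that $\widehat{K_{v,\alpha/2}}$ corresponds to $\varphi\in[0,\alpha/2]$ and the surface element becomes $\sin^{n-2}\varphi\,d\varphi\,d\sigma_{n-2}(\eta)$. With this choice
\[
\langle u,\theta\rangle=\cos\beta\cos\varphi+\sin\beta\sin\varphi\,\langle e_{n-1},\eta\rangle,
\]
and integration of $\langle e_{n-1},\eta\rangle$ over $S^{n-2}$ vanishes by antisymmetry. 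Thus
\[
\int_{\widehat{K_{v,\alpha/2}}}\langle u,\theta\rangle\,d\sigma(\theta)=\om_{n-1}\cos\beta\int_0^{\alpha/2}\cos\varphi\sin^{n-2}\varphi\,d\varphi=\frac{\om_{n-1}}{n-1}\cos\beta\,\sin^{n-1}(\alpha/2),
\]
which is the same computation already used for lemma \ref{lemmappendix0} (applied to $w=v$, $\delta=\alpha/2$).

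Finally I would use the bound $\cos\beta\ge \sin(\alpha/2)$ from the dual-cone condition to get
\[
\int_{\widehat{K_{v,\alpha/2}}}\langle u,\theta\rangle\,d\sigma(\theta)\;\ge\;\frac{\om_{n-1}}{n-1}\sin^{n}(\alpha/2),
\]
and conclude using the monotonicity of $\sin$ on $[0,\pi/2]$, since $\alpha/2\in(0,\pi/2)$ implies $\sin(\alpha/2)\ge\sin(\alpha/4)$; this yields the claimed estimate (in fact with a stronger constant $\sin^n(\alpha/2)$ in place of $\sin^n(\alpha/4)$, but the weaker form stated is what is actually needed in the main proof). There is no real obstacle here: the only thing to be careful about is the reduction via rotational symmetry, so that the $\sin\beta$ component contributes nothing; after that the argument is a direct computation plus the elementary inequality $\cos\beta\ge\sin(\alpha/2)$ coming from the definition of $K^*_{v,\alpha/2}$.
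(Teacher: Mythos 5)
Your proof is correct, and it takes a genuinely different route from the paper. The paper does not compute the integral exactly: it bounds $\langle\theta,u\rangle$ from below by $\cos(\phi+\psi)$ (where $\phi,\psi$ are the angles of $\theta$ and $u$ with the axis $v$), then shrinks the domain of integration to the smaller sector $\widehat{K_{v,\alpha/4}}$, on which $\cos(\phi+\psi)\geq\sin(\alpha/4)$, and finally invokes lemma \ref{lemmappendix0} with $\delta=\alpha/4$; this yields exactly the constant $\frac{\om_{n-1}}{n-1}\sin^{n}(\alpha/4)$. You instead exploit the rotational symmetry of the sector to evaluate the integral exactly: writing $\int_{\widehat{K_{v,\alpha/2}}}\theta\,d\sigma = c\,v$ with $c=\frac{\om_{n-1}}{n-1}\sin^{n-1}(\alpha/2)$ (lemma \ref{lemmappendix0} with $\delta=\alpha/2$), the integral equals $\cos\beta\,\frac{\om_{n-1}}{n-1}\sin^{n-1}(\alpha/2)$, and the dual-cone hypothesis gives $\cos\beta\geq\sin(\alpha/2)$, using the paper's remark that the dual of a circular cone of opening $\alpha/2$ is a circular cone of opening $\pi/2-\alpha/2$. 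Your symmetry step (vanishing of the transverse component, which also covers the degenerate cases $\beta=0$ and $n=2$) is sound, so you obtain the stronger bound $\frac{\om_{n-1}}{n-1}\sin^{n}(\alpha/2)$, which implies the stated one by monotonicity of $\sin$ on $[0,\pi/2]$. In short: the paper's argument is a crude pointwise estimate plus a domain restriction, while yours is an exact computation plus the dual-cone angle bound; yours is both a little cleaner and quantitatively sharper, and since the constant only feeds into the Lipschitz bound in theorem \ref{selfsecrettifiable}, either version suffices for the rest of the paper.
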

\begin{proof}
Let $v=e_n$. Let $\phi=\langle \theta, e_n \rangle, \psi=\langle  u,e_n \rangle.$
 For $\theta$ in the sector  $\widehat{K_{v,\alpha/2}}$, $0\leq \phi \leq \alpha/2;$
 and $0\leq \psi \leq \frac{\pi}{2}-\alpha/2$ since $u\in K^*_{e_n,\alpha/2}$.
  So $0\leq \phi+\psi \leq \pi/2$. Moreover the  distance on the sphere $S^{n-1}$
  between $\theta$ and $u$ is less  than the sum of the  distances between $\theta$
  and $v$, $v$ and $u$, i.e
$$\langle \theta, u \rangle \geq \cos (\phi+\psi ).$$
Let us consider the smallest sector $\widehat{K_{v,\alpha/4}}$;  the inequality
$$\int_{\widehat{K_{v,\alpha/2}}}\langle \theta, u \rangle \,
 d\sigma(\theta) \geq\int_{\widehat{K_{v,\alpha/4}}} \cos (\phi+\psi )\, d\sigma(\theta)$$
holds.
For $\theta \in \widehat{K_{v,\alpha/4}}$, the angle $0\leq \phi\leq \alpha/4$,
therefore $0\leq \phi+\psi\leq  \frac{\pi}{2}-\alpha/4$. Thus
$$\cos (\phi+\psi) \geq \cos(\pi/2-\alpha/4)=\sin (\alpha/4).$$
Then
$$\int_{\widehat{K_{e_n,\alpha/4}}} \cos (\phi+\psi )\, d\sigma(\theta)\geq
 \sin (\alpha/4)\int_{\widehat{K_{e_n,\alpha/4}}}\cos \phi \, d\sigma(\theta).$$
From previous lemma the proof is obtained.
\end{proof}

\end{document}